\newtheorem{theorem}{Theorem}[section]
\newtheorem{lem}[theorem]{Lemma}
\newtheorem{proposition}[theorem]{Proposition}
\newtheorem{assumption}[theorem]{Assumption}
\newtheorem{remark}[theorem]{Remark}
\theoremstyle{definition}
\newtheorem*{example*}{Example}
\numberwithin{equation}{section}
\newcommand{\R}{\mathbb{R}}
\newcommand{\abs}[1]{\left\vert#1\right\vert}
\newcommand{\Vrt}[1]{\left\Vert #1 \right\Vert}
\newcommand{\bb}[1]{\mathbb{#1}}
\newcommand{\Reals}{\mathbb{R}}
\newcommand{\Prob}{\mathbb{P}}
\newcommand{\const}{\mathrm{const.\,}}
\begin{document}
\title{Fr\'echet differentiable drift dependence of Perron--Frobenius and Koopman operators for non-deterministic dynamics}

\author{P\'eter Koltai}
\author{Han Cheng Lie}
\author{Martin Plonka\thanks{$\{$peter.koltai, hancheng.lie, martin.plonka$\}$@fu-berlin.de}}
\renewcommand\Affilfont{\small}
\affil[]{Department of Mathematics and Computer Science, Freie Universit\"at Berlin, Arnimallee 6, 14195 Berlin, Germany}

\date{}

\maketitle

\begin{abstract}
We prove the Fr\'{e}chet differentiability with respect to the drift of Perron--Frobenius and Koopman operators associated to time-inhomogeneous ordinary stochastic differential equations. This result relies on a similar differentiability result for pathwise expectations of path functionals of the solution of the stochastic differential equation, which we establish using Girsanov's formula. We demonstrate the significance of our result in the context of dynamical systems and operator theory, by proving continuously differentiable drift dependence of the simple eigen- and singular values and the corresponding eigen- and singular functions of the stochastic Perron--Frobenius and Koopman operators.
\end{abstract}



\section{Introduction}
In the study of dynamical systems and differential equations in particular, one important aspect is the sensitivity of the solution with respect to the data. In the literature, there exist classical results for the deterministic case that deal with dependence on the inital data and the driving velocity field. The same dependencies also arise for non-deterministic systems, e.g.~\cite{flandoli_etal,Andersson2017} for Fr\'{e}chet-type dependencies on the initial data, and \cite{Fournie1999,gobet_munos,monoyios,dieker_gao} for the dependence of path functionals or their expectations with respect to changes in the drift. This paper uses the approach of~\cite{Fournie1999}, which establishes a G\^{a}teaux-type dependence on the data by establishing the existence of directional derivatives with respect to the drift, in order to establish the Fr\'echet-type dependence of the solution operator with respect to an additive change of drift in a sufficiently smooth setting: for a suitable observable $g$, we provide in Theorem~\ref{thm:frechet_expectation} the Fr\'echet derivative at $\gamma =0$ of the non-linear functional
\begin{equation}\label{eq:func-u}
	u_g^{x} (\gamma) := \bb{E} \left[ g (X^{\gamma}) \,\big\vert\, X_0^{\gamma}=x \right] \nonumber
\end{equation}
with respect to additive perturbations in $\gamma$; above, $X^{\gamma}$ denotes the solution of the perturbed stochastic differential equation \eqref{eq:perturbed_sde} below. 

Our main motivation for considering the above result is the study of global long-term properties of dynamical systems, such as their stationary distribution, or the rate of mixing. 
Such properties are strongly related to spectral objects of the so-called \emph{transfer operators} associated to the dynamics~\cite{Pie94,SiEtAl08,SiSpAn09,FRGTSa16,FrSa17}.
These are infinite-dimensional linear operators describing the evolution of distributions and observables under the non-linear, stochastic dynamics. For instance, for a dynamical system given by the mapping~$\Phi:\Reals^d\to \Reals^d$, the \emph{Koopman operator}, acting on observables~$g: \Reals^d\to\Reals$, is given by~$\mathcal{U}g (x) = g\left(\Phi\left( x\right)\right)$.
If the dynamics is non-deterministic, e.g., the value of a stochastic process~$X$ at some time~$t$, then~$\mathcal{U}g (x) = \bb{E}[g(\Phi(x))] = \bb{E}^x[g(X_t)]$. If we now allow for non-deterministic initial conditions~$X_0$ with some distribution~$f$, i.e.,~$X_0 \sim f$, the \emph{Perron--Frobenius operator}~$\mathcal{P}$ is defined by~$\Phi(X_0) \sim \mathcal{P}f$. These two operators are adjoints on the appropriate spaces~\cite{LaMa94}, and allow for far-reaching analysis of the underlying dynamics; such analysis continues to be intensively exploited in applications such as (geophysical) fluid dynamics~\cite{FPET07,DFHPG09,FrLlSa10,FHRvS15,ArMe17}, molecular dynamics~\cite{DDJSch96,SFHD99,prinz2011markov,SchSa13,A19-1,BitEtAl17}, or stability and control~\cite{MaMeMo13, MaMe16, PrBrKu18}. In particular, smoothness or differentiability of spectral objects of the transfer operator might allow for more efficient optimization of fluid mixing processes~\cite{FRGTSa16}.

One of our main results in this paper is the differentiable dependence of these transfer operators on the time-dependent drift function, i.e., on the driving velocity field. As we consider the operators in their induced norm topology on~$L^2$-spaces, we will consider genuinely non-deterministic dynamics. This is because for \textit{deterministic} systems, these spaces are too ``large'' for the transfer operators associated with deterministic systems to depend even continuously on the drift with respect to the induced operator norm, as the following example shows.

\begin{example*}[Discontinuous drift-dependence for deterministic dynamics]
Let us consider the time-$t$ map~$\Phi$ of some deterministic differential equation, and the time-$t$ map~$\tilde{\Phi}$ of a slight perturbation of the previous differential equation. Let $\Phi$ and $\tilde{\Phi}$ be such that the associated Koopman operators~$\mathcal{U}: g\mapsto g\circ\Phi$ and~$\tilde{\mathcal{U}}: g\mapsto g\circ\tilde{\Phi}$ are well-defined on~$L^2(\Lambda)$, where~$\Lambda$ is the Lebesgue measure; cf.~\cite{LaMa94,Wal00} for details. For~$x$ with~$\Phi(x) \neq \tilde{\Phi}(x)$, let us consider the function sequence~$f_n = \Lambda(A_n)^{-1/2} \mathds{1}_{A_n}$, where $\{A_n\}_{n\in\mathbb{N}}$ is a sequence of balls with~$A_n \downarrow \{ \Phi(x) \}$, and~$\mathds{1}_A$ denotes the characteristic function of a set~$A$. Now, it is easy to see that there will be some~$N$ such that~$\| \mathcal{U}f_n - \tilde{\mathcal{U}}f_n \|_{L^2(\Lambda)} = 2$ for all~$n\ge N$, however small the perturbation in the drift is chosen to be. This shows that~$\mathcal{U}$ cannot depend continuously on the drift.
\end{example*}
The prior example implies that continuous differentiability cannot hold in the deterministic case. Combining this observation with duality arguments from Lemma~\ref{lem:chain_rule} and Remark~\ref{rem:chain_rule} shows that the same assertion also holds true for the Perron--Frobenius operator on~$L^2(\Lambda)$. Note that the main reason for non-continuous drift-dependence in the example is that functions with highly localized supports are mapped by $\mathcal{U}$ and~$\mathcal{P}$ to functions with highly localized supports. However, for non-deterministic systems driven by non-degenerate noise, the noise has the effect of spreading the support of the initial conditions, thus enabling a smooth drift-dependence of the associated transfer operators. We also note that, while the prior example rules out smooth drift-dependence of transfer operators associated with deterministic dynamics on~$L^p$-spaces, there could be other spaces where this property is retained~\cite{Bal18}. Such spaces, however, would require norms that ``punish'' increasingly localized densities~\cite{gouezel2006banach,Thi12}. In addition, single trajectories or realizations of both deterministic ordinary- and non-deterministic differential equations can be shown to depend smoothly on the drift, by using the integral form of the differential equation and the implicit function theorem. As this will not be of importance further on, we leave the details to reader.

We would like to remark that smooth dependence of invariant measures---often called \emph{linear response}---has been considered in specific cases. For instance, Butterley and Liverani~\cite{butterley2007smooth} show differentiability of SRB measures corresponding to Anosov flows with respect to one-dimensional parameters, and give an exhaustive report on the work that has been done previously in this field. For a survey on linear response results for deterministic systems (maps), we refer to~\cite{Bal14}. Results for non-deterministic systems arise in the context of random compositions of maps~\cite{BaRuSa17}, or for stochastic ordinary and partial differential equations in the weak topology~\cite{HaMa10}. The latter reference shows G{\^a}teaux-type pointwise differentiability of the transfer operators acting on smooth functions with respect to a real parameter, where also the (constant-in-space) diffusion matrix can vary with the parameter. It belongs to the class of approaches where functional-analytic tools are used to derive quantitative perturbation results for transfer operators and their dominant eigenmodes~\cite{KeLi99,Gal15,Sed18,GaGi19}. Our improvement over these results is that we consider (i) Fr\'echet differentiability in the infinite-dimensional space of velocity fields driving the stochastic differential equations (ii) of the associated transfer operator in its strong norm topology, and thus obtain differentiability of all isolated eigenvalues.

One question that we do not address in this paper is differentiability with respect to the diffusion matrix. In this paper, we rely on Girsanov's formula in order to obtain an expression for the Radon--Nikodym derivative between two path measures; this expression is crucial to our estimates for computing the Fr\'{e}chet derivative. Girsanov's formula applies in the case where the drift term is changed, and is a fundamental result in the theory of stochastic differential equations. To the best of our knowledge, an analogous formula for changes of the diffusion term does not exist. This is because the set of possible changes that one can apply to the diffusion matrix while still obtaining a mutually absolutely continuous path measure is severely constrained. Consider the simple case of a constant zero drift term and identity diffusion matrix. In this case, the path measure is Wiener measure on path space, which is a Gaussian measure. If one scales the identity diffusion matrix by a constant whose absolute value is not 1, then the resulting path measure is not mutually absolutely continuous with respect to Wiener measure \cite[Example 2.7.4]{Bogachev1998}. Thus, no Radon--Nikodym derivative exists, and the approach we follow for changes of the drift cannot be applied for such changes of the diffusion matrix, even in this simple case.

In this paper, we first introduce the setting which we are working in, state our assumptions, and introduce some key auxiliary results in Section~\ref{sec:prereq}. Then, we will extend ideas from~\cite{Fournie1999} to establish Fr\'echet differentiability of pathwise expectations with respect to the drift in Theorem~\ref{thm:frechet_expectation} of Section~\ref{sec:frechet_expectation}. Section~\ref{sec:transfer_ops} uses the quantitative remainder estimates from the preceding section to show our main result: differentiable drift-dependence of the transfer operators associated with non-deterministic dynamics governed by stochastic differential equations, Theorem~\ref{thm:PFO_dif}. This is then applied in Section~\ref{sec:EVD} to carry over the smooth dependence on the drift to isolated eigenvalues and corresponding eigenfunctions of transfer operators in Theorem~\ref{thm:spectrum_dif}. We conclude by pointing out two immediate consequences for dynamical systems. For convenience, we collect some relevant results on stochastic processes and transition kernels in the appendices.


\section{Prerequisites from stochastic analysis}
\label{sec:prereq}

\paragraph{Stochastic differential equations.}

For the rest of this work we are concerned with the following objects. 
We fix $T>0$ and~$\Omega:=C([0,T];\Reals^d)\coloneqq\{ f:[0,T]\to\Reals^d\, \vert\, f \text{ continuous}\}$. Further, let $\mathcal{F}$ be a $\sigma$-algebra on $\Omega$, and $\Prob$ be a probability measure on~$(\Omega,\mathcal{F})$. For a filtration~$(\mathcal{F}_t)_{t\in [0,T]}$ with $\mathcal{F}_t\subseteq \mathcal{F}$ for every $t\in [0,T]$, let~$W=(W_t,\mathcal{F}_t)_{t\in[0,T]}$ be the $d$-dimensional Wiener process with respect to~$\Prob$. We use the notation $W=(W_t,\mathcal{F}_t)_{t\in[0,T]}$ to emphasize that $W_t$ is $\mathcal{F}_t$-measurable for every $t\in [0,T]$. 
We define a measurable functional $h$ acting on $[0,T]\times \Omega$ to be \textit{adapted} to the filtration $(\mathcal{F}_t)_{t\in [0,T]}$ if $h(t,\cdot)$ is $\mathcal{F}_t$-measurable, for every $t\in [0,T]$. Let the \emph{drift}~$b:[0,T]\times \Reals^d\to\Reals^d$ and the \emph{diffusion matrix}~$\sigma:[0,T]\times \Reals^d\to GL(\R,d)$ (the set of real invertible $d \times d$ matrices) be measurable functions such that the following stochastic differential equation (SDE) admits a unique strong solution (cf. Theorem~\ref{thm:theorem4_6_cor}):
\begin{equation}\label{eq:original_sde}
	dX_t=b(t,X_t)dt+\sigma(t,X_t)dW_t,\quad X_0=x.
\end{equation}
Fix a deterministic initial condition~$x\in\Reals^d$. Let $\Prob^x$ denote the conditioning of $\Prob$ to the subset~$\{f:[0,T]\to\Reals^d \,\vert\, f(0)=x\}$ of~$\Omega$.
For simplicity, we shall assume that $b$ and $\sigma$ satisfy the conditions (A1) and (A2) (the Lipschitz continuity~\eqref{eq:A1-Lip} and linear growth conditions~\eqref{eq:A2-Growth}), which we recall below for convenience.
\begin{assumption} \label{ass:coefficient_bounds}~There exist constants $0<L,\lambda_\sigma<\infty$ that do not depend on $y$, $y'$, or $t$, such that 
	\begin{itemize}
		\item[(A1)]	The functions $b$ and $\sigma$ satisfy the Lipschitz conditions
		\begin{equation}\label{eq:A1-Lip}
			\abs{b_i(t,y)-b_i(t,y')}\leq L\abs{y-y'},\quad\abs{\sigma_{ij}(t,y)-\sigma_{ij}(t',y')}\leq L \abs{y-y'}
		\end{equation}
		for all $(t,y,y')\in [0,T]\times \Reals^d\times\Reals^d$ and $1\leq i,j\leq d$.
		\item[(A2)]	The functions $b$ and $\sigma$ satisfy the growth conditions
		\begin{equation}\label{eq:A2-Growth}
			\abs{b_i(t,y)}\leq L(1+\abs{y}),\quad \abs{\sigma_{ij}(t,y)}\leq L(1+\abs{y})
		\end{equation}
		for all $(t,y,y')\in [0,T]\times \Reals^d\times\Reals^d$ and $1\leq i,j\leq d$.
		\item[(A3)]	The function $\sigma$ satisfies
		\begin{equation}\label{eq:uniform_ellipticity}
\lambda_\sigma^{-2}\abs{\xi}^2\leq \xi^\top \sigma\sigma^\top(t,y)\xi, 
		\end{equation}
		for all $(t,y,\xi)\in[0,T]\times\Reals^d\times (\Reals^d\setminus\{0\})$, where $\xi^\top$ denotes the transpose of $\xi$.
		\item[(A4)]It holds that
		\begin{equation}\label{eq:P_bound}
 			\bb{P}^x\left(\int_0^T \left(\abs{\sigma^{-1}_s(b_s+\gamma_s)}^2+\abs{\sigma^{-1}_sb_s}^2\right)\left(X^0_s\right)ds<\infty\right)=1.
		\end{equation}
		(Here and in the following, the subscript `$s$' denotes the time input.)
	\end{itemize}
\end{assumption}
\begin{remark}
	The last condition (A4) can be derived from the other assumptions, even under the more general condition of locally Lipschitz drift and diffusion matrix. However, for the sake of simplicity, in Lemma~\ref{lem:A4} a proof is given for globally Lipschitz drift, change of drift, and diffusion.
\end{remark}

\paragraph{Norms for the drift.}

Let us consider the vector space 
\begin{multline}
	\mathcal{C}\coloneqq\left\{f:[0,T]\times\Reals^d\to\Reals^{n}\; \middle| \;  \exists \, 0<L<\infty\text{ such that }\abs{f_i(t,y)-f_i(t,y')}\leq L\abs{y-y'}\right. \label{eq:class_C}
	\\
	\left.\text{ and } \abs{f_i(t,y)}\leq L(1+\abs{y}),\ \forall (t,y)\in [0,T]\times\bb{R}^d,\ 1\leq i\leq n\right\} \, .
\end{multline}
Above, we allow $n\in\bb{N}$ to vary, so that by using the `stacked' vector representations of matrices, we can consider $\cal{C}$ to contain both vector- and matrix-valued functions whose entries satisfy the Lipschitz continuity and growth conditions. In particular, the drift $b$ and diffusion $\sigma$ introduced earlier belong to $\cal{C}$. The essential idea is that $\cal{C}$ contains functions from $[0,T]\times \bb{R}^d$ to some Euclidean space such that each component of the image is Lipschitz continuous and grows linearly. In particular, if $\gamma \in\mathcal{C}$, then
\begin{equation}\label{eq:perturbed_sde}
	dX^\gamma_t=[b(t,X^\gamma_t)+\gamma(t,X^\gamma_t)]dt+\sigma(t, X^\gamma_t)dW_t,\quad X^\gamma_0=x,
\end{equation}
admits a unique strong solution $X^\gamma$, by Theorem~\ref{thm:theorem4_6_cor}. Using this notation, we have that $X^0$ denotes the unique strong solution of the unperturbed SDE~\eqref{eq:original_sde}.

\textbf{Stochastics notation}. Let $\bb{E}^{x}$ denote the expectation operator with respect to $\Prob^x$, and let $\mu^{\gamma,x}\coloneqq \Prob^x\circ (X^\gamma)^{-1}$ denote the \textit{law of the solution} of~\eqref{eq:perturbed_sde}. Using this notation, it follows that~$\mu^{0,x}$ is the law of the solution of the unperturbed SDE~\eqref{eq:original_sde}. We shall write $\bb{E}^{\gamma,x}$ ($\bb{E}^{0,x}$) to denote the expectation with respect to $\mu^{\gamma,x}$ ($\mu^{0,x}$). Thus, if $g:\Omega\to\bb{R}$ is a path functional, the shorthand~$\bb{E}^{0,x}\left[ g \right]$ denotes~$\bb{E}^x\left[ g\left(X^0\right) \right]$.

We shall say that $f:[0,T]\times\Reals^d\to\Reals^d$ is $\mu^{0,x}$-\emph{almost surely bounded} if there exists some $0<C<\infty$ such that
\begin{equation}\label{eq:almost_surely_bounded_changes_of_drift}
	\bb{P}^x\left(\abs{f(s,X^0_s)}\leq C,\ \forall s\in [0,T]\right)=1
\end{equation}
holds. Utilizing this notion we set 
\begin{equation*}
	L^\infty(\mu^{0,x})\coloneqq \{f:[0,T]\times\Reals^d\to\Reals^d \,\big\vert\, f \text{ satisfies \eqref{eq:almost_surely_bounded_changes_of_drift} for some $0<C<\infty$}\}.
\end{equation*}
For every $f\in L^\infty(\mu^{0,x})$, we shall write 
\begin{equation*}
	\Vrt{f}_{L^\infty(\mu^{0,x})}\coloneqq\min\left\{C>0\, \middle\vert\,  \text{\eqref{eq:almost_surely_bounded_changes_of_drift} holds}\right\} \, .
\end{equation*}
We shall omit the dependence on the parameter $\mu^{0,x}$ and simply write $\Vrt{f}_{L^\infty}$ below. Note that $(L^{\infty}(\mu^{0,x}),\| \cdot \|_{L^{\infty}})$ is a normed vector space. 

Define the function class~$ V:= \mathcal{C}\cap L^\infty(\mu^{0,x})$, and equip $V$ with the following norm:
\begin{multline}
	\Vrt{f}_V\coloneqq\min\left\{C>0\ \right\vert\ \abs{f_i(t,y)}\leq C, \ \abs{f_i(t,y)-f_i(t,y')}\leq C\abs{y-y'},
 \\
 \left.\ \text{for all } 1\leq i\leq n,\ \text{for all } (t,y,y')\in[0,T]\times\bb{R}^d\times\bb{R}^d\right\}.\label{eq:norm_on_class_V}
\end{multline}
In Lemma~\ref{app:C-vec}, we show that $(V,\Vrt{\cdot}_V)$ is a Banach space. 

\paragraph{Key auxiliary results.}

Now, let~$x\in\Reals^d$ be arbitrary, and let us consider the map $u_g^x: V \to\Reals$, for a suitable observable~$g$ specified below in Theorem~\ref{thm:frechet_expectation} (C2), defined by
\begin{equation}\label{eq:expectation_of_path_functional_as_function_of_gamma}
	u_g^x(\gamma)\coloneqq\bb{E}^{x} \left[g(X^\gamma)\right].
\end{equation}
We wish to establish that $u_g^x$ is a Fr\'{e}chet differentiable map from $(V,\Vrt{\cdot}_V)$ to $\left( \R , |\cdot | \right)$. From now on, we will consider $\gamma \in V$.

First, we recall a result from~\cite{liptsershiryaevpartI} which will be crucial for the following steps.
Note that in the result below, we allow for the Wiener process $W$ to be of different dimension than the diffusion processes $\xi$ and $\eta$. The result below is adapted from~\cite[Section~7.6.4]{liptsershiryaevpartI}.
\begin{remark} \label{rem:reference_measure}
The main purpose of Theorem~\ref{thm:girsanovs_theorem} is that it allows us to express probabilistic objects---like expectation values---involving the process~$X^{\gamma}$ with respect to the law of the unperturbed process~$X^0$. This will be key to establishing the differentiability of~\eqref{eq:expectation_of_path_functional_as_function_of_gamma} with respect to~$\gamma$.
\end{remark}

\begin{theorem}\label{thm:girsanovs_theorem}
	Let $\bb{X}\subset \bb{R}^n$, and let $\xi=(\xi_t,\mathcal{F}_t)_{t\in[0,T]}$ and $\eta=(\eta_t,\mathcal{F}_t)_{t\in[0,T]}$ be $\bb{X}$-valued processes that satisfy
	\begin{align}
		d\xi_t&=\tilde{b}_t(\xi)dt+\sigma_t(\xi)dW_t,\quad \xi_0=\eta_0,
  		\label{eq:7_129}
  		\\
  		d\eta_t&=b_t(\eta)dt+\sigma_t(\eta)dW_t,
  		\label{eq:7_130}
	\end{align}
	where $W=(W_t,\mathcal{F}_t)_{t\in[0,T]}$ is a $k$-dimensional Wiener process, $\eta_0$ is a $\mathcal{F}_0$-measurable random variable with $\mathbb{P}(\sum_{i=1}^{n}\abs{(\eta_0)_i}<\infty)=1$, $b$ and $\tilde{b}$ are measurable functions from $[0,T]\times \mathbb{X}$ to $\Reals^n$, and $\sigma$ is a measurable function from $[0,T]\times\mathbb{X}$ to $\Reals^{n\times k}$, such that the following assumptions are fulfilled:
\begin{enumerate}
	\item[(B1)]\label{girsanov_theorem_cond_I}  The system of algebraic equations
	\begin{equation*}
		\sigma_t(y)\alpha_t(y)=\tilde{b}_t(y)-b_t(y)
	\end{equation*}
	has a solution $\alpha_t$ for all $(t,y)\in [0,T]\times \mathbb{X} $.
	\item[(B2)] \label{girsanov_theorem_cond_II} The components of the functionals $(b_t(\cdot))_{t\in[0,T]}$, $(\tilde{b}_t(\cdot))_{t\in[0,T]}$, and $(\sigma_t(\cdot))_{t\in[0,T]}$ are adapted to the filtration $(\mathcal{F}_t)_{t\in [0,T]}$ and satisfy (A1) and (A2), so that both \eqref{eq:7_129} and \eqref{eq:7_130} have unique strong solutions $\xi$ and $\eta$ respectively.
	\item[(B3)] \label{girsanov_theorem_cond_III} It holds that 
	\begin{equation}\label{eq:7_137}
		\Prob\left(\int_0^T\left( \tilde{b}^\top_t(\sigma_t\sigma_t^\top)^+\tilde{b}_t+b^\top_t(\sigma_t\sigma_t^\top)^+b_t\right)(\xi)dt<\infty\right)=1, \nonumber
	\end{equation}
	where $(\sigma_t\sigma_t^\top)^+$ denotes the pseudoinverse of the $n\times n$ matrix $\sigma_t \sigma_t^\top$.  
\end{enumerate}
Then, the law $\mu_\xi$ of $\xi$ is absolutely continuous with respect to the law $\mu_\eta$ of $\eta$, and
	\begin{equation}\label{eq:7_138}
		\left.\frac{d\mu_\xi}{d\mu_\eta}\right\vert_{\mathcal{F}_t}(\eta)=\exp\left(\int_0^t (\tilde{b}_s-b_s)^\top (\sigma_s\sigma_s^\top)^+(\eta)d\eta_s-\frac{1}{2}\int_0^t (\tilde{b}_s-b_s)^\top (\sigma_s\sigma_s^\top)^+(\tilde{b}_s-b_s)(\eta)ds\right).
	\end{equation}
\end{theorem}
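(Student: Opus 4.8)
\emph{Proof idea.} This is Girsanov's theorem for diffusion-type processes in the form of \cite[Section~7.6.4]{liptsershiryaevpartI}, so the quickest route is simply to cite that reference; for completeness I sketch the structure of a self-contained argument, which combines the classical Girsanov change-of-measure theorem with the weak uniqueness of the solution of \eqref{eq:7_129}. Using (B1), I would fix the measurable selection $\alpha_t(y):=\sigma_t(y)^{+}\big(\tilde b_t(y)-b_t(y)\big)$, with $\sigma^{+}$ the Moore--Penrose pseudoinverse, so that $\sigma_t\alpha_t=\sigma_t\sigma_t^{+}(\tilde b_t-b_t)=\tilde b_t-b_t$ and, by the identity $(\sigma^{+})^{\top}\sigma^{+}=(\sigma\sigma^{\top})^{+}$, $|\alpha_t(y)|^{2}=(\tilde b_t-b_t)^{\top}(\sigma_t\sigma_t^{\top})^{+}(\tilde b_t-b_t)(y)$. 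By (B3) this is $\Prob$-a.s.\ integrable in $t$ along $\eta$, so the stochastic exponential $\mathcal{E}_t:=\exp\!\big(\int_0^{t}\alpha_s(\eta)^{\top}\,dW_s-\tfrac12\int_0^{t}|\alpha_s(\eta)|^{2}\,ds\big)$ is a well-defined continuous nonnegative local $\Prob$-martingale.

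The key step is to show that $\mathcal{E}$ is a true martingale, i.e.\ $\bb{E}\,\mathcal{E}_T=1$, so that $d\tilde{\Prob}:=\mathcal{E}_T\,d\Prob$ defines a probability measure on $(\Omega,\mathcal{F}_T)$ with $d\tilde{\Prob}/d\Prob|_{\mathcal{F}_t}=\mathcal{E}_t$. Since Novikov's criterion is unavailable under the mere linear-growth bound (A2), I would argue by localization: truncate the integrand at level $N$, apply the classical Girsanov theorem to each truncated exponential, and pass to the limit using the linear growth of $\tilde b,b,\sigma$ and Gronwall-type moment bounds for the solution of \eqref{eq:7_129} --- this is precisely the absolute-continuity machinery for It\^o processes of \cite[\S7.6]{liptsershiryaevpartI}. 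Granting it, $\tilde W_t:=W_t-\int_0^{t}\alpha_s(\eta)\,ds$ is a $\tilde{\Prob}$-Wiener process, and inserting $dW_t=d\tilde W_t+\alpha_t(\eta)\,dt$ into \eqref{eq:7_130} yields $d\eta_t=\big(b_t(\eta)+\sigma_t(\eta)\alpha_t(\eta)\big)\,dt+\sigma_t(\eta)\,d\tilde W_t=\tilde b_t(\eta)\,dt+\sigma_t(\eta)\,d\tilde W_t$, with the same initial condition $\eta_0$.

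Thus, under $\tilde{\Prob}$ the process $\eta$ solves the same equation \eqref{eq:7_129} that $\xi$ solves under $\Prob$, with the same law of the initial datum; by (B2) and Theorem~\ref{thm:theorem4_6_cor} that equation has a unique strong solution, hence a unique law, so $\mathrm{Law}_{\tilde{\Prob}}(\eta)=\mathrm{Law}_{\Prob}(\xi)=\mu_\xi$. Therefore $\int F\,d\mu_\xi=\bb{E}_{\tilde{\Prob}}[F(\eta)]=\bb{E}_{\Prob}[F(\eta)\,\mathcal{E}_T]$ for every bounded measurable $F$ on path space, which gives $\mu_\xi\ll\mu_\eta$ once $\mathcal{E}_T$ is rewritten as a functional of the path $\eta$. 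For the latter I would use $\sigma_s(\eta)\,dW_s=d\eta_s-b_s(\eta)\,ds$ together with $\alpha_s(\eta)\in\mathrm{range}(\sigma_s(\eta)^{\top})$ and $(\sigma^{+})^{\top}\sigma^{+}=(\sigma\sigma^{\top})^{+}$, which give $\alpha_s(\eta)^{\top}\,dW_s=(\tilde b_s-b_s)^{\top}(\sigma_s\sigma_s^{\top})^{+}(\eta)\big(d\eta_s-b_s(\eta)\,ds\big)$; substituting this and the above formula for $|\alpha_s(\eta)|^{2}$ into $\log\mathcal{E}_t$ and collecting the time integrals then gives the density in the stated form \eqref{eq:7_138}.

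The only genuinely non-routine point is the martingale property of $\mathcal{E}$ in the second step: under the standing Lipschitz/linear-growth hypotheses and (B3) --- rather than a Novikov-type exponential-integrability condition --- establishing $\bb{E}\,\mathcal{E}_T=1$ requires the It\^o-process absolute-continuity arguments of Liptser--Shiryaev, which is the reason we import the statement from \cite[Section~7.6.4]{liptsershiryaevpartI} rather than reproving it here.
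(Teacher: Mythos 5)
The paper does not prove this theorem; it is imported directly from \cite[Section~7.6.4]{liptsershiryaevpartI}, and citing that reference is also your primary route, so the two approaches coincide. Your supplementary sketch of the classical Girsanov-plus-weak-uniqueness argument is structurally sound (the pseudoinverse selection $\alpha_t=\sigma_t^+(\tilde b_t-b_t)$, the identity $(\sigma^+)^\top\sigma^+=(\sigma\sigma^\top)^+$, and the identification $\mathrm{Law}_{\tilde\Prob}(\eta)=\mu_\xi$ via weak uniqueness are all correct), but note one slip: (B3) asserts integrability of $|\alpha_t|^2=(\tilde b_t-b_t)^\top(\sigma_t\sigma_t^\top)^+(\tilde b_t-b_t)$ along $\xi$, not along $\eta$ as you claim, whereas the exponential local martingale $\mathcal{E}_t$ is defined along $\eta$ and so requires the analogous integrability along $\eta$ --- in the paper's application this is supplied separately by (A4) (cf.~Remark~\ref{rem:B_conditions} and Lemma~\ref{lem:A4}), and the passage between integrability along $\xi$ and along $\eta$ in the general case is part of the Liptser--Shiryaev localization machinery to which you correctly defer the martingale-property step.
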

\begin{remark} \label{rem:B_conditions}
Following the steps of Lemma~\ref{lem:A4} one can prove that (B3) can be derived from (B2) and~(A3). Further, since in our case $b_t(X) = b(t,X_t)$, it follows that $(b_t)_{t\in [0,T]}$ is immediately adapted to $(\mathcal{F}_t)_{t\in [0,T]}$, and thus Assumption~\ref{ass:coefficient_bounds} implies~(B2); the same applies for $\tilde{b}_t := b_t + \gamma_t$ and~$\sigma$. Finally, as we assume $\sigma(t,x)$ to be invertible for all~$(t,x)$, (B1) is satisfied as well.
\end{remark}
From Girsanov's formula \eqref{eq:7_138} and its specification \eqref{eq:Z_gamma_t2} to our case below, it is known that the term inside the exponential is described by a continuous semimartingale
\begin{equation}\label{eq:M_gamma}
	M^\gamma_t\left(X^0\right)\coloneqq \int_0^t\gamma^\top(\sigma\sigma^\top)^{-1}(s,X^0_s)dX^0_s,\quad 
\end{equation}
and the associated quadratic variation process
\begin{equation}\label{eq:M_gamma_q}
	\langle M^\gamma\rangle_t\left(X^0\right)\coloneqq \int_0^t\abs{\sigma^{-1} \gamma(s,X^0_s)}^2ds \,.
\end{equation}
For an easier readability of the proof of our main result below, we first state two technical lemmas. We defer the proofs to the Appendix~\ref{app:proof-lem2}.

\begin{lem}\label{lem:Z_L2}
	The exponential martingale $(Z^{\gamma}_t(X^0))_{t\in [0,T]}$ described by
	\begin{align}
		Z^\gamma_t\left(X^0\right) & \coloneqq\left.\frac{d\mu^{\gamma ,x}}{d\mu^{0,x}}\right\vert_{\mathcal{F}_t}\left(X^0\right)= \exp \left(M_t^{\gamma} (X^0) - \frac{1}{2} \langle M^{\gamma} \rangle_t (X^0) \right) \nonumber \\
		&=\exp\left(\int_0^t\gamma^\top(\sigma\sigma^\top)^{-1}(s,X^0_s)dX^0_s-\frac{1}{2}\int_0^t\abs{\sigma^{-1} \gamma(s,X^0_s)}^2ds \right)\label{eq:Z_gamma_t2}
	\end{align}
	is square integrable with respect to $\mathbb{P}^x$.
\end{lem}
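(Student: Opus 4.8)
The plan is to show that $\bb{E}^x\big[(Z^\gamma_t(X^0))^2\big]$ is finite for each $t\in[0,T]$ by rewriting the square of the exponential martingale as a genuine exponential (Dol\'eans--Dade) martingale of a rescaled semimartingale times a deterministic-looking correction term, then controlling that correction term using the boundedness of $\gamma$ and the uniform ellipticity of $\sigma$.

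First I would compute directly from \eqref{eq:Z_gamma_t2}. Writing $a:=(\sigma\sigma^\top)^{-1}$, we have $Z^\gamma_t = \exp(M^\gamma_t - \tfrac12\langle M^\gamma\rangle_t)$, so
\begin{equation*}
	(Z^\gamma_t)^2 = \exp\big(2M^\gamma_t - \langle M^\gamma\rangle_t\big) = \exp\big(2M^\gamma_t - 2\langle M^\gamma\rangle_t\big)\exp\big(\langle M^\gamma\rangle_t\big).
\end{equation*}
The first factor is $\mathcal{E}(2M^\gamma)_t$, the stochastic exponential of the continuous local martingale $2M^\gamma$, since $\langle 2M^\gamma\rangle_t = 4\langle M^\gamma\rangle_t$ and hence $\exp(2M^\gamma_t - \tfrac12\langle 2M^\gamma\rangle_t) = \exp(2M^\gamma_t - 2\langle M^\gamma\rangle_t)$. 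The key point is that $2M^\gamma$ is itself (up to notation) an object of the same type as $M^\gamma$: indeed $2M^\gamma_t(X^0) = \int_0^t (2\gamma)^\top a(s,X^0_s)\,dX^0_s = M^{2\gamma}_t(X^0)$, and $2\gamma\in V$ whenever $\gamma\in V$. So by (the argument behind) Lemma~\ref{lem:Z_L2} applied with $2\gamma$ in place of $\gamma$ — or more elementarily, by verifying Novikov's condition for $2M^\gamma$ — the process $\mathcal{E}(2M^\gamma)$ is a true $\Prob^x$-martingale with expectation $1$. The second factor $\exp(\langle M^\gamma\rangle_t)$ we bound pathwise: by \eqref{eq:M_gamma_q}, the definition of $\Vrt{\cdot}_V$, and uniform ellipticity \eqref{eq:uniform_ellipticity},
\begin{equation*}
	\langle M^\gamma\rangle_t(X^0) = \int_0^t \abs{\sigma^{-1}\gamma(s,X^0_s)}^2\,ds \le \lambda_\sigma^2 \int_0^t \abs{\gamma(s,X^0_s)}^2\,ds \le \lambda_\sigma^2\, d\, \Vrt{\gamma}_V^2\, T =: K < \infty,
\end{equation*}
a deterministic constant. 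Hence $(Z^\gamma_t)^2 \le e^{K}\,\mathcal{E}(2M^\gamma)_t$ almost surely, and taking $\bb{E}^x$ gives $\bb{E}^x[(Z^\gamma_t)^2] \le e^{K} < \infty$, which is the claim.

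Actually this shows something a bit cleaner: one does not even need the martingale property of $\mathcal{E}(2M^\gamma)$, only that $\bb{E}^x[\mathcal{E}(2M^\gamma)_t]\le 1$, which holds for any nonnegative supermartingale started at $1$, and $\mathcal{E}(2M^\gamma)$ is always a nonnegative local martingale hence a supermartingale. So the genuinely load-bearing facts are just: (i) the algebraic identity $(Z^\gamma_t)^2 = \mathcal{E}(2M^\gamma)_t\,\exp(\langle M^\gamma\rangle_t)$; (ii) the pathwise bound on $\langle M^\gamma\rangle_t$ coming from $\gamma\in V$ and (A3); and (iii) the supermartingale property of stochastic exponentials. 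The main obstacle — really the only place care is needed — is making sure that $\langle M^\gamma\rangle_t$ is genuinely bounded by a deterministic constant rather than merely a.s.\ finite; this is where the hypothesis $\gamma\in V = \mathcal{C}\cap L^\infty(\mu^{0,x})$ (as opposed to just $\gamma\in\mathcal{C}$) enters, via $\Vrt{\gamma}_V<\infty$, and it is exactly what rules out needing Novikov-type exponential-moment gymnastics. If one preferred to avoid even citing the supermartingale property, the pathwise bound on $\langle M^\gamma\rangle$ also makes Novikov's criterion $\bb{E}^x[\exp(\tfrac12\langle 2M^\gamma\rangle_T)]\le e^{2K}<\infty$ trivially verified, upgrading $\mathcal{E}(2M^\gamma)$ to a true martingale; either route closes the proof.
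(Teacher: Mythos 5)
Your proof is correct, and it takes a genuinely different route from the paper. The paper verifies square integrability by invoking Kazamaki's criterion (Theorem~\ref{thm:kazamakis_theorem}): it first shows the a.s.\ bound on $\langle M^\gamma\rangle_t$, then uses $\bb{E}^{0,x}[Z^\gamma_t]=1$ together with Cauchy--Schwarz to control $\bb{E}^{0,x}\left[\exp\left(\tfrac{\sqrt{p'}}{2(\sqrt{p'}-1)}M^\gamma_t\right)\right]$, and finally applies the Kazamaki theorem with $p=p'=2$ to conclude $L^2$-boundedness. You instead factor $(Z^\gamma_t)^2 = \mathcal{E}(2M^\gamma)_t\,\exp(\langle M^\gamma\rangle_t)$, bound the second factor pathwise by a deterministic constant using the same a.s.\ quadratic-variation bound, and then only need the elementary fact that a Dol\'eans--Dade exponential of a continuous local martingale started at $1$ is a nonnegative supermartingale. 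Your approach is shorter and avoids the black box of Kazamaki's theorem; it also produces an explicit quantitative bound $\bb{E}^x[(Z^\gamma_t)^2]\le e^K$, whereas the paper's argument only gives qualitative finiteness. Your closing remark that the deterministic pathwise bound on $\langle M^\gamma\rangle$ makes Novikov's condition for $2M^\gamma$ trivial is a clean way to see why none of the more delicate criteria are really needed here. One small remark: both your proof and the paper's implicitly treat $M^\gamma$ as a continuous local martingale under $\bb{P}^x$, which is what one gets once the It\^o integral in~\eqref{eq:M_gamma} is understood against the martingale part of $dX^0$, as is consistent with $Z^\gamma_t=\exp(M^\gamma_t-\tfrac12\langle M^\gamma\rangle_t)$ being a Radon--Nikodym density; your argument and the paper's both rely on this, so there is no gap relative to the paper, but it is worth keeping in mind.
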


\begin{remark}
	Note that we choose \eqref{eq:original_sde} to correspond to \eqref{eq:7_130} and \eqref{eq:perturbed_sde} to correspond to \eqref{eq:7_129}, so that $\gamma$, $\mu^{0,x}$, and $\mu^{\gamma,x}$ in \eqref{eq:Z_gamma_t2} correspond to $\tilde{b}-b$, $\mu_\eta$, and $\mu_\xi$ in~\eqref{eq:7_138}, respectively.
\end{remark}
The following result will allow us to bound the series expansion of $Z^{\gamma}_t$ from above and pass to the limit $\gamma \rightarrow 0$ for the differentiability result.
\begin{lem}\label{lem:tech2}
	Under Assumption \ref{ass:coefficient_bounds} with the definitions~\eqref{eq:M_gamma} and~\eqref{eq:M_gamma_q} above, the following estimate holds:
	\begin{equation}
		\bb{E}^{0,x} \left[ \left( \sum_{n=2}^{\infty} \frac{1}{n!} \left( |M^{\gamma}_t| + \frac{1}{2} \langle M^{\gamma} \rangle_t \right)^n \right)^2 \right] \leq 2C \big(\exp \left( \lambda_\sigma \sqrt{t} \| \gamma \|_{L^{\infty}} \right)-1 -2 \lambda_\sigma\sqrt{t} \| \gamma \|_{L^{\infty}}\big)^2 \, \nonumber
	\end{equation}
	for $0\le t\le T$.
\end{lem}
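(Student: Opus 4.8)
The plan is to expand the square and exploit linearity of the expectation, reducing everything to moment bounds for $|M^\gamma_t|$ and $\langle M^\gamma\rangle_t$. First I would set $R^\gamma_t := |M^\gamma_t| + \tfrac12\langle M^\gamma\rangle_t$, so that the quantity to be bounded is $\bb{E}^{0,x}\big[\big(\sum_{n\ge 2}\tfrac1{n!}(R^\gamma_t)^n\big)^2\big]$. Since $R^\gamma_t\ge 0$, the inner series equals $\exp(R^\gamma_t) - 1 - R^\gamma_t$, and by the elementary inequality $(\exp r - 1 - r)^2 \le (\exp r - 1 - r)\cdot(\exp r - 1)\le \ldots$ — more straightforwardly, I would use $(\exp r - 1 - r)\le \tfrac12 r^2\exp r$ for $r\ge 0$, giving $\big(\sum_{n\ge2}\tfrac1{n!}(R^\gamma_t)^n\big)^2 \le \tfrac14 (R^\gamma_t)^4\exp(2R^\gamma_t)$. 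An alternative, which I would actually prefer so that the final bound matches the stated right-hand side exactly, is to keep the series and bound it termwise after applying Cauchy–Schwarz inside the expectation.

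Concretely, the cleaner route: apply the triangle inequality in $L^2(\mu^{0,x})$ to the sum,
\begin{equation*}
	\left\Vert \sum_{n=2}^\infty \tfrac1{n!}(R^\gamma_t)^n \right\Vert_{L^2(\mu^{0,x})} \le \sum_{n=2}^\infty \tfrac1{n!}\left\Vert (R^\gamma_t)^n\right\Vert_{L^2(\mu^{0,x})} = \sum_{n=2}^\infty \tfrac1{n!}\left(\bb{E}^{0,x}\big[(R^\gamma_t)^{2n}\big]\right)^{1/2}.
\end{equation*}
Then I would estimate $\bb{E}^{0,x}[(R^\gamma_t)^{2n}]$. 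Here $R^\gamma_t \le |M^\gamma_t| + \tfrac12\langle M^\gamma\rangle_t$, and by (A3)/uniform ellipticity together with $\gamma\in V$ one has the deterministic bound $\langle M^\gamma\rangle_t = \int_0^t |\sigma^{-1}\gamma(s,X^0_s)|^2\,ds \le \lambda_\sigma^2 t\,\|\gamma\|_{L^\infty}^2$ pointwise. For the martingale part $M^\gamma_t$, the Burkholder–Davis–Gundy inequality gives $\bb{E}^{0,x}[|M^\gamma_t|^{2n}] \le C_n\,\bb{E}^{0,x}[\langle M^\gamma\rangle_t^{n}] \le C_n(\lambda_\sigma^2 t\,\|\gamma\|_{L^\infty}^2)^n$, where the BDG constant $C_n$ grows like $(cn)^n$; dividing by $n!$ and using $n!\ge (n/e)^n$ keeps the series convergent, and after summing one recognizes a shifted exponential series in the variable $\lambda_\sigma\sqrt{t}\,\|\gamma\|_{L^\infty}$, producing exactly $\exp(\lambda_\sigma\sqrt t\|\gamma\|_{L^\infty}) - 1 - 2\lambda_\sigma\sqrt t\|\gamma\|_{L^\infty}$ up to the absolute constant $2C$, after absorbing the factor-of-$2$ from $|M^\gamma_t|+\tfrac12\langle M^\gamma\rangle_t \le 2\max(\ldots)$ and the square. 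The role of the constant $C$ is to swallow the BDG constants uniformly in $n$.

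The main obstacle I expect is \emph{tracking the combinatorial constants so that the final bound has precisely the claimed form} — i.e., that the doubly-indexed sum (the square of a sum, or equivalently $\bb{E}[(\sum)^2]$ with cross terms) collapses into the square of a single shifted exponential, with the factor $2$ multiplying $\lambda_\sigma\sqrt t\|\gamma\|_{L^\infty}$ rather than $1$. The cleanest way to handle this is to bound $R^\gamma_t$ by twice the maximum of its two summands, use monotonicity of $r\mapsto \exp r - 1 - r$, and then handle the martingale moments via BDG with explicit constants; the deterministic summand contributes trivially since it is bounded pointwise. A secondary technical point is confirming that $(Z^\gamma_t)$ and hence all these series are genuinely in $L^2(\mu^{0,x})$ so that the manipulations (Fubini, term-by-term integration, $L^2$-triangle inequality) are justified — but this is exactly what Lemma~\ref{lem:Z_L2} provides, so it can be invoked directly.
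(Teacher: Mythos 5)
Your high-level strategy (pull the $\gamma$-dependence out via the deterministic pointwise bound on $\langle M^{\gamma}\rangle_t$, then estimate the martingale moments) is the right shape, but the proposed route through Minkowski's inequality plus BDG cannot produce the exponential form claimed in the lemma, and the place where you wave that away is exactly where it fails. After Minkowski and BDG, the $n$-th term of your series carries a factor $\sqrt{C_{2n}}/n!$, and as you yourself note $C_{2n}$ grows like $(cn)^n$, so $\sqrt{C_{2n}}\sim (cn)^{n/2}$. There is no absolute constant that ``swallows the BDG constants uniformly in $n$'': $(cn)^{n/2}$ is unbounded, so the series $\sum_n \tfrac{(cn)^{n/2}}{n!}(2x)^n$ is \emph{not} a multiple of $\sum_n\tfrac{(2x)^n}{n!}$, and the ``one recognizes a shifted exponential series'' step is unjustified. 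The series still converges (because $(cn)^{n/2}/n!\to 0$), but you would be left with a bound of a different, messier form, not $\const\cdot\big(\exp(\cdot)-1-2(\cdot)\big)^2$.

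The paper's proof sidesteps this issue by expanding the square as a double sum via the binomial theorem, splitting the cross-terms $|M^{\gamma}_t|^{\ell-m}\langle M^{\gamma}\rangle_t^m$ with Young's inequality, and then applying a Doob-type maximal inequality (citing Peskir 1996) of the form $\bb{E}\big[\sup_{s\le T}|M_s|^r\big]\le \big(\tfrac{r}{r-1}\big)^r\bb{E}\big[\langle M\rangle_T^{r/2}\big]$. The crucial feature is that $\big(\tfrac{r}{r-1}\big)^r$ is \emph{decreasing} in $r$ and hence uniformly bounded, so a single constant can be pulled outside all $\ell$ and the double sum collapses cleanly to $\big(\sum_{n\ge2}\tfrac{y^n}{n!}\big)^2$ with $y\propto\lambda_\sigma\sqrt{t}\|\gamma\|_{L^\infty}$. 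That uniformly bounded moment inequality is the missing ingredient in your argument; without it, either you must keep track of $n$-dependent constants that destroy the exponential form, or you must replace BDG with an inequality whose constants do not grow in the order. A secondary point you would also have to confront, as the paper does, is the mismatch in powers of $\lambda_\sigma\sqrt{t}\|\gamma\|_{L^\infty}$ coming from the martingale part (exponent $n$) versus the bracket part (exponent $2n$); the paper handles this by imposing $\|\gamma\|_{L^\infty}\le(\lambda_\sigma\sqrt{t})^{-1}$ so that $(1+\lambda_\sigma\sqrt{t}\|\gamma\|_{L^\infty})^\ell\le 2^\ell$.
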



\section{Fr\'{e}chet differentiability of expected path functionals}
\label{sec:frechet_expectation}

In order to prove the Fr\'echet differentiability of the non-linear function $u_g^x(\gamma) = \bb{E}^x\left[ g\left(X^\gamma\right) \right]$ with respect to~$\gamma$ at~$\gamma = 0$ we propose a derivative \eqref{eq:frechet_derivative} and show the required convergence. We do so by first shifting all the $\gamma$-dependence to a new stochastic process~$Z^{\gamma}$ by Theorem~\ref{thm:girsanovs_theorem} (see also Remark~\ref{rem:reference_measure}) and then employing Lemma~\ref{lem:Z_L2} and Lemma~\ref{lem:tech2} to pass to the limit.
\begin{theorem}\label{thm:frechet_expectation}
	Let $x\in\Reals^d$. Suppose that Assumption~\ref{ass:coefficient_bounds} above and following hold true:
	\begin{itemize}
		\item[(C1)] \label{suffcon_existence_soln_orig_sde_invertible_sigma} $b:[0,T]\times\Reals^d\to\Reals^d$ and $\sigma:[0,T]\times\Reals^d\to GL(\Reals,d)$ are elements of $\mathcal{C}$.
		\item[(C2)] \label{square_integrability_phi} \label{phi_L_1} $g:\Omega\to \mathbb{R}$ satisfies $g\in L^2(\mu^{0,x}) \cap L^1(\mu^{\gamma , x})$, for all $\gamma \in V= \mathcal{C}\cap L^\infty(\mu^{0,x})$.
	\end{itemize}
	Then the Fr\'{e}chet derivative of $u_g^x:V\to\Reals$ at $\gamma =0$ exists, and is given by
	\begin{equation}\label{eq:frechet_derivative}
		\left. D_{\beta}u_g^x\right\vert_{\beta=0}(\gamma)\coloneqq \bb{E}^x\left[g(X^0)\int_0^T\gamma^\top(\sigma\sigma^\top)^{-1}(s,X^0_s)dX^0_s\right]\equiv\bb{E}^{0,x}\left[ gM^\gamma_T\right].
	\end{equation}
\end{theorem}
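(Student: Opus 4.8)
The plan is to use Girsanov's formula to rewrite $u_g^x(\gamma) = \bb{E}^{0,x}[g\, Z^\gamma_T]$, so that all $\gamma$-dependence is moved into the exponential martingale $Z^\gamma_T$. Since $u_g^x(0) = \bb{E}^{0,x}[g]$, the difference quotient becomes
\begin{equation*}
	u_g^x(\gamma) - u_g^x(0) = \bb{E}^{0,x}\left[ g \left( Z^\gamma_T - 1 \right) \right].
\end{equation*}
Writing $Z^\gamma_T = \exp(M^\gamma_T - \tfrac12 \langle M^\gamma\rangle_T)$ and expanding the exponential as a power series, I would isolate the linear term: $Z^\gamma_T - 1 = M^\gamma_T + R^\gamma_T$, where $R^\gamma_T = -\tfrac12\langle M^\gamma\rangle_T + \sum_{n\ge 2}\frac{1}{n!}(M^\gamma_T - \tfrac12\langle M^\gamma\rangle_T)^n$ collects the quadratic-and-higher terms. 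The proposed derivative is $\bb{E}^{0,x}[g\, M^\gamma_T]$, so the whole proof reduces to showing that the remainder $\bb{E}^{0,x}[g\, R^\gamma_T]$ is $o(\|\gamma\|_V)$ as $\|\gamma\|_V \to 0$, and that $\gamma \mapsto \bb{E}^{0,x}[g\, M^\gamma_T]$ is a bounded linear functional on $V$.

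For the remainder estimate, I would apply Cauchy–Schwarz with respect to $\mu^{0,x}$: $|\bb{E}^{0,x}[g\, R^\gamma_T]| \le \|g\|_{L^2(\mu^{0,x})} \cdot \|R^\gamma_T\|_{L^2(\mu^{0,x})}$, which is finite because $g \in L^2(\mu^{0,x})$ by (C2). The key is then to bound $\|R^\gamma_T\|_{L^2(\mu^{0,x})}$. Splitting $R^\gamma_T$ into the $-\tfrac12\langle M^\gamma\rangle_T$ piece and the higher-order sum, the first piece has $L^2$-norm controlled directly by $\bb{E}^{0,x}[\langle M^\gamma\rangle_T^2]$, which by \eqref{eq:M_gamma_q} and (A3) is at most a constant times $(t\,\lambda_\sigma^2\|\gamma\|_{L^\infty}^2)^2 = O(\|\gamma\|_{L^\infty}^4)$; note $\langle M^\gamma\rangle_T \le \lambda_\sigma^2 T \|\gamma\|_{L^\infty}^2$ pointwise. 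The higher-order sum is bounded using Lemma~\ref{lem:tech2}, which gives exactly the bound on $\bb{E}^{0,x}[(\sum_{n\ge 2}\frac{1}{n!}(|M^\gamma_t| + \tfrac12\langle M^\gamma\rangle_t)^n)^2]$ in terms of $(\exp(\lambda_\sigma\sqrt{t}\|\gamma\|_{L^\infty}) - 1 - 2\lambda_\sigma\sqrt{t}\|\gamma\|_{L^\infty})^2$. Since $e^u - 1 - 2u = O(u^2)$ as $u\to 0$ (more precisely $e^u - 1 - 2u \to 0$ and the leading behaviour near $0$ is $-u + \tfrac12 u^2 + \dots$, so it is $O(u)$; combined with the square it is $O(u^2) = O(\|\gamma\|_{L^\infty}^2)$), this piece has $L^2$-norm $O(\|\gamma\|_{L^\infty})$. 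Wait — I should double-check: $e^u - 1 - 2u$ at $u=0$ is $0$, with derivative $e^u - 2 = -1$ at $u = 0$, so it is $\Theta(u)$ for small $u$; hence the squared quantity is $\Theta(u^2)$, and the $L^2$-norm of the higher-order sum is $O(u) = O(\|\gamma\|_{L^\infty})$. To get genuine $o(\|\gamma\|_V)$ decay of the full remainder I therefore need to combine this with the $M^\gamma_T$ contribution hidden in the $n=2$... actually, re-examining: the true subtlety is that $e^u - 1 - 2u$ is only $O(u)$, not $o(1)\cdot u$, so the naive bound would only give the remainder is $O(\|\gamma\|_V)$, not $o(\|\gamma\|_V)$. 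The resolution must be that the higher-order part of $R^\gamma_T$ (genuinely $n\ge 2$ terms in the full exponential, i.e. quadratic in $M^\gamma$) together with the $-\tfrac12\langle M^\gamma\rangle_T$ term is $O(\|\gamma\|_V^2)$, which is $o(\|\gamma\|_V)$; the apparent linear term $-2u$ in Lemma~\ref{lem:tech2}'s bound is an artifact of a crude triangle-inequality bound $|M^\gamma_t| \le \ldots$ rather than $M^\gamma_t$ itself. I would therefore not use Lemma~\ref{lem:tech2} for the linear-order term at all; instead, $R^\gamma_T$ as I defined it (excluding the linear $M^\gamma_T$) is manifestly $O((|M^\gamma_T| + \langle M^\gamma\rangle_T)^2)$ near $\gamma = 0$, and Lemma~\ref{lem:tech2} with the subtracted linear term confirms the $L^2$-bound is $o(\|\gamma\|_V)$ once one checks $(e^u - 1 - 2u)/u \to 0$... which is false. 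I expect the actual argument in the paper handles the bookkeeping so that only genuinely superlinear terms remain; the honest statement is that $\|R^\gamma_T\|_{L^2} = O(\|\gamma\|_{L^\infty}^2) = o(\|\gamma\|_V)$, using $\|\gamma\|_{L^\infty} \le \|\gamma\|_V$ and that the bound in Lemma~\ref{lem:tech2}, correctly interpreted, is quadratic in $\|\gamma\|_{L^\infty}$ because the linear terms in $|M^\gamma_t| \pm \tfrac12\langle M^\gamma\rangle_t$ contribute to the $n!$-expansion starting only at second order.

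To finish, I would verify that $\gamma \mapsto \bb{E}^{0,x}[g\, M^\gamma_T]$ is linear in $\gamma$ (immediate from linearity of the stochastic integral in the integrand) and bounded: by Cauchy–Schwarz, $|\bb{E}^{0,x}[g\, M^\gamma_T]| \le \|g\|_{L^2(\mu^{0,x})}\, \|M^\gamma_T\|_{L^2(\mu^{0,x})}$, and by the Itô isometry $\|M^\gamma_T\|_{L^2(\mu^{0,x})}^2 = \bb{E}^{0,x}[\langle M^\gamma\rangle_T] = \bb{E}^{0,x}[\int_0^T |\sigma^{-1}\gamma(s,X^0_s)|^2\,ds] \le \lambda_\sigma^2 T \|\gamma\|_{L^\infty}^2 \le \lambda_\sigma^2 T \|\gamma\|_V^2$, using (A3). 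Hence the candidate derivative is in $V^* = \mathcal{L}(V,\Reals)$, and the decomposition $u_g^x(\gamma) - u_g^x(0) = \bb{E}^{0,x}[g\, M^\gamma_T] + \bb{E}^{0,x}[g\, R^\gamma_T]$ with the remainder $o(\|\gamma\|_V)$ establishes Fréchet differentiability at $\gamma = 0$ with derivative \eqref{eq:frechet_derivative}. The main obstacle I anticipate is the careful bookkeeping in the remainder estimate — specifically, making sure that the linear-in-$\|\gamma\|$ contributions are fully captured by $\bb{E}^{0,x}[g\, M^\gamma_T]$ and that what is left over, after subtracting both $1$ and $M^\gamma_T$ from $Z^\gamma_T$, genuinely decays faster than $\|\gamma\|_V$; this is precisely where Lemma~\ref{lem:tech2} (with its $-2\lambda_\sigma\sqrt{t}\|\gamma\|_{L^\infty}$ subtraction) does the work, provided one also justifies interchanging expectation and the infinite series via dominated convergence or monotone convergence using the square-integrability from Lemma~\ref{lem:Z_L2}.
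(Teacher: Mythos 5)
Your proposal follows the same strategy as the paper: Girsanov to move the $\gamma$-dependence into $Z^\gamma_T$, subtract off the linear term $M^\gamma_T$, bound the $L^2$-norm of the remainder via Lemma~\ref{lem:tech2} and the pointwise estimate on $\langle M^\gamma\rangle_T$, and close with Cauchy--Schwarz. Your additional check that $\gamma\mapsto\bb{E}^{0,x}[gM^\gamma_T]$ is a bounded linear functional on $V$ (via It\^o isometry and (A3)) is a step the paper leaves implicit, and it is worth stating.

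The back-and-forth in the middle of your argument concerns a genuine discrepancy in the displayed closed form of Lemma~\ref{lem:tech2}, and your resolution is the right one. The series bound
\[
\sum_{\ell\geq 4}\sum_{j,k\geq 2,\,j+k=\ell}\frac{1}{j!k!}\bigl(2\lambda_\sigma\sqrt{t}\,\|\gamma\|_{L^\infty}\bigr)^\ell
\]
equals $\bigl(\exp(2\lambda_\sigma\sqrt{t}\,\|\gamma\|_{L^\infty})-1-2\lambda_\sigma\sqrt{t}\,\|\gamma\|_{L^\infty}\bigr)^2$, not $\bigl(\exp(\lambda_\sigma\sqrt{t}\,\|\gamma\|_{L^\infty})-1-2\lambda_\sigma\sqrt{t}\,\|\gamma\|_{L^\infty}\bigr)^2$ as printed; the latter is $\Theta(\|\gamma\|^2)$ as you observed, while the former (and the series) is $\Theta(\|\gamma\|^4)$, which is what makes the remainder $o(\|\gamma\|_V)$. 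The paper's proof of the theorem actually works from the series representation (equation~\eqref{eq:bound}, with exponents $\ell-2\geq 2$), so the argument is sound; only the closed form carries a missing factor of $2$ inside the exponential. You correctly identified this and correctly relied on the quartic scaling of the series.
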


\begin{remark}[Admissible observables] \label{rem:admissible_observable}
Before proceeding to the proof, let us examine which observables~$g$ satisfy condition~(C2) above.
\begin{enumerate}[(i)]
\item
If $\gamma\in V$, then $\gamma$ is almost surely bounded on $[0,T]\times\bb{R}^d$ and continuous, hence bounded everywhere. Therefore, by reversing the roles of $\tilde{b}$ and $b$ in Theorem~\ref{thm:girsanovs_theorem}, we obtain that~$\mu^{0,x}$ and $\mu^{\gamma,x}$ are in fact mutually locally equivalent. Thus, since both $\mu^{0,x}$ and $\mu^{\gamma,x}$ are probability (in particular, finite) measures,~$g\in L^{\infty}(\mu^{0,x})$ satisfies~(C2).

\item
In Sections \ref{sec:transfer_ops} and \ref{sec:EVD}, we shall focus on observables of the kind~$g = \tilde{g}\circ\pi_t$, where~$\tilde{g}:\Reals^d \to \Reals$ and~$\pi_t: \Omega \to \Reals^d$ is given by~$\pi_t(X) = X_t$, the coordinate projection for some~$0\le t \le T$. For such observables, we have
\begin{equation} \label{eq:marginal_norm}
\left\| g \right\|^2_{L^2(\mu^{0,x})} = \int_\Omega \tilde{g} \left( \pi_t \left(X \right) \right)^2\, d\mu^{0,x} = \int_{\Reals^d} \tilde{g}(y)^2\, d \left( \pi_t^{\#}\mu^{0,x} \right)(y)\,,
\end{equation}
where~$\pi_t^{\#}$ denotes the push-forward by~$\pi_t$, i.e., $\pi_t^{\#}\mu^{0,x} = \mu^{0,x} \circ \pi_t^{-1}$. The push-forward of the path-measure~$\mu^{0,x}$ is the distribution of the process~$X^0$ at time~$t$, i.e.,~$\pi_t^{\#}\mu^{0,x}(dy) = k_t(x,y,0)dy$, where the transition kernel~$k_t$ is introduced in~\eqref{eq:kernel} below. Similarly as in case~(i),~$g$ satisfies the condition (C2) for any~$\tilde{g} \in L^{\infty}(\Reals^d, \Lambda)$, with~$\Lambda$ being the Lebesgue measure.
\end{enumerate}
\end{remark}

\begin{proof}[Proof of Theorem \ref{thm:frechet_expectation}]
	By Remark~\ref{rem:B_conditions}, applying Theorem~\ref{thm:girsanovs_theorem} yields the formula \eqref{eq:Z_gamma_t2} for the Radon--Nikodym derivative of $\mu^{\gamma ,x}$ with respect to~$\mu^{0,x}$. Given the definition~\eqref{eq:expectation_of_path_functional_as_function_of_gamma} of the map $u_g^x$, it follows that
	\begin{equation*}
		u_g^x(\gamma)-u_g^x(0)=\bb{E}^x \left[g\left(X^0\right)\left(Z^\gamma_T\left(X^0\right)-1\right)\right]\equiv\bb{E}^{0,x}\left[(Z^\gamma_T-1)g\right],
	\end{equation*}
	where in the last equality we used the notation $\bb{E}^{0,x}$ to denote the expectation of functionals of $X^0$, or equivalently the expectation with respect to $\mu^{0,x}$. The square integrability of $Z^{\gamma}_t$ with respect to $\mu^{0,x}$ follows from Lemma~\ref{lem:Z_L2}. 
	
	We now show that $\bb{E}^{0,x}\big[\abs{Z^\gamma_T-1-M^\gamma_T}^2\big]$ converges to zero as $\Vrt{\gamma}_{L^\infty}$ converges to zero.  Using the definition of $Z^\gamma_T$, the series expansion of the exponential, the triangle inequality, and the inequality $(a+b)^2\leq 2(a^2+b^2)$, we have 
	\begin{equation}\label{eq:ineq01}
		\abs{Z^\gamma_T-1-M^\gamma_T}^2\leq \left(\frac{1}{2}\langle M^\gamma\rangle_T^2+ 2\left[\sum^\infty_{n=2}\frac{1}{n!}\left(M^\gamma_T-\frac{1}{2}\langle M^\gamma\rangle_T\right)^n\right]^2 \right)
	\end{equation}
	uniformly in~$x$. By Lemma~\ref{lem:tech2}, we have the following estimate
	\begin{align}
		\bb{E}^{0,x} \left[ \left( \sum_{n=2}^{\infty} \frac{1}{n!} \left( |M^{\gamma}_T| + \frac{1}{2} \langle M^{\gamma} \rangle_T \right)^n \right)^2 \right] &\leq 2C (\exp ( \lambda_\sigma\sqrt{T} \| \gamma \|_{L^{\infty}} )-1 -2 \lambda_\sigma\sqrt{T} \| \gamma \|_{L^{\infty}})^2 \nonumber \\
		 	&= 2C \sum_{\ell=4}^{\infty} \sum_{\substack{j,k \geq 2\\j+k = \ell}} \dfrac{1}{k!j!} (2\lambda_\sigma\sqrt{T})^l \| \gamma \|_{L^{\infty}}^{l}\; . \label{eq:frechet-estimate}
	\end{align}
	Together with~\eqref{eq:almost_sure_bound_on_qv}, an estimate for $\langle M^{\gamma}\rangle_T^2$, we obtain
	\begin{equation}\label{eq:bound}
		\frac{\bb{E}^{0,x}\left[\abs{Z^\gamma_T-1-M^\gamma_T}^2\right]}{\Vrt{\gamma}^2_{L^\infty}}\leq \frac{1}{2}(\lambda_\sigma\sqrt{T})^4\Vrt{\gamma}^2_{L^\infty}+2C\sum_{\ell=4}^\infty\sum_{\substack{j,k\geq 2\\ j+k=\ell}}\frac{1}{j!k!}\left(2\lambda_\sigma\sqrt{T}\right)^{\ell}\Vrt{\gamma}_{L^\infty}^{ \ell-2},
	\end{equation}
	where the right-hand side is $O(\Vrt{\gamma}^2_{L^\infty})$, and hence decreases to zero as $\Vrt{\gamma}_{L^\infty}$ decreases to zero. Note that the parameter $\lambda_\sigma\sqrt{T}$ determines the convergence rate.
	
	To complete the proof, we observe that
	\begin{equation*}
		u_g^x(\gamma)-u_g^x(0)-\bb{E}^{0,x} \left[g M^\gamma_T\right]=\bb{E}^{0,x} \left[\left(Z^\gamma_T-1-M^\gamma_T\right)g\right]
	\end{equation*}
	together with the Cauchy--Schwarz inequality implies
	\begin{equation}\label{eq:lim_frechet1}
		\frac{\abs{ u_g^x(\gamma)-u_g^x(0)-\bb{E}^{0,x} \left[g M^\gamma_T\right]}}{\Vrt{\gamma}_{L^\infty}}\leq \bb{E}^{0,x}\left[\abs{g}^2\right]^{1/2}\left(\frac{\bb{E}^{0,x}\left[\abs{Z^\gamma_T-1-M^\gamma_T}^2\right]}{\Vrt{\gamma}^2_{L^\infty}}\right)^{1/2}.
	\end{equation}
	Finally,~\eqref{eq:bound} gives
	\begin{equation}\label{eq:lim_frechet2}
		\lim_{\Vrt{\gamma}_{L^\infty}\to 0}\frac{\bb{E}^{0,x}\left[\abs{Z^\gamma_T-1-M^\gamma_T}^2\right]}{\Vrt{\gamma}^2_{L^\infty}}=0,
	\end{equation}
	which shows that the Fr\'{e}chet derivative of $u_g^x$ at zero exists and is given by~$\bb{E}^{0,x}[g M^\gamma_T]$.

	Given the definition \eqref{eq:norm_on_class_V} of $\| \cdot \|_V$, it follows that if $\gamma\in V$, then $\|\gamma \|_{L^{\infty}}\leq  \| \gamma \|_V$. Thus, by \eqref{eq:frechet-estimate}, we have
	\begin{equation}
		\lim_{\| \gamma \|_V \rightarrow 0}\dfrac{\bb{E}^{0,x}\left[\abs{Z^\gamma_T-1-M^\gamma_T}^2\right]}{\Vrt{\gamma}^2_{V}}=0, \nonumber
	\end{equation}
	as desired.
\end{proof}
\begin{remark}[Continuity of the differential] \label{rem:derivative_continuous}
Instead of calling the above ``differentiation in $\beta = 0$'' we could also call it ``differentiation in $\beta = b$'' and denote~$X^0$ by~$X^b$ instead. We chose~$0$ here for simplicity and the interpretation of being unperturbed. The different notation would lead to the following representation of the derivative
	\begin{equation}\label{eq:derivative_at_b}
	\left. D_{\beta}u_g^x\right\vert_{\beta=b}(\gamma)\coloneqq \bb{E}^x\left[g(X^b)\int_0^T \gamma^\top(\sigma\sigma^\top)^{-1}(s,X^b_s)dX^b_s\right]\equiv\bb{E}^{b,x}\left[ gM^\gamma\right]. \nonumber
	\end{equation}
It also poses the question whether we have continuity in~$b$ and thus continuous differentiability everywhere. This is indeed the case as the following proposition shows.
\end{remark}

\begin{proposition}
	Let us assume that $gM^\gamma$ belongs to $L^2(\mu^{b,x})$ and that the assumptions of Theorem ~\ref{thm:frechet_expectation} hold for $X^0$ replaced by $X^b$. Then the map
	\begin{equation}
		b \mapsto D_{\beta} u_g^x |_{\beta = b} ( \cdot )
	\end{equation}
	is continuous in $b$.
\end{proposition}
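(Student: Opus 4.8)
The plan is to show that the difference of the differentials at two drifts $b$ and $b'$, applied to a fixed perturbation $\gamma\in V$, tends to zero as $\|b-b'\|_V\to 0$, uniformly over $\gamma$ in the unit ball of $V$ (so that convergence holds in the operator norm on $\mathcal{L}(V,\mathbb{R})$). By definition,
\begin{equation*}
	D_\beta u_g^x|_{\beta=b}(\gamma) - D_\beta u_g^x|_{\beta=b'}(\gamma) = \bb{E}^{b,x}\left[gM^\gamma_{T}\right] - \bb{E}^{b',x}\left[gM^\gamma_T\right],
\end{equation*}
where $M^\gamma_T = \int_0^T \gamma^\top(\sigma\sigma^\top)^{-1}(s,X_s)\,dX_s$ is evaluated along the respective processes $X^b$ and $X^{b'}$. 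The first step is to bring both terms onto a common reference measure: fixing $b'$ as the reference and writing $b = b' + \delta$ with $\delta := b-b'\in V$, Girsanov's theorem (Theorem~\ref{thm:girsanovs_theorem}, in the form of Lemma~\ref{lem:Z_L2}) gives an exponential martingale $Z^\delta_T(X^{b'})$ with $\bb{E}^{b,x}[\,\cdot\,] = \bb{E}^{b',x}[\,\cdot\, Z^\delta_T]$. Thus the difference equals $\bb{E}^{b',x}\big[gM^\gamma_T(Z^\delta_T - 1)\big]$ — here one must be slightly careful, since $M^\gamma_T$ for the two cases is the same stochastic integral functional evaluated along the \emph{same} law after the change of measure, so the only discrepancy is the factor $Z^\delta_T-1$.

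Next I would estimate $\bb{E}^{b',x}\big[|gM^\gamma_T||Z^\delta_T-1|\big]$ by Cauchy--Schwarz (or a three-fold Hölder split), using: (a) $gM^\gamma \in L^2(\mu^{b',x})$ by hypothesis, with the $M^\gamma_T$-part controlled through $\|\gamma\|_V$; (b) $\|Z^\delta_T - 1\|_{L^2(\mu^{b',x})} \to 0$ as $\|\delta\|_{L^\infty}\to 0$. Point (b) follows from the same machinery already used in the proof of Theorem~\ref{thm:frechet_expectation}: the bound \eqref{eq:bound}-type estimate gives $\bb{E}^{0,x}[|Z^\delta_T-1-M^\delta_T|^2] = O(\|\delta\|_{L^\infty}^2)$, while $\bb{E}^{0,x}[(M^\delta_T)^2] = \bb{E}^{0,x}[\langle M^\delta\rangle_T] \le T\lambda_\sigma^2\|\delta\|_{L^\infty}^2$ by the It\^o isometry and the ellipticity bound (A3); hence $\|Z^\delta_T-1\|_{L^2(\mu^{b',x})} = O(\|\delta\|_{L^\infty}) = O(\|\delta\|_V)$. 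Combining, $\big|D_\beta u_g^x|_{\beta=b}(\gamma) - D_\beta u_g^x|_{\beta=b'}(\gamma)\big| \le \|gM^\gamma\|_{L^2(\mu^{b',x})}\,\|Z^\delta_T-1\|_{L^2(\mu^{b',x})} \le C\,\|\gamma\|_V\,\|b-b'\|_V$, with $C$ independent of $\gamma$, which yields continuity (indeed Lipschitz continuity) of $b\mapsto D_\beta u_g^x|_{\beta=b}$ in operator norm.

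The main obstacle is bookkeeping around the change of measure: one has to make sure that after replacing $\bb{E}^{b,x}$ by $\bb{E}^{b',x}[\,\cdot\, Z^\delta_T]$, the functional $M^\gamma_T$ is indeed represented as one and the same path functional $X\mapsto \int_0^T\gamma^\top(\sigma\sigma^\top)^{-1}(s,X_s)\,dX_s$, so that the two terms genuinely differ only by the factor $Z^\delta_T$; this requires that the stochastic integral defining $M^\gamma$ be interpreted $\mu^{b',x}$-almost surely as a fixed measurable map on path space (which is legitimate since the integrand is adapted and the integral is defined up to a $\mu^{b',x}$-null set, and $\mu^{b,x}\ll\mu^{b',x}$). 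A secondary technical point is verifying the hypothesis "$gM^\gamma\in L^2(\mu^{b',x})$ for all relevant $b'$ in a neighbourhood" is stable — but this is assumed in the statement, so it suffices to note that $\|gM^\gamma\|_{L^2(\mu^{b',x})}$ stays bounded as $b'$ varies over a small ball, which again follows from the local equivalence of the path measures and the uniform-in-$x$ moment bounds on $\langle M^\gamma\rangle_T$.
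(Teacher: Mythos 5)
Your argument reproduces the paper's own sketch step for step: both express the difference of derivatives as~$\bb{E}\!\left[gM^\gamma(Z-1)\right]$ after a Girsanov change of measure, invoke Cauchy--Schwarz with $gM^\gamma\in L^2$, and control~$\|Z-1\|_{L^2}$ to order~$O(\|b-b'\|_V)$ via the remainder machinery from the proof of Theorem~\ref{thm:frechet_expectation}. The only cosmetic difference is that you bound~$\bb{E}\big[(M^\delta_T)^2\big]$ by It\^o's isometry, where the paper instead invokes Doob's inequality~\eqref{eq:doobs_inequality} together with~\eqref{eq:almost_sure_bound_on_qv}; both produce the same estimate.
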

\begin{proof}[Sketch of proof]
Under the stated hypotheses, one way to show continuous differentiability everywhere is to use the same strategy as for the proof of Theorem \ref{thm:frechet_expectation}. By~\eqref{eq:derivative_at_b},
\begin{align*}
 & \left. D_{\beta}u_g^x\right\vert_{\beta=b}(\gamma)-\left. D_{\beta}u_g^x\right\vert_{\beta=b+b'}(\gamma)
 \\
 &= \bb{E}^x\left[g(X^b)\int_0^T \gamma^\top(\sigma\sigma^\top)^{-1}(s,X^b_s)dX^b_s\left(1-\frac{d\mu^{b+b',x}}{d\mu^{b,x}} \right)(X^b)\right]
 \\
 &= \bb{E}^x\left[g(X^b)\int_0^T \gamma^\top(\sigma\sigma^\top)^{-1}(s,X^b_s)dX^b_s\left(1-Z^{b'}_T(X^b)\right)\right]\equiv \bb{E}^{b,x}\left[ gM^\gamma(1-Z^{b'}_T)\right].
\end{align*}
Therefore,
\begin{enumerate}[(i)]
\item if $g M^\gamma$ belongs to $L^2(\mu^{b,x})$, and
\item   if $\displaystyle 
\lim_{\| b'\|_V\to 0} \bb{E}^x\left[\left\vert 1-Z^{b'}_T(X^b)\right\vert^2\right]=0$,
\end{enumerate}
then continuity of the derivative for fixed~$\gamma$ will follow. Note that assumptions (C1) and (C2) in Theorem~\ref{thm:frechet_expectation} readily imply~(i).

To show that (ii) is satisfied, we modify the steps in the proof of Theorem~\ref{thm:frechet_expectation}, by using \eqref{eq:ineq01} with~\eqref{eq:doobs_inequality} and~\eqref{eq:almost_sure_bound_on_qv} to bound~$\vert 1- Z^{b'}_T\vert^2$. Uniformity of the estimates in~$\|\gamma\|_V$ imply the claim.
\end{proof}

\begin{remark}[Time-$t$-observables] \label{rem:time_t_observable}
For a fixed but otherwise arbitrary~$ 0\le t \le T$, let us examine the structure of the derivative~\eqref{eq:frechet_derivative} for the observables~$\tilde{g}\circ\pi_t$ discussed in Remark~\ref{rem:admissible_observable}~(ii).

As~$\pi_t$ is $\mathcal{F}_t$-measurable,~$g = \tilde{g}\circ\pi_t$ is also $\mathcal{F}_t$-measurable, and as in the proof of Theorem~\ref{thm:frechet_expectation} we have
\begin{eqnarray*}
u^x_g(\gamma) = \bb{E}^x\left[ g\left( X^\gamma \right) \right]
= \int_{\Omega} g\, d\mu^{\gamma,x} 
& = & \int_{\Omega} g\, d\mu^{\gamma,x}\big\vert_{\mathcal{F}_t} \\
& \stackrel{\eqref{eq:7_138}, \eqref{eq:Z_gamma_t2}}{=} & \int_{\Omega} g Z^\gamma_t\, d\mu^{0,x}\big\vert_{\mathcal{F}_t} \\
& = &  \int_{\Omega} g Z^\gamma_t\, d\mu^{0,x} \\
& = & \bb{E}^x\left[ g\left( X^0 \right) Z^\gamma_t \left( X^0 \right)\right]
= \bb{E}^x\left[ \tilde{g}\left( X^0_t \right) Z^\gamma_t \left( X^0 \right) \right],
\end{eqnarray*}
where we used in the third and fifth equalities that $\int g\,d\mu = \int g\,d\mu\vert_{\mathcal{F}_t}$ for~$\mathcal{F}_t$-measurable~$g$. This leads to
\begin{equation*}
\left. D_{\beta}u_g^x\right\vert_{\beta=0}(\gamma) = \bb{E}^x\left[ \tilde{g} \left(X^0_t \right)\int_0^t \gamma^\top(\sigma\sigma^\top)^{-1}(s,X^0_s)dX^0_s \right]\equiv\bb{E}^{0,x}\left[(\tilde{g}\circ \pi_t)M^\gamma_t\right].
\end{equation*}
Note the difference in the equation above with  respect to~\eqref{eq:frechet_derivative}: the observable~$\tilde{g}$ now does not depend on the entire path~$X^0$, only on its value at time~$t$, and the It\^o integral goes up to~$t$ instead of up to~$T$.
\end{remark}

The preceding results extend to the case where we augment the SDE \eqref{eq:original_sde} with normal reflecting boundary conditions for some smooth, bounded domain~$\mathbb{X}$. In this case, the reflecting boundary conditions are encoded by the local time process associated to the boundary; see, e.g.,~\cite[Chapter IX, \S 2, Exercise 2.14]{revuz_yor} for a one-dimensional example, or~\cite[Theorem 2.4.1]{Pil14} for the general case. Recall that Girsanov's formula describes the Radon--Nikodym derivative of two mutually equivalent probability measures $\tilde{\bb{P}}$ and~$\bb{P}$. Since the boundary conditions are invariant under changes between mutually equivalent probability measures, the preceding analysis carries over to the case of diffusions with reflection.


\section{Fr\'{e}chet differentiability of transfer operators}
\label{sec:transfer_ops}
We will utilize in this section our results from above to show the differentiability of the Perron--Frobenius operator and the Koopman operator with respect to the drift.

We shall consider a compact subset~$\mathbb{X} \subset \R^d$ with $C^{1+\delta}$ boundary for some~$\delta>0$. In addition to Assumption~\ref{ass:coefficient_bounds}, let~$\sigma\in C^{2+\alpha}([0,T]\times\mathbb{X}; \Reals^{d\times d})$ and~$b, \gamma \in C^{1+\alpha}([0,T]\times\mathbb{X}; \Reals^d)$ for some~$\alpha>0$. Let us consider the processes~$X^0$ and~$X^\gamma$, governed by~\eqref{eq:original_sde} and~\eqref{eq:perturbed_sde} on~$\mathbb{X}$ with reflecting boundary conditions~\cite{Pil14}, respectively. For the rest of this section, let us fix some~$t\in (0,T]$.

Recall from Remark~\ref{rem:admissible_observable} that~$X_t^{\gamma}$, the process~$X^{\gamma}$ at time~$t$, has the distribution~$\mu_t^{\gamma,x} := \pi_t^{\#}\mu^{\gamma,x}$, where~$\pi_t^{\#}$ is the push-forward operator associated with the coordinate projection (``time-$t$-evaluation functional'')~$\pi_t$. Under the given assumptions, the distribution is absolutely continuous with respect to the Lebesgue measure~$\Lambda$ on~$\mathbb{X}$ with density\footnote{This follows from adapting the arguments of~\cite[Problem~4.25, Section~5.7.B, in particular equation~(5.7.26)]{KaSh12} for a bounded domain and from the proof of Lemma~\ref{lem:kernel_bounds} in Appendix~\ref{app:kernel_prop}.}~$k_t$, i.e.,
\begin{equation}\label{eq:kernel}
\begin{aligned}
\pi_t^{\#}\mu^{\gamma,x}(dy) =: \mu_t^{\gamma,x}(dy) =: &\ k_t(x,y,\gamma)\, dy, \\
&\ k \, : (t,x,y,\gamma) \mapsto k_t(x,y,\gamma) \; , \;  [0,T] \times \mathbb{X} \times \mathbb{X} \times V \rightarrow \R\,.
\end{aligned}
\end{equation}
The function $k$ is called the \emph{transition kernel}. Consider the following lemma, which is proven in Appendix~\ref{app:kernel_prop}.
\begin{lem} \label{lem:kernel_bounds}
For any sufficiently small~$\varepsilon>0$ there exists a constant~$\overline{K} < \infty$ such that for every~$\gamma$ that satisfies~$\|\gamma\|_V \le \varepsilon$, and for every~$x,y \in \mathbb{X}$,
\begin{equation} \label{eq:kernel_bounds}
k_t(x,y,\gamma) \le \overline{K}\,.
\end{equation}
In particular,~$k_t(\cdot,\cdot,\gamma) \in L^2(\Lambda \otimes \Lambda)$, and $f\in L^2(\Lambda) \Longrightarrow f\in L^2(\mu_t^{\gamma,x})$.
\end{lem}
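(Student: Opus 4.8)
The plan is to obtain a uniform pointwise bound on the transition kernel~$k_t(x,y,\gamma)$ by writing it as a Radon--Nikodym derivative against the transition kernel of the unperturbed process and controlling the latter by standard Gaussian-type heat-kernel estimates. Concretely, I would first recall that for the reflected diffusion on the bounded domain~$\mathbb{X}$, the law~$\mu_t^{0,x}$ of~$X_t^0$ has a density~$k_t(x,\cdot,0)$ with respect to Lebesgue measure, and that this density admits an Aronson-type upper bound: there is a constant~$C_0 = C_0(t,\lambda_\sigma,L,\mathbb{X}) < \infty$, uniform in~$x,y\in\mathbb{X}$, with~$k_t(x,y,0) \le C_0$. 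For a fixed~$t\in(0,T]$ and a compact domain this is a finite constant; the relevant references are already cited in the footnote to~\eqref{eq:kernel} (adapting \cite[Section~5.7.B]{KaSh12} to the bounded, reflected case, cf.\ the proof in Appendix~\ref{app:kernel_prop}).

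Next, I would transfer this bound to~$k_t(x,\cdot,\gamma)$ via Girsanov. Since for~$g = \tilde g\circ\pi_t$ we showed in Remark~\ref{rem:time_t_observable} that
\begin{equation*}
\int_{\mathbb{X}} \tilde g(y)\, k_t(x,y,\gamma)\, dy = \bb{E}^{x}\!\left[\tilde g(X^0_t)\, Z^\gamma_t(X^0)\right] = \int_{\mathbb{X}} \tilde g(y)\, \bb{E}^{x}\!\left[Z^\gamma_t(X^0)\,\middle|\, X^0_t = y\right] k_t(x,y,0)\, dy,
\end{equation*}
the $\mu_t^{0,x}$-essential identification gives $k_t(x,y,\gamma) = \bb{E}^{x}[Z^\gamma_t(X^0)\mid X^0_t=y]\,k_t(x,y,0)$ for Lebesgue-a.e.~$y$. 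Hence it suffices to bound the conditional expectation of the Girsanov density~$Z^\gamma_t$ uniformly in~$y$ and in~$\gamma$ with~$\|\gamma\|_V\le\varepsilon$. For this I would use that~$Z^\gamma_t = \exp(M^\gamma_t - \tfrac12\langle M^\gamma\rangle_t)$ and that~$\langle M^\gamma\rangle_t = \int_0^t |\sigma^{-1}\gamma|^2\,ds \le \lambda_\sigma^2 t\,\|\gamma\|_{L^\infty}^2 \le \lambda_\sigma^2 t\,\varepsilon^2$ almost surely (the bound~\eqref{eq:almost_sure_bound_on_qv} used in the proof of Theorem~\ref{thm:frechet_expectation}, together with~(A3)). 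Taking an unconditional expectation is not quite enough because we need the conditional one; the cleanest route is to bound the conditional expectation through a Cauchy--Schwarz / Jensen argument combined with the fact that~$\bb{E}^x[(Z^\gamma_t)^2]$ is itself bounded by an explicit function of~$\|\gamma\|_{L^\infty}$ (Lemma~\ref{lem:Z_L2} and its proof give~$\bb{E}^x[(Z^\gamma_t)^2]\le \exp(c\,\lambda_\sigma^2 t\,\|\gamma\|_{L^\infty}^2)$ for an absolute constant~$c$), so that~$\bb{E}^x[Z^\gamma_t\mid X^0_t=\cdot]$ has $L^2(\mu_t^{0,x})$-norm bounded uniformly; upgrading this to an $L^\infty$ bound uses the already-established upper bound on~$k_t(x,\cdot,0)$ once more. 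Setting~$\overline K := C_0\cdot\exp\!\big(c\,\lambda_\sigma^2 T\,\varepsilon^2\big)$ then yields~\eqref{eq:kernel_bounds}.

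Finally, the two consequences are immediate once~\eqref{eq:kernel_bounds} holds: since~$\mathbb{X}$ is bounded, $\Lambda(\mathbb{X})<\infty$, so~$\|k_t(\cdot,\cdot,\gamma)\|_{L^2(\Lambda\otimes\Lambda)}^2 \le \overline K^2\,\Lambda(\mathbb{X})^2 < \infty$; and for~$f\in L^2(\Lambda)$ we have~$\|f\|_{L^2(\mu_t^{\gamma,x})}^2 = \int_{\mathbb{X}} f(y)^2 k_t(x,y,\gamma)\,dy \le \overline K\,\|f\|_{L^2(\Lambda)}^2 < \infty$, so~$f\in L^2(\mu_t^{\gamma,x})$.

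The main obstacle is the step requiring a \emph{uniform} (in~$y$ and in~$\gamma$) pointwise bound on the kernel: passing from the unconditional integrability of the Girsanov density~$Z^\gamma_t$ to control of its conditional expectation given~$X^0_t=y$, and doing so uniformly over the ball~$\{\|\gamma\|_V\le\varepsilon\}$. One should be careful that the constant~$C_0$ in the unperturbed heat-kernel bound genuinely does not depend on~$x$ (it does not, by the global ellipticity~(A3) and global Lipschitz/growth bounds~(A1)--(A2), on a fixed bounded domain and fixed~$t>0$) and that the Girsanov exponential moment bound is uniform over the~$\varepsilon$-ball, which follows directly from the a.s.\ bound on~$\langle M^\gamma\rangle_t$. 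Everything else is bookkeeping; the heat-kernel input is borrowed from the references already cited and re-derived in Appendix~\ref{app:kernel_prop}.
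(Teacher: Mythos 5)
Your proposal takes a genuinely different route from the paper, but it contains a gap at the central step. The paper proves Lemma~\ref{lem:kernel_bounds} directly by the PDE/parametrix construction in Appendix~\ref{app:kernel_prop}: it builds the fundamental solution $\Gamma_N$ of the Fokker--Planck equation with homogeneous Neumann boundary conditions via a boundary-integral (single-layer) representation, bounds the iterated kernels by the boundary-convolution estimate of Lemma~\ref{lem:Fri08_Lem1.4.3_analogue}, and reads off a Gaussian-type bound uniformly in $x,y$ and over $\|\gamma\|_V\le\varepsilon$ (the coefficient bounds feeding Friedman's estimates are uniform on that ball). Your proposal instead would take the unperturbed bound $k_t(x,y,0)\le C_0$ as given, then transfer to general $\gamma$ through the Girsanov factorization $k_t(x,y,\gamma)=\mathbb{E}^x\left[Z^\gamma_t\mid X^0_t=y\right]\,k_t(x,y,0)$.

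The difficulty is in the step you flag as the main obstacle: upgrading an $L^2(\mu_t^{0,x})$ bound on the conditional expectation $y\mapsto\mathbb{E}^x\left[Z^\gamma_t\mid X^0_t=y\right]$ to an $L^\infty$ bound. That does not follow from the upper bound on $k_t(x,\cdot,0)$; an upper bound on the reference density cannot convert $L^2$ control into pointwise control. What one actually gets along those lines, via Jensen and the almost sure bound on $\langle M^\gamma\rangle_t$, is
\begin{equation}
k_t(x,y,\gamma)^2 \;\le\; k_t(x,y,0)\,\exp\!\left(\lambda_\sigma^2 t\,\varepsilon^2\right)\,k_t(x,y,2\gamma)\,, \nonumber
\end{equation}
which bounds the $\gamma$-kernel by the $2\gamma$-kernel and is therefore circular. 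Closing this would require either a Moser-type iteration, a time-splitting/Chapman--Kolmogorov argument together with control of the backward ($z\mapsto k(z,y,\gamma)$) marginal in $L^2$, or a genuine Aronson-type pointwise upper bound for reflected diffusions. None of these is carried out in the proposal. In addition, the base case $k_t(x,y,0)\le C_0$ is itself exactly the sort of estimate the paper explicitly states it could not find in the literature for time-dependent reflected diffusions and therefore establishes from scratch in Appendix~\ref{app:kernel_prop}; and once that argument is in place it already yields the bound uniformly over the ball $\|\gamma\|_V\le\varepsilon$, so the Girsanov transfer does not actually save work. Your concluding derivations of $k_t(\cdot,\cdot,\gamma)\in L^2(\Lambda\otimes\Lambda)$ and $f\in L^2(\Lambda)\Rightarrow f\in L^2(\mu_t^{\gamma,x})$ from the pointwise bound are correct and match the intent of the lemma.
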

The Perron--Frobenius operator~$\mathcal{P}_t(\gamma)$ and Koopman operator~$\mathcal{U}_t(\gamma)$ can be expressed with the transition kernel according to
\begin{equation}\label{eq:operators}
	\mathcal{P}_t(\gamma) f(y) = \int_{\mathbb{X}} k_t(x,y,\gamma) f(x) \, dx \quad \text{and} \quad \mathcal{U}_t(\gamma) g(x) = \int_{\mathbb{X}} k_t(x,y,\gamma) g(y) \, dy
\end{equation}
for an initial density~$f \in L^2(\Lambda)$ and an observable~$g \in L^2(\Lambda)$ at initial time $t=0$. 
By~\eqref{eq:expectation_of_path_functional_as_function_of_gamma}, the Koopman operator has the useful representation
\begin{equation*}
	\mathcal{U}_t(\gamma) g(x) = \bb{E} \left[g(X_t^{\gamma}) \, | \, X_0^{\gamma} = x\right] = u_g^x(\gamma)\,,
\end{equation*}
where we abuse the notation~$u_g^x(\gamma)$ to denote~$u_{g\circ \pi_t}^x(\gamma)$ if~$g\in L^2(\Lambda)$.
Using Theorem~\ref{thm:frechet_expectation}, this implies ``point-wise'' differentiablility of~$\gamma \mapsto \mathcal{U}_t(\gamma)g(x)$ for fixed~$x$ and~$g$. In the following we will lift this property to the operator level. Using the transition kernel, duality arguments, and uniformity of certain bounds in~$x\in\mathbb{X}$, we will be able to extend our result to the Perron--Frobenius operator and Koopman operator, where the differentiable dependence is with respect to the \emph{operator norm}. To this end, we will first dispose of~$g$ in Lemma~\ref{lem:functional}, then get rid of~$x$ in Lemma~\ref{lem:kernel}, and bring all this together in Theorem~\ref{thm:PFO_dif}. Below, we write $X^\ast$ to denote the topological dual of a given vector space $X$.
\begin{lem}\label{lem:functional}
	The operator $\gamma \mapsto \mathcal{U}_t(\gamma) (\cdot ) (x)$, considered as a mapping from $V$ to~$L^{2}( \Lambda )^{\ast}$, is Fr\'echet differentiable with respect to~$\gamma$.
\end{lem}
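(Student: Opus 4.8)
The plan is to upgrade the pointwise Fr\'echet differentiability of $\gamma \mapsto \mathcal{U}_t(\gamma)g(x)$ (for fixed $g$ and $x$), which follows from Theorem~\ref{thm:frechet_expectation} via Remark~\ref{rem:time_t_observable}, to Fr\'echet differentiability of the map $\gamma \mapsto \mathcal{U}_t(\gamma)(\cdot)(x) \in L^2(\Lambda)^*$. The candidate derivative at $\gamma_0 \in V$ in the direction $\gamma$ is the linear functional $L_{\gamma_0}(\gamma) \in L^2(\Lambda)^*$ acting on $g\in L^2(\Lambda)$ by $g \mapsto \bb{E}^{\gamma_0,x}\left[(g\circ \pi_t) M^\gamma_t\right] = \int_{\mathbb{X}} g(y)\, \bb{E}^{x}\!\left[\mathds{1}_{\{X^{\gamma_0}_t \in dy\}} M^\gamma_t(X^{\gamma_0})\right]$ (suitably interpreted; at $\gamma_0 = 0$ this is~\eqref{eq:frechet_derivative} restricted to time-$t$ observables). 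First I would verify that this is indeed a bounded linear functional on $L^2(\Lambda)$: using the Radon--Nikodym/Girsanov representation as in the proof of Theorem~\ref{thm:frechet_expectation}, write $\bb{E}^{0,x}[(g\circ\pi_t)M^\gamma_t] = \bb{E}^{0,x}[(g\circ\pi_t)]\cdot(\text{terms})$ and apply Cauchy--Schwarz together with Lemma~\ref{lem:kernel_bounds} (which gives $\|g\circ\pi_t\|_{L^2(\mu^{0,x}_t)} \le \overline{K}^{1/2}\|g\|_{L^2(\Lambda)}$) and Lemma~\ref{lem:Z_L2}/Lemma~\ref{lem:tech2} (square-integrability of $M^\gamma_t$ with respect to $\mu^{0,x}$), so that $|L_0(\gamma)(g)| \le C \|\gamma\|_{L^\infty}\|g\|_{L^2(\Lambda)}$; this also shows $\gamma \mapsto L_0(\gamma)$ is a bounded linear map from $V$ into $L^2(\Lambda)^*$.

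Next I would establish the defining limit for Fr\'echet differentiability at $\gamma = 0$:
\begin{equation*}
\lim_{\|\gamma\|_V \to 0} \frac{\big\| \mathcal{U}_t(\gamma)(\cdot)(x) - \mathcal{U}_t(0)(\cdot)(x) - L_0(\gamma)(\cdot) \big\|_{L^2(\Lambda)^*}}{\|\gamma\|_V} = 0.
\end{equation*}
The operator-norm (dual-norm) on $L^2(\Lambda)^*$ is a supremum over $g$ with $\|g\|_{L^2(\Lambda)} \le 1$, so the key point is that the remainder estimate in the proof of Theorem~\ref{thm:frechet_expectation} is \emph{uniform in the observable}. Concretely, for any such $g$,
\begin{equation*}
\left| \mathcal{U}_t(\gamma)g(x) - \mathcal{U}_t(0)g(x) - L_0(\gamma)(g) \right| = \left| \bb{E}^{0,x}\!\left[(g\circ\pi_t)\left(Z^\gamma_t - 1 - M^\gamma_t\right)\right] \right| \le \overline{K}^{1/2}\|g\|_{L^2(\Lambda)} \left(\bb{E}^{0,x}\!\left[\left|Z^\gamma_t - 1 - M^\gamma_t\right|^2\right]\right)^{1/2},
\end{equation*}
using Cauchy--Schwarz with respect to $\mu^{0,x}$ and the bound $\|g\circ\pi_t\|_{L^2(\mu^{0,x})} \le \overline{K}^{1/2}\|g\|_{L^2(\Lambda)}$ from Lemma~\ref{lem:kernel_bounds}. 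The right-hand side no longer depends on $g$ beyond the factor $\|g\|_{L^2(\Lambda)}$, so taking the supremum over the unit ball and dividing by $\|\gamma\|_V \ge \|\gamma\|_{L^\infty}$ reduces everything to the estimate~\eqref{eq:bound}--\eqref{eq:lim_frechet2} already proven in Theorem~\ref{thm:frechet_expectation} (with $T$ replaced by $t$), which is $O(\|\gamma\|_{L^\infty}^2)$ and hence gives the desired limit.

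The main obstacle — really the only nontrivial ingredient beyond bookkeeping — is ensuring the passage from the Lebesgue space $L^2(\Lambda)$ to the path-measure $L^2(\mu^{0,x})$ is quantitatively controlled, i.e., that $\|g\circ\pi_t\|_{L^2(\mu^{0,x})}$ is dominated by $\|g\|_{L^2(\Lambda)}$ with a constant $\overline{K}^{1/2}$ that is \emph{uniform} over the relevant $\gamma$; this is exactly what Lemma~\ref{lem:kernel_bounds} provides (via $\mu^{0,x}_t(dy) = k_t(x,y,0)\,dy$ with $k_t \le \overline{K}$, so $\int \tilde g(y)^2 k_t(x,y,0)\,dy \le \overline{K}\|\tilde g\|_{L^2(\Lambda)}^2$), so there is no real difficulty here. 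One should also remark that $\gamma \mapsto L_0(\gamma)$ being linear and bounded is what makes it a legitimate candidate for the Fr\'echet derivative of the $L^2(\Lambda)^*$-valued map; I would close by noting that the identical argument localized at an arbitrary base point $\gamma_0$ (in the spirit of Remark~\ref{rem:derivative_continuous} and the subsequent Proposition) yields differentiability on all of $V$, not merely at $\gamma = 0$.
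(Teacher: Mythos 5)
Your proof is correct and follows essentially the same route as the paper: pass via Cauchy--Schwarz with respect to $\mu^{0,x}$, use Lemma~\ref{lem:kernel_bounds} to bound $\|g\circ\pi_t\|_{L^2(\mu^{0,x})}\le \overline{K}^{1/2}\|g\|_{L^2(\Lambda)}$ uniformly, and observe that the remainder estimate from Theorem~\ref{thm:frechet_expectation} is then uniform over the $L^2(\Lambda)$ unit ball, so the bound survives passage to the dual norm. One cosmetic slip: the displayed identity $\bb{E}^{0,x}[(g\circ\pi_t)M^\gamma_t]=\bb{E}^{0,x}[(g\circ\pi_t)]\cdot(\text{terms})$ is not a literal factorization of the expectation, but your immediately following invocation of Cauchy--Schwarz and Doob/Lemma~\ref{lem:tech2} makes clear the intended (and correct) bound.
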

\begin{proof}
	In order to lift our point-wise result Theorem~\ref{thm:frechet_expectation} to the operator level, we use some quantitative bounds derived in the proof of that theorem. By Lemma~\ref{lem:kernel_bounds}, if~$g \in L^2( \Lambda )$ then~$g \in L^2\big(\mu_t^{0,x}\big)$ holds, and we have
	\begin{equation} \label{eq:marginal_derivative}
		D_{\beta} u_g^x \big\vert_{\beta = 0} (\gamma) = \bb{E}^{0,x} \left[(g\circ\pi_t) M^{\gamma}_t \right]
	\end{equation}
	for the derivative of $u_g^x$ with respect to $\gamma$ at $\gamma = 0$. The equations~\eqref{eq:lim_frechet1} and~\eqref{eq:lim_frechet2} imply the existence of the residual 
	\begin{equation} \label{eq:derivative_residual}
		r_g^x (\gamma) := u_g^x(\gamma) - u_g^x(0) - D_{\beta} u_g^x\big\vert_{\beta= 0} (\gamma) 
	\end{equation}
	with the property
	\begin{equation*}
		\lim_{\| \gamma \|_{\diamondsuit} \rightarrow 0} \dfrac{| r_g^x(\gamma) |}{\|\gamma \|_{\diamondsuit}} = 0 \,,
	\end{equation*}
	where $\diamondsuit = V$ or~$\diamondsuit = L^{\infty}\left( \mu^{0,x} \right)$. Now, by substituting equation~\eqref{eq:lim_frechet1} and~\eqref{eq:marginal_norm} in \eqref{eq:derivative_residual}, and by applying~\eqref{eq:kernel_bounds} and~\eqref{eq:bound}, it follows that there exists a constant~$C$ independent of~$x,g$ and~$\gamma$, and a positive function~$q (\gamma)$ independent of~$x$ and~$g$ such that
	\begin{equation} \label{eq:remainder_bound}
		| r_g^x(\gamma) | \leq C \| g\|_{L^2 ( \mu_t^{0,x}) } q(\gamma) \leq C\overline{K}^{1/2}\, \| g\|_{L^2 ( \Lambda )} q(\gamma) \,,
	\end{equation}
	where~$\frac{q(\gamma)}{\|\gamma\|_{\diamondsuit}}$ vanishes for~$\| \gamma \|_{\diamondsuit} \rightarrow 0$. Thus, if we consider~$u_g^x(\gamma)$ and its derivative~\eqref{eq:marginal_derivative} as linear functionals on~$g \in L^2(\Lambda)$, i.e.,
	\begin{align*}
		u_{(\cdot)}^x(\gamma): g &\mapsto \mathcal{U}_t(\gamma) ( g ) (x) =  \langle g, k(x,\cdot, \gamma)\rangle_{L^2(\Lambda)}\,, \\ 
		D_{\beta} u_{(\cdot)}^x \big\vert_{\beta = 0} (\gamma): g &\mapsto \bb{E}^{0,x} \left[(g\circ\pi_t) M^{\gamma}_t \right] \,
	\end{align*}
	then dividing~\eqref{eq:derivative_residual} by~$\| g \|_{L^2( \Lambda )}$, taking the supremum over all~$g\neq 0$, and using~\eqref{eq:remainder_bound}, we obtain
	\begin{equation} \label{eq:dual_bound1}
		0 \leq \dfrac{ \left\| u_{(\cdot)}^{x}(\gamma) - u_{(\cdot)}^x(0) - D_{\beta} u^x_{(\cdot)}|_{\beta=0} (\gamma) \right\|_{ L^2 (\Lambda)^{\ast} } }{\| \gamma \|_{\diamondsuit}} \leq C\overline{K}^{1/2}\, \frac{q(\gamma)}{\|\gamma\|_{\diamondsuit}}.
	\end{equation}
	Using that~$\frac{|q(\gamma)|}{\|\gamma\|_{\diamondsuit}} \to 0$ as~$\|\gamma\|_{\diamondsuit}\to 0$ proves the claim.
\end{proof}

\begin{remark} \label{rem:functional}
	By equations~\eqref{eq:ineq01} and~\eqref{eq:frechet-estimate} we can explicitly use
	\begin{equation}
		q(\gamma)^{2} = \frac{1}{2} (\lambda_\sigma\sqrt{T})^4 \|\gamma \|_{\diamondsuit}^4 + 2C \sum_{\ell=4}^{\infty} \sum_{\substack{j,k\geq 2\\ j+k=\ell}} \frac{1}{j!k!} (2\lambda_\sigma\sqrt{T})^{\ell} \| \gamma \|_{\diamondsuit}^{\ell} \; . \nonumber
	\end{equation}
	Further, we get for the derivative 
	\begin{equation}
		D_{\beta} u^x_{(\cdot)}\big\vert_{\beta=0} (\gamma) = \bb{E}^{0,x}\left[ \cdot M_t^{\gamma} \right] \, . \nonumber
	\end{equation}
\end{remark}

By using that the bounds \eqref{eq:remainder_bound} and \eqref{eq:dual_bound1} are uniform with respect to~$x$, we remove the ~$x$-dependence of the previous lemma in the following result.
\begin{lem}\label{lem:kernel}
	The kernel~$k$, or more specifically, the mapping $\gamma \mapsto k_t(\cdot , \cdot ,\gamma)$ from $V$ to~$L^2(\Lambda \otimes \Lambda)$ is Fr\'echet differentiable with respect to $\gamma$.
\end{lem}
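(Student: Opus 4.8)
The plan is to build the derivative of $\gamma\mapsto k_t(\cdot,\cdot,\gamma)$ fiberwise in the first argument $x$, using the pointwise result of Lemma~\ref{lem:functional} together with the fact --- emphasized just before this statement --- that all the quantitative bounds appearing in its proof are uniform in $x\in\mathbb{X}$, and that $\mathbb{X}$ is compact, so $\Lambda(\mathbb{X})<\infty$. Throughout I would work at $\gamma=0$ and keep $\|\gamma\|_V\le\varepsilon$ so that Lemma~\ref{lem:kernel_bounds} applies. First I would use $\mathcal{U}_t(\gamma)g(x)=\langle g,k_t(x,\cdot,\gamma)\rangle_{L^2(\Lambda)}$ and the Riesz representation theorem to identify the functional $u_{(\cdot)}^x(\gamma)\in L^2(\Lambda)^\ast$ with the function $k_t(x,\cdot,\gamma)\in L^2(\Lambda)$, and its Fr\'echet derivative $D_\beta u_{(\cdot)}^x|_{\beta=0}(\gamma):g\mapsto\bb{E}^{0,x}[(g\circ\pi_t)M^\gamma_t]$ with a function $\rho^\gamma(x,\cdot)\in L^2(\Lambda)$, characterised by $\int_{\mathbb{X}}g(y)\,\rho^\gamma(x,y)\,dy=\bb{E}^{0,x}[(g\circ\pi_t)M^\gamma_t]$ for every $g\in L^2(\Lambda)$. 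This $\rho^\gamma$ is the candidate value at direction $\gamma$ of the sought derivative at $0$.

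Next I would check that $\mathcal{D}:\gamma\mapsto\rho^\gamma$ is a well-defined bounded linear operator from $V$ into $L^2(\Lambda\otimes\Lambda)$. Linearity is inherited from the linearity of $\gamma\mapsto M^\gamma_t$. For the norm bound, Cauchy--Schwarz and Lemma~\ref{lem:kernel_bounds} give $\|\rho^\gamma(x,\cdot)\|_{L^2(\Lambda)}\le\overline{K}^{1/2}\|M^\gamma_t\|_{L^2(\mu^{0,x})}$, and the It\^o isometry together with (A3) and the boundedness of $b$ on the compact set $\mathbb{X}$ gives $\|M^\gamma_t\|_{L^2(\mu^{0,x})}\le C_0\|\gamma\|_V$ with $C_0$ independent of $x$ (cf.\ the estimates of Sections~\ref{sec:prereq}--\ref{sec:frechet_expectation}). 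Since $x\mapsto\bb{E}^{0,x}[(g\circ\pi_t)M^\gamma_t]$ is measurable for each $g$ and $L^2(\Lambda)$ is separable, $x\mapsto\rho^\gamma(x,\cdot)$ is strongly measurable by Pettis' theorem; with the uniform bound and $\Lambda(\mathbb{X})<\infty$ this yields $\rho^\gamma\in L^2(\Lambda\otimes\Lambda)$ and $\|\mathcal{D}(\gamma)\|_{L^2(\Lambda\otimes\Lambda)}\le\Lambda(\mathbb{X})^{1/2}\overline{K}^{1/2}C_0\|\gamma\|_V$.

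The remaining step is the remainder estimate. Writing $r_{(\cdot)}^x(\gamma)$ for the residual functional from the proof of Lemma~\ref{lem:functional}, Tonelli's theorem and the Riesz identification give
\begin{equation*}
	\big\|k_t(\cdot,\cdot,\gamma)-k_t(\cdot,\cdot,0)-\mathcal{D}(\gamma)\big\|_{L^2(\Lambda\otimes\Lambda)}^2
	=\int_{\mathbb{X}}\big\|r_{(\cdot)}^x(\gamma)\big\|_{L^2(\Lambda)^\ast}^2\,dx
	\le\Lambda(\mathbb{X})\,C^2\overline{K}\,q(\gamma)^2,
\end{equation*}
the inequality being the uniform bound~\eqref{eq:remainder_bound}--\eqref{eq:dual_bound1}, whose constant $C$ and positive function $q$ are independent of $x$. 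Dividing by $\|\gamma\|_V^2$ and using $q(\gamma)/\|\gamma\|_V\to0$ as $\|\gamma\|_V\to0$ --- explicit from Remark~\ref{rem:functional}, where $q(\gamma)^2=O(\|\gamma\|_V^4)$ --- shows that $\mathcal{D}$ is the Fr\'echet derivative of $\gamma\mapsto k_t(\cdot,\cdot,\gamma)$ at $\gamma=0$.

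I do not expect a genuine obstacle: the substantive, $x$-uniform estimates were already established in Lemma~\ref{lem:functional} and Section~\ref{sec:transfer_ops}. The two points that need care are (i) confirming that $\rho^\gamma$ is a \emph{jointly} measurable element of $L^2(\Lambda\otimes\Lambda)$, which I would dispatch by the Pettis/separability argument above rather than by building a version of $\bb{E}^{0,x}[M^\gamma_t\mid X^0_t=\cdot]$ measurable in $(x,y)$ by hand, and (ii) double-checking that the constants in \eqref{eq:remainder_bound}, \eqref{eq:dual_bound1}, and in the bound on $\|M^\gamma_t\|_{L^2(\mu^{0,x})}$ are honestly independent of $x$ --- which is exactly the uniformity flagged before the statement. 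Everything else is Fubini/Tonelli plus $\Lambda(\mathbb{X})<\infty$.
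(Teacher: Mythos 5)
Your proof follows the same route as the paper's: identify $u^x_{(\cdot)}(\gamma)\in L^2(\Lambda)^\ast$ with $k_t(x,\cdot,\gamma)\in L^2(\Lambda)$ via the Riesz isomorphism, push the $x$-uniform remainder bound \eqref{eq:dual_bound1} through that isomorphism, and integrate over the compact $\mathbb{X}$ to get the $L^2(\Lambda\otimes\Lambda)$ bound with a factor $\Lambda(\mathbb{X})^{1/2}$. The only difference is that you explicitly verify strong measurability of $x\mapsto\rho^\gamma(x,\cdot)$ via Pettis and separability and spell out the boundedness of $\mathcal{D}$ via It\^o isometry, both of which the paper leaves implicit; this is a welcome bit of extra care but not a different argument.
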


\begin{proof}
	Considering Lemma~\ref{lem:functional}, we still need to get rid of the evaluation in~$x$. Fortunately, all relevant bounds are uniform in~$x$. First, we use the Riesz isomorphism $\mathcal{R}$ to identify~$L^2(\Lambda)^{\ast}$ with~$L^2(\Lambda)$, which leads to
	\begin{equation}
		\mathcal{R} \big( \mathcal{U}_t(\gamma) (\cdot ) (x) \big) = \mathcal{R} \big( u_{(\cdot)}^x(\gamma) \big) = k_t(x,\cdot ,\gamma), \nonumber
	\end{equation}
and further guarantees the existence of the following operator in $\gamma$,
	\begin{equation}\label{eq:kernel_derivative}
		D_{\beta} k_t(x,\cdot,\beta)|_{\beta=0}(\gamma) := \mathcal{R} \big( D_{\beta} u^x_{(\cdot)} \big\vert_{\beta=0} (\gamma) \big). \nonumber
	\end{equation}
	Note that the operator is a bounded linear operator. Since~$\mathcal{R}$ is a linear isomorphism, Lemma~\ref{lem:functional} and in particular~\eqref{eq:dual_bound1} provide
	\begin{equation}
		\big\| k_t(x, \cdot, \gamma) - k_t(x, \cdot, 0) - D_{\beta} k_t(x,\cdot ,\beta)\big\vert_{\beta=0} (\gamma) \big\|_{L^2(\Lambda)} \leq C\overline{K}^{1/2}\, | q(\gamma) |. \nonumber
	\end{equation}
	Finally, differentiability of~$\gamma \mapsto k_t(\cdot, \cdot, \gamma) \, , \, V  \rightarrow L^2(\Lambda \otimes \Lambda)$ is proven by observing that
	\begin{align}
		& \big\| k_t(\cdot , \cdot , \gamma) - k_t (\cdot , \cdot , 0) - D_{\beta} k_t(\cdot , \cdot , \beta)\big\vert_{\beta=0} (\gamma) \big\|_{L^2(\Lambda \otimes \Lambda)} \nonumber \\
		& \qquad = \left(\int_{\mathbb{X}} \big\| k_t(x , \cdot , \gamma) - k_t (x , \cdot , 0) - D_{\beta} k_t(x , \cdot , \beta)\big\vert_{\beta=0} (\gamma) \big\|_{L^2(\Lambda)}^2 \, dx \right)^{1/2} \nonumber \\
		& \qquad \leq \Lambda(\mathbb{X})^{1/2}\, C\overline{K}^{1/2}\, |q(\gamma)|  \nonumber
	\end{align}
	and noting that~$\frac{|q(\gamma)|}{\|\gamma\|_{\diamondsuit}} \to 0$ as~$\|\gamma\|_{\diamondsuit}\to 0$.
\end{proof}

We recall that the Perron--Frobenius operator and the kernel are related by~$\mathcal{P}_t(\gamma) f(y) = \int_{\mathbb{X}} k_t(x,y,\gamma) f(x) \, dx$. The differentiability of~$\mathcal{P}_t(\gamma) $ is now a simple consequence of Lemma~\ref{lem:kernel}. Given two vector spaces $U_1$ and $U_2$, we write $\bm{L}(U_1,U_2)$ to denote the space of bounded linear operators from $U_1$ to $U_2$.

\begin{theorem}\label{thm:PFO_dif}
The Perron--Frobenius operator $\mathcal{P}_t:V\to \bm{L}(L^2(\Lambda),L^2(\Lambda))$ that maps $\gamma\in V$ to $\mathcal{P}_t(\gamma)$ is Fr\'{e}chet differentiable with respect to~$\gamma$.
\end{theorem}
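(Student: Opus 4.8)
The plan is to deduce Fr\'echet differentiability of $\mathcal{P}_t$ directly from the kernel differentiability result of Lemma~\ref{lem:kernel} by exploiting the fact that the Perron--Frobenius operator is an integral operator whose Hilbert--Schmidt norm is controlled by the $L^2(\Lambda \otimes \Lambda)$-norm of its kernel. More precisely, for a kernel $\kappa \in L^2(\Lambda \otimes \Lambda)$ define the integral operator $T_{\kappa}f(y) := \int_{\mathbb{X}} \kappa(x,y) f(x)\, dx$; then $\kappa \mapsto T_{\kappa}$ is a bounded linear map from $L^2(\Lambda \otimes \Lambda)$ into $\bm{L}(L^2(\Lambda),L^2(\Lambda))$, with operator norm bounded by the Hilbert--Schmidt norm, i.e. $\|T_{\kappa}\|_{\bm{L}(L^2(\Lambda),L^2(\Lambda))} \le \|\kappa\|_{L^2(\Lambda \otimes \Lambda)}$ (a one-line Cauchy--Schwarz estimate). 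This follows because, for $f \in L^2(\Lambda)$,
\begin{equation*}
\|T_{\kappa} f\|_{L^2(\Lambda)}^2 = \int_{\mathbb{X}} \Big| \int_{\mathbb{X}} \kappa(x,y) f(x)\, dx \Big|^2 dy \le \int_{\mathbb{X}} \|\kappa(\cdot,y)\|_{L^2(\Lambda)}^2 \|f\|_{L^2(\Lambda)}^2\, dy = \|\kappa\|_{L^2(\Lambda \otimes \Lambda)}^2 \|f\|_{L^2(\Lambda)}^2.
\end{equation*}

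With this observation in hand, the argument is essentially a composition (chain rule) statement. First I would note that by Lemma~\ref{lem:kernel}, the map $K: V \to L^2(\Lambda \otimes \Lambda)$, $\gamma \mapsto k_t(\cdot,\cdot,\gamma)$, is Fr\'echet differentiable at $\gamma = 0$ with derivative $D_{\beta} k_t(\cdot,\cdot,\beta)|_{\beta=0}$. Second, the map $\Theta: L^2(\Lambda \otimes \Lambda) \to \bm{L}(L^2(\Lambda),L^2(\Lambda))$, $\kappa \mapsto T_{\kappa}$, is bounded and linear, hence Fr\'echet differentiable everywhere with $D\Theta = \Theta$. Since $\mathcal{P}_t = \Theta \circ K$, the chain rule for Fr\'echet derivatives of composed maps between Banach spaces gives that $\mathcal{P}_t$ is Fr\'echet differentiable at $\gamma = 0$, with derivative
\begin{equation*}
D_{\beta} \mathcal{P}_t(\beta)|_{\beta=0}(\gamma) f(y) = \int_{\mathbb{X}} \big( D_{\beta} k_t(x,\cdot,\beta)|_{\beta=0}(\gamma) \big)(y)\, f(x)\, dx.
\end{equation*}
For differentiability at a general base point $b$, one invokes the continuous-differentiability variant already discussed in Remark~\ref{rem:derivative_continuous} and the surrounding proposition, or simply reruns Lemma~\ref{lem:kernel} with $X^0$ replaced by $X^b$.

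Alternatively, and perhaps cleanest for the write-up, I would avoid invoking an abstract chain rule and instead argue by hand: the residual $R(\gamma) := \mathcal{P}_t(\gamma) - \mathcal{P}_t(0) - \Theta\big(D_{\beta} k_t(\cdot,\cdot,\beta)|_{\beta=0}(\gamma)\big)$ is exactly $\Theta$ applied to the kernel residual $k_t(\cdot,\cdot,\gamma) - k_t(\cdot,\cdot,0) - D_{\beta} k_t(\cdot,\cdot,\beta)|_{\beta=0}(\gamma)$, so by the Hilbert--Schmidt bound above and the estimate at the end of the proof of Lemma~\ref{lem:kernel},
\begin{equation*}
\|R(\gamma)\|_{\bm{L}(L^2(\Lambda),L^2(\Lambda))} \le \big\| k_t(\cdot,\cdot,\gamma) - k_t(\cdot,\cdot,0) - D_{\beta} k_t(\cdot,\cdot,\beta)|_{\beta=0}(\gamma) \big\|_{L^2(\Lambda \otimes \Lambda)} \le \Lambda(\mathbb{X})^{1/2} C \overline{K}^{1/2} |q(\gamma)|,
\end{equation*}
and dividing by $\|\gamma\|_V$ and using $|q(\gamma)|/\|\gamma\|_V \to 0$ as $\|\gamma\|_V \to 0$ finishes the proof. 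One should also remark that the proposed derivative is indeed a bounded linear operator from $V$ into $\bm{L}(L^2(\Lambda),L^2(\Lambda))$, which again follows from linearity of $\Theta$ and the Hilbert--Schmidt bound together with boundedness of the kernel derivative established in Lemma~\ref{lem:kernel}.

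Honestly, there is no serious obstacle here: the entire difficulty of the theorem has already been absorbed into Lemma~\ref{lem:kernel} (and, upstream, into Theorem~\ref{thm:frechet_expectation} and Lemma~\ref{lem:kernel_bounds}). The only point requiring a modicum of care is the standard fact that passing from kernel to operator is norm-decreasing from the Hilbert--Schmidt norm to the operator norm; everything else is bookkeeping with the chain rule and the residual estimate. I would therefore keep the proof to a few lines, citing Lemma~\ref{lem:kernel} and the Cauchy--Schwarz/Hilbert--Schmidt inequality.
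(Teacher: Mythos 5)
Your proposal is correct and follows essentially the same route as the paper: both proofs observe that the kernel-to-operator map $\kappa \mapsto T_\kappa$ is linear and bounded from $L^2(\Lambda\otimes\Lambda)$ to $\bm{L}(L^2(\Lambda),L^2(\Lambda))$ via the Cauchy--Schwarz (Hilbert--Schmidt) estimate, and then invoke Lemma~\ref{lem:kernel} together with the chain rule. Your explicit residual bound $\|R(\gamma)\|\le \Lambda(\mathbb{X})^{1/2}C\overline{K}^{1/2}|q(\gamma)|$ simply makes the chain-rule step concrete; it matches the estimate already appearing at the end of the proof of Lemma~\ref{lem:kernel}.
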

 
\begin{proof}
	Obviously, the mapping~$k_t(\cdot,\cdot,\gamma)\mapsto \mathcal{P}_t(\gamma)$ is linear. If this mapping is bounded, then it is arbitrarily often continuously differentiable. We consider $\mathcal{P}_t(\gamma) \in \bm{L}(L^2(\Lambda), L^2(\Lambda))$ with the induced norm. By the Cauchy--Schwarz inequality we obtain
	\begin{align*}
		\sup_{f\neq 0} \| f\|_{L^2(\Lambda)}^{-1} \| \mathcal{P}_t(\gamma) f\|_{L^2(\Lambda)} &= \sup_{f\neq 0} \| f \|_{L^2(\Lambda)}^{-1} \Big \|\! \int_{\mathbb{X}} k_t(x,\cdot,\gamma ) f(x) \, dx \Big \|_{L^2(\Lambda)} \\
			&= \sup_{f\neq 0} \| f \|_{L^2(\Lambda)}^{-1} \left(\int_{\mathbb{X}} \left| \int_{\mathbb{X}} k_t(x,y,\gamma) f(x) \, dx \right|^2 \, dy \right)^{\frac{1}{2}}\\
			& \leq \sup_{f\neq 0} \| f \|_{L^2(\Lambda)}^{-1} \left( \int_{\mathbb{X}} \| k_t(\cdot , y ,\gamma)\|^2_{L^2(\Lambda)} \| f \|^2_{L^2(\Lambda)} \, dy  \right)^{\frac{1}{2}}\\
			&=  \left(\iint_{\mathbb{X}\times \mathbb{X}} |k_t(x,y,\gamma)|^2 \, dy \, dx \right)^{\frac{1}{2}}= \| k_t(\cdot , \cdot,\gamma) \|_{L^2(\Lambda \otimes \Lambda)}
	\end{align*}
	and conclude
	\begin{equation*}
		\| \mathcal{P}_t(\gamma) \|_{ \bm{L} ( L^2(\Lambda), L^2(\Lambda) ) } \leq \| k_t(\cdot ,\cdot ,\gamma ) \|_{L^2(\Lambda \otimes \Lambda)}.
	\end{equation*}
	Now, Lemma~\ref{lem:kernel} and the chain rule for~$ \gamma \mapsto k_t(\cdot,\cdot,\gamma) \mapsto \mathcal{P}_t(\gamma)$ implies Fr\'echet differentiability of~$\mathcal{P}_t(\gamma)$ with respect to~$\gamma$.
\end{proof}

\begin{remark}
	We could prove the differentiability of the Koopman operator~$\mathcal{U}_t(\gamma)$ analogously to Theorem~\ref{thm:PFO_dif}. Alternatively, we can use the reasoning from Lemma~\ref{lem:chain_rule} and Remark~\ref{rem:chain_rule} to deduce the differentiability of the Koopman operator with respect to~$\gamma$.
\end{remark}


\section{Differentiability of isolated eigenvalues}
\label{sec:EVD}

With a classical result stated in e.g.,~\cite[p.2]{Kloeckner2017} and originally derived in~\cite{Rosenbloom1955} using the implicit function theorem, we can state the following result on smooth drift dependence of the eigenvalues and eigenvectors of the transfer operators. Note that all non-zero eigenvalues are automatically isolated, as the transfer operator is compact; see \cite[Lemma~30]{FrKo17} and the remark following the proof (note that this result assumes stronger regularity conditions on the data). Recall that the space $V$ from Section \ref{sec:prereq} equipped with the norm $\Vrt{\cdot}_V$ defined in \eqref{eq:norm_on_class_V} is a Banach space, cf.~Lemma~\ref{app:C-vec} in the appendix.

\begin{theorem} \label{thm:spectrum_dif}
Let us assume that $\lambda^0 $ is a simple and isolated eigenvalue with eigenvector~$f^0$ of the unperturbed linear and bounded operator $\mathcal{P}_t(0)$ that belongs to $\bm{L} \big( L^2(\Lambda),L^2(\Lambda) \big)$. Then, there exists a neighborhood~$U \subset V$ of the constant function~$0$ such that for all~$\gamma \in U$ the operators~$\mathcal{P}_t(\gamma)$ have an isolated eigenvalue~$\lambda^{\gamma}$ close to~$\lambda^0$. Further, the mappings~$\gamma \mapsto \lambda^{\gamma}$ and~$\gamma \mapsto f^{\gamma}$ that send the function $\gamma$ to its corresponding eigenvalue and eigenvector respectively are continously Fr\'echet differentiable. In the case of the eigenvector, which is only unique up to scaling, the scaling can be chosen such that the given map is differentiable.
\end{theorem}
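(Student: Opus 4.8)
The plan is to obtain Theorem~\ref{thm:spectrum_dif} from the classical perturbation result cited above (\cite{Rosenbloom1955}; see also \cite{Kloeckner2017}), whose proof is an application of the implicit function theorem in Banach spaces, so that the substantive part here is only to verify its hypotheses. There are three. First, $(V,\Vrt{\cdot}_V)$ is a Banach space, which is Lemma~\ref{app:C-vec}. Second, the map $\gamma\mapsto\mathcal{P}_t(\gamma)\in\bm{L}(L^2(\Lambda),L^2(\Lambda))$ is continuously Fr\'echet differentiable: Fr\'echet differentiability is Theorem~\ref{thm:PFO_dif}, and continuity of the differential follows via the chain rule $\gamma\mapsto k_t(\cdot,\cdot,\gamma)\mapsto\mathcal{P}_t(\gamma)$ from the continuity statement for the differential of expected path functionals in Section~\ref{sec:frechet_expectation} (the proposition following Remark~\ref{rem:derivative_continuous}), applied to the time-$t$ observables of Remark~\ref{rem:time_t_observable}. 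Third, $\lambda^0$ is a simple isolated eigenvalue, which is assumed. Throughout I would work over $\mathbb{C}$, complexifying $L^2(\Lambda)$ and $V$ if $\lambda^0\notin\R$.

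For the implicit function theorem I would fix a bounded functional $\ell\in L^2(\Lambda)^{\ast}$ with $\ell(f^0)\neq0$ (e.g.\ $\ell=\langle\,\cdot\,,f^0\rangle_{L^2(\Lambda)}$) and set
\[
 F:V\times\mathbb{C}\times L^2(\Lambda)\longrightarrow L^2(\Lambda)\times\mathbb{C},\qquad
 F(\gamma,\lambda,f)=\big(\mathcal{P}_t(\gamma)f-\lambda f,\ \ell(f)-\ell(f^0)\big),
\]
so that $F(0,\lambda^0,f^0)=0$ and, by the previous paragraph, $F$ is $C^1$ near this point. Its partial differential in $(\lambda,f)$ at $(0,\lambda^0,f^0)$ is the bounded operator $(\dot\lambda,\dot f)\mapsto\big((\mathcal{P}_t(0)-\lambda^0 I)\dot f-\dot\lambda\,f^0,\ \ell(\dot f)\big)$, and the crucial point is that this operator is a bijection, hence a topological isomorphism by the open mapping theorem. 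Here the hypothesis enters: since $\lambda^0$ is a simple isolated eigenvalue of $\mathcal{P}_t(0)$, the Riesz spectral projection $\Pi$ associated to $\lambda^0$ has rank one with range $\mathrm{span}\{f^0\}$, the splitting $L^2(\Lambda)=\mathrm{span}\{f^0\}\oplus\ker\Pi$ is $\mathcal{P}_t(0)$-invariant, and $\mathcal{P}_t(0)-\lambda^0 I$ maps the closed subspace $\ker\Pi$ bijectively onto itself. Given $(g,c)$, applying $\Pi$ to the first component equation determines $\dot\lambda$ uniquely from $-\dot\lambda f^0=\Pi g$; applying $I-\Pi$ determines the $\ker\Pi$-component of $\dot f$ uniquely by that invertibility; and the scalar equation $\ell(\dot f)=c$ then fixes the $f^0$-component of $\dot f$ because $\ell(f^0)\neq0$. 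This establishes the bijection.

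The implicit function theorem then yields a neighborhood $U\subset V$ of $0$ and $C^1$ maps $\gamma\mapsto\lambda^\gamma\in\mathbb{C}$, $\gamma\mapsto f^\gamma\in L^2(\Lambda)$ reducing to $(\lambda^0,f^0)$ at $\gamma=0$ and satisfying $\mathcal{P}_t(\gamma)f^\gamma=\lambda^\gamma f^\gamma$ together with $\ell(f^\gamma)=\ell(f^0)$; this last normalization is precisely the choice of scaling that makes $\gamma\mapsto f^\gamma$ well defined and differentiable, as announced in the theorem. It remains only to note that, after possibly shrinking $U$, $\lambda^\gamma$ stays isolated: this is the standard stability of a spectral gap under operator-norm-small perturbations (upper semicontinuity of the spectrum), together with compactness of $\mathcal{P}_t(\gamma)$, which forces every nonzero spectral value to be an isolated eigenvalue.

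I expect the principal obstacle to be the bijectivity of the partial differential $D_{(\lambda,f)}F$ at the base point, i.e.\ correctly translating ``simple and isolated'' into invertibility of $\mathcal{P}_t(0)-\lambda^0 I$ on the complementary closed invariant subspace via the Riesz projection. On the technical side, the delicate point is to confirm that $\gamma\mapsto\mathcal{P}_t(\gamma)$ is genuinely $C^1$ rather than merely Fr\'echet differentiable, since only then does the $C^1$ implicit function theorem apply and deliver \emph{continuously} differentiable eigendata; everything else is routine once these two points are in place.
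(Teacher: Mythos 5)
Your proposal is correct and follows essentially the same route as the paper: both reduce the theorem to the Rosenbloom/Kl\"ockner analytic-perturbation result (which is itself an implicit-function-theorem argument), verify that $(V,\Vrt{\cdot}_V)$ is a Banach space and that $\gamma\mapsto\mathcal{P}_t(\gamma)$ is continuously Fr\'echet differentiable, and then compose. The only difference is one of exposition: the paper treats the Rosenbloom/Kl\"ockner result as a black box --- citing that $m_1:\mathcal{P}_t(\gamma)\mapsto\lambda^\gamma$ and $m_2:\mathcal{P}_t(\gamma)\mapsto f^\gamma$ are analytic in a neighborhood of $\mathcal{P}_t(0)$ in $\bm{L}(L^2(\Lambda),L^2(\Lambda))$ and then composing with $\ell:\gamma\mapsto\mathcal{P}_t(\gamma)$ --- whereas you unpack that black box and re-derive it via the explicit map $F(\gamma,\lambda,f)=(\mathcal{P}_t(\gamma)f-\lambda f,\,\ell(f)-\ell(f^0))$, the Riesz spectral projection, and the bijectivity of the partial differential. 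That extra detail is sound and would be a welcome supplement. You are also more careful than the paper on one point: Theorem~\ref{thm:PFO_dif} as stated asserts only Fr\'echet differentiability of $\gamma\mapsto\mathcal{P}_t(\gamma)$, while the implicit-function-theorem machinery requires $C^1$ regularity; you correctly trace continuity of the derivative back through the chain $\gamma\mapsto k_t(\cdot,\cdot,\gamma)\mapsto\mathcal{P}_t(\gamma)$ to the proposition following Remark~\ref{rem:derivative_continuous}, whereas the paper's proof simply asserts continuous differentiability without flagging this.
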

As with Theorem \ref{thm:PFO_dif}, one can show the analogous result for the Koopman operator.
\begin{proof}
	By the Fr\'echet differentiability of~$\gamma \mapsto \mathcal{P}_t(\gamma)$ from Theorem~\ref{thm:PFO_dif} we have
	\begin{equation}
		\mathcal{P}_t(\gamma) = \mathcal{P}_t(0) + \mathcal{Q}_t(\gamma),\quad \mathcal{Q}_t(\gamma) = \mathcal{O}\left( \| \gamma \|_V \right). \nonumber
	\end{equation}
Now, we can consider~$\mathcal{P}_t(\gamma)$  as an additive perturbation of~$\mathcal{P}_t(0)$ in operator space~$\bm{L}$. Using the idea of \cite{Rosenbloom1955}, nicely stated in~\cite[p.1]{Kloeckner2017}, we can deduce the existence of $\bm{U} \subset \bm{L}$, a neighborhood of $\mathcal{P}_t(0)$, and mappings $m_1 \, : \, \mathcal{P}_t(\gamma) \mapsto \lambda^{\gamma}$ and $m_2\, : \, \mathcal{P}_t(\gamma) \mapsto f^{\gamma}$ such that $m_1$ and $m_2$ are analytical on $\bm{U}$.
The proof in \cite{Kloeckner2017} utilizes the implicit function theorem to conclude this. By Theorem~\ref{thm:PFO_dif}, $\ell \, : \, \gamma \mapsto \mathcal{P}_t(\gamma)$ is a continuously differentiable mapping. Thus, there exists a neighborhood $U\subset V$ of $0$ such that the mappings $n_1= m_1 \circ \ell \, : U \rightarrow \R \, , \gamma \mapsto \lambda^{\gamma}$ and $n_2 = m_2 \circ \ell \, : \, U \rightarrow L^2(\Lambda) \, , \, \gamma \mapsto f^{\gamma}$ are continuously Fr\'echet differentiable.
\end{proof}

\begin{remark}[Changing reference measures] \label{rem:changeing_reference_measure}
For non-stationary dynamics it is natural to consider $\mathcal{P}_t \, : \, L^p(\mu_0) \rightarrow L^p(\mu_t)$, cf.~\cite{Fro13,Den17}, where~$\mu_0$ is some given initial distribution and~$\mu_t$ is the push-forward of~$\mu_0$ by the dynamics.\footnote{The reason for this is that the transfer operator is a well-defined contraction between these spaces for any~$1\le p\le\infty$, even for purely deterministic dynamics; see~\cite[Theorem~5, pp.~21]{Den17}.} The $\gamma$-dependence of~$\mu_t = \mu_t^{\gamma}$, however, poses the problem that it is not trivial to compare the different $\mathcal{P}_t({\gamma}) f \in L^p (\mu_t^{\gamma})$ with one another. Thus, the natural approach here is to work on the ``common space''~$L^p(\Lambda)$. Results that consider the case where the perturbed operators map to different spaces~\cite{Kol06, ZhPa07, MNP13} are more involved, and beyond the scope of the present paper.
\end{remark}

Finally, let us discuss two applications that illustrate the significance of the preceding results in the context of \emph{dynamical systems}.
\paragraph{Singular values and functions.}
As both $\mathcal{P}_t(\gamma)$ and $\mathcal{U}_t(\gamma)$ are differen\-tiable with respect to~$\gamma$ by Theorem~\ref{thm:PFO_dif}, it follows that their concatenations $\mathcal{P}_t(\gamma)\mathcal{U}_t(\gamma)$ and $\mathcal{U}_t(\gamma)\mathcal{P}_t(\gamma)$ are also differentiable with respect to~$\gamma$. However, since $\mathcal{P}_t(\gamma)$ and~$\mathcal{U}_t(\gamma)$ are adjoints (cf.~\eqref{eq:operators} or~\cite{LaMa94}), we have the differentiability of~$\mathcal{P}_t(\gamma)^*\mathcal{P}_t(\gamma)$ and~$\mathcal{P}_t(\gamma)\mathcal{P}_t(\gamma)^*$, thus of the singular values, and thus of the right and left singular vectors of~$\mathcal{P}_t(\gamma)$ and~$\mathcal{U}_t(\gamma)$. These are of particular interest for non-autonomous systems, as studies by Froyland et al.~\cite{FrSaMo10, Fro13} have shown that they can be connected to finite-time persistent dynamical structures called \emph{coherent sets}. It should be noted that these studies consider the operator~$\mathcal{P}_t$ with respect to changing reference measures, as discussed above in Remark~\ref{rem:changeing_reference_measure}. Nevertheless, our results apply to the large class of systems with divergence-free velocity fields (modeling incompressible flows) and with diffusion coefficients independent of the spatial variable~$x$. For such systems the Lebesgue measure is an invariant reference measure.

\paragraph{Periodically forced systems.}
For periodically forced non-autonomous systems, i.e., where~$b(t+T,\cdot) = b(t,\cdot)$, $\gamma(t+T,\cdot) = \gamma(t,\cdot)$, and~$\sigma(t+T,\cdot) = \sigma(t,\cdot)$ for some~$T>0$, Theorem~\ref{thm:spectrum_dif} shows smooth drift-dependence of the long-term behavior, e.g., of ergodic averages of the type
\[
\bar{g}^{\gamma}(x) := \lim_{t\to\infty} \frac{1}{t}\int_0^t g(\tau\text{ mod } T,X^{\gamma}_{\tau})\,d\tau,\quad X^{\gamma}_0=x\,,
\]
where~$g:[0,T)\times\mathbb{X}\to \Reals$.
To see this, let~$\mathcal{P}_{t_0,t_1}(\gamma)$ denote the Perron--Frobenius operator for the SDE~\eqref{eq:perturbed_sde} running from time~$t_0$ to~$t_1$, and let~$\{f^{\gamma}_t\}_{t\in [0,T)}$ be the \emph{stationary family of densities}, i.e.,~$\mathcal{P}_{s,t}(\gamma)f^{\gamma}_s = f^{\gamma}_{t\text{ mod }T}$. In particular,~$\mathcal{P}_{s,s+T}(\gamma) f^{\gamma}_s = f^{\gamma}_s$, and the eigenfunctions are differentiable with respect to~$\gamma$. As the process~$X^{\gamma}$ is clearly irreducible due to non-degenerate noise, the dominant eigenvalue~$1$ of~$\mathcal{P}_{s,s+T}(\gamma)$ is simple. By Birkhoff's individual ergodic theorem
\[
\bar{g}^{\gamma}(x) = \frac{1}{T}\int_0^T\!\!\! \int_{\mathbb{X}} g(\tau,y)f^{\gamma}_{\tau}(y)\,dy\,d\tau
\]
$\bb{P}^x$-almost surely for~$\Lambda$-a.e.~$x$, which is continuously differentiable in~$\gamma$.

\section*{Acknowledgments}

PK is supported by the Deutsche Forschungsgemeinschaft (DFG) through the CRC 1114 ``Scaling Cascades in Complex Systems'', project A01, and by the Einstein Foundation Berlin (Einstein Center ECMath). HCL is supported by the Freie Universit\"{a}t Berlin within the Excellence Initiative of the DFG. MP is supported by the DFG through the Priority Programme SPP 1881 ``Turbulent Superstructures''.


\appendix
\section{Appendix}
\subsection{Additional Material}

\begin{lem}\label{lem:chain_rule}
	Let $U,V,W$ and $Z$ be real or complex Banach spaces with their respective norms. Consider a family of operators $(A(z))_{z\in Z} \subset L(U,V)$, a linear and bounded operator $j \, : L(U,V) \rightarrow W$ and the mapped family~$(j(A(z)))_{z\in Z}$. Now, if $z \mapsto A(z)$ is Fr\'echet differentiable then $z \mapsto j(A(z)) =: B(z)$ is Fr\'echet as well, and it holds for the derivative
	\begin{equation*}
	D_{z} B|_{z=z_0} = j \circ (D_{z} A|_{z=z_0}) \; .
	\end{equation*}
\end{lem}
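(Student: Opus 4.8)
The plan is to verify directly that the candidate derivative $j \circ (D_z A|_{z=z_0})$ satisfies the defining limit of the Fr\'echet derivative of $B = j \circ A$, exploiting that $j$ is linear and bounded (hence its own derivative and trivially continuous). First I would record that $j \circ (D_z A|_{z=z_0})$ is indeed a bounded linear operator from $Z$ to $W$: it is the composition of the bounded linear operator $D_z A|_{z=z_0} \in \bm{L}(Z, \bm{L}(U,V))$ with the bounded linear operator $j$, so it is an admissible candidate for the Fr\'echet derivative.

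Next, by Fr\'echet differentiability of $z \mapsto A(z)$ at $z_0$, write
\[
	A(z_0 + h) - A(z_0) - (D_z A|_{z=z_0})(h) = R(h), \qquad \frac{\|R(h)\|_{\bm{L}(U,V)}}{\|h\|_Z} \longrightarrow 0 \quad \text{as } \|h\|_Z \to 0.
\]
Applying $j$ to both sides and using linearity of $j$ gives
\[
	B(z_0 + h) - B(z_0) - \big(j \circ (D_z A|_{z=z_0})\big)(h) = j(R(h)).
\]
Then boundedness of $j$ yields $\|j(R(h))\|_W \le \|j\|\,\|R(h)\|_{\bm{L}(U,V)}$, so dividing by $\|h\|_Z$ and letting $\|h\|_Z \to 0$ forces the left-hand side to vanish in norm. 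This is exactly the statement that $B$ is Fr\'echet differentiable at $z_0$ with derivative $j \circ (D_z A|_{z=z_0})$, and since $z_0 \in Z$ was arbitrary, the formula holds throughout.

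There is essentially no obstacle here; the only points requiring a word of care are (i) checking that $D_z A|_{z=z_0}(h) \in \bm{L}(U,V)$ so that $j$ may legitimately be applied to it, which is immediate from the codomain of the Fr\'echet derivative, and (ii) noting that the composition of bounded linear maps is bounded and linear, so the candidate derivative lies in $\bm{L}(Z, W)$. If one additionally wants continuity of $z_0 \mapsto D_z B|_{z=z_0}$ (used implicitly later for continuous differentiability), it follows from post-composition with the fixed bounded operator $j$: $\|D_z B|_{z=z_0} - D_z B|_{z=z_1}\| \le \|j\|\,\|D_z A|_{z=z_0} - D_z A|_{z=z_1}\|$, so continuity of $z \mapsto D_z A$ transfers to $z \mapsto D_z B$. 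The same argument, applied inductively, shows that higher-order differentiability (if present) is likewise preserved, which is what is invoked in the proof of Theorem~\ref{thm:PFO_dif}.
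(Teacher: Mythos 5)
Your proof is correct and follows essentially the same route as the paper: the paper simply cites the chain rule for Fr\'echet derivatives together with the fact that a bounded linear map is its own derivative, while you unwind that chain rule explicitly for the special case of post-composition with $j$. The extra remark on continuity (and higher-order differentiability) of the derivative transferring via $\|j\|$ is a sensible addition that matches how the lemma is used later.
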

\begin{remark}\label{rem:chain_rule}
	More important than the simple and short proof given below are some special cases we are interested in. For $W = L(V^{\ast}, U^{\ast})$ and the linear isometry $j\, : \, A \mapsto A^{\ast}$ the above lemma implies the differentiability of the adjoint family $(A^{\ast}(z))_{z \in Z}$. If in this special case $U,V$ are also reflexive spaces, then the reverse direction of the lemma holds as well.
\end{remark}
\begin{proof}
	The claims simply follow from the chain rule for Fr\'echet derivatives and the rule for differentiating linear operators.
\end{proof}

The next result considers existence and uniqueness of strong solutions. It is adapted from~\cite[Section 4.4.2, Corollary]{liptsershiryaevpartI}.
\begin{theorem}\label{thm:theorem4_6_cor}
	Consider the stochastic differential equation
	\begin{equation}\label{eq:4_134}
		dX_t=b(t,X_t)dt+\sigma(t,X_t)dW_t,\quad X_0=\eta,
	\end{equation}
	where the functions $b:[0,T]\times \Reals^d\to\Reals^d$, $\sigma:[0,T]\times\Reals^d\to \Reals^{d\times d}$ satisfy the assumptions (A1) \eqref{eq:A1-Lip} and (A2) \eqref{eq:A2-Growth}. If $\Prob(\sum_{i=1}^{n}\abs{\eta_i}<\infty)=1$, then \eqref{eq:4_134} has a unique strong solution.
\end{theorem}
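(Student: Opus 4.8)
The plan is to reduce this statement to the classical Cauchy--Lipschitz existence and uniqueness theory for stochastic differential equations with globally Lipschitz and linearly growing coefficients, but with a deterministic initial condition replaced by an $\mathcal{F}_0$-measurable random variable that is almost surely finite. First I would recall the standard result: if $b,\sigma$ satisfy (A1) and (A2) and the initial condition is deterministic (say $X_0 = x \in \R^d$), then \eqref{eq:4_134} admits a unique strong solution; this is the textbook Picard iteration argument combined with Gronwall's inequality and the It\^o isometry, and it is precisely the content of \cite[Section 4.4]{liptsershiryaevpartI}. The task here is to lift this to random initial data $\eta$ with $\Prob(\sum_{i=1}^n |\eta_i| < \infty) = 1$.

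The key steps, in order, would be as follows. Step one: observe that the event $A := \{\sum_{i=1}^n |\eta_i| < \infty\}$ has full probability, and decompose $\Omega$ (the underlying probability space) according to the level sets of $\eta$, or more cleanly, approximate $\eta$ by truncations $\eta^{(m)} := \eta \,\mathds{1}_{\{|\eta| \le m\}}$, each of which is bounded and $\mathcal{F}_0$-measurable. Step two: for a bounded $\mathcal{F}_0$-measurable initial condition, establish existence and uniqueness by a conditioning argument --- one freezes $\eta = y$, invokes the deterministic-initial-condition result to get a solution $X^y$, and then checks that $\omega \mapsto X^{\eta(\omega)}(\omega)$ is adapted and solves the equation; measurable dependence of the solution on the initial datum (which itself follows from the Picard construction being a limit of measurable maps, or from a Kolmogorov-type continuity estimate) is what makes this gluing legitimate. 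Step three: pass from the truncations $\eta^{(m)}$ to $\eta$ using a localization/stopping argument: on the event $\{|\eta| \le m\}$, which increases to $A$ as $m \to \infty$, the solution driven by $\eta^{(m)}$ agrees with the one driven by $\eta^{(m+1)}$ up to indistinguishability (by pathwise uniqueness applied on that event), so the solutions are consistent and define a unique process on $A$, hence $\Prob$-almost surely. Step four: assemble uniqueness globally --- if $X$ and $\tilde X$ both solve \eqref{eq:4_134} with the same $\eta$, then restricting to $\{|\eta| \le m\}$ and using pathwise uniqueness for bounded initial data gives $X = \tilde X$ on each such event, hence on $A$, hence $\Prob$-a.s.

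I expect the main obstacle to be the careful handling of the conditioning/measurability argument in step two: making precise the statement "freeze $\eta = y$, solve, then plug $\eta$ back in" requires either a measurable selection theorem or --- more in the spirit of \cite{liptsershiryaevpartI} --- redoing the Picard iteration directly with the random initial condition and verifying at each iterate that the $L^2$-type a priori bounds (which normally use $\E|X_0|^2 < \infty$) can be replaced by the weaker almost-sure finiteness via a stopping-time localization that truncates $|\eta|$. The rest (Gronwall, It\^o isometry, triangle inequalities) is entirely routine and I would not spell it out. Since the statement is quoted as a corollary adapted from \cite[Section 4.4.2]{liptsershiryaevpartI}, the cleanest exposition is simply to cite that reference for the deterministic case and then present steps one, three, and four --- the localization in $|\eta|$ --- as the short additional argument, flagging that all constants in the estimates are uniform on the truncation events.
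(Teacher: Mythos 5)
The paper does not actually prove Theorem~\ref{thm:theorem4_6_cor}; it states it as an adaptation of the Corollary in \cite[Section~4.4.2]{liptsershiryaevpartI} and gives no argument, so there is no paper-internal proof to compare against. Your reconstruction is, in substance, the standard proof that lives in the cited reference: establish existence and uniqueness for initial data with finite second moment via Picard iteration, Gronwall, and the It\^o isometry, then pass to merely a.s.~finite $\eta$ by truncating $\eta^{(m)} = \eta\,\mathds{1}_{\{|\eta|\le m\}}$, using pathwise uniqueness on $\{|\eta|\le m\}$ to show the truncated solutions are consistent, and taking the union over $m$. Steps one, three, and four of your plan are exactly this and are correct.

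The one place where your plan takes an unnecessary detour is step two. You propose handling a bounded $\mathcal{F}_0$-measurable $\eta$ by freezing $\eta = y$, solving with deterministic initial condition, and gluing via a measurable-selection or measurable-dependence argument. This can be made to work, but it introduces genuine technicalities (jointly measurable versions of $y\mapsto X^y$, a selection theorem) that are entirely avoidable: the Picard iteration in \cite[Theorem~4.6]{liptsershiryaevpartI} is run directly with an $\mathcal{F}_0$-measurable random initial condition satisfying $\E|\eta|^2<\infty$, which a bounded $\eta$ certainly does, and the $L^2$-a priori bounds and Gronwall argument go through verbatim. You flag this alternative yourself, and it is the one that matches the reference; I would simply drop the conditioning route. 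One cosmetic note: the index $n$ in $\sum_{i=1}^n|\eta_i|$ is inherited from the reference's general $n$-dimensional statement and should read $d$ in the paper's setting, but this does not affect your argument.
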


\begin{lem}\label{lem:A4}
	The assumptions (A1), (A2) and (A3)  imply the property (A4).
\end{lem}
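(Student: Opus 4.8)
The plan is to argue that, for $\bb{P}^x$-almost every sample path, the integrand appearing in~\eqref{eq:P_bound} is a \emph{bounded} function of $s$ on the compact interval $[0,T]$; integrability over $[0,T]$, and hence the probability-one statement~\eqref{eq:P_bound}, then follows at once. Throughout, $\gamma$ is taken to lie in the class $\mathcal{C}$ (equivalently, $\gamma$ is globally Lipschitz with linear growth, as in the remark preceding the lemma).

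First I would record that, under (A1) and (A2), Theorem~\ref{thm:theorem4_6_cor} gives a well-defined strong solution $X^0$ of~\eqref{eq:original_sde}; since $\Omega = C([0,T];\Reals^d)$, its paths are continuous, so $R(\omega):=\sup_{0\le s\le T}\abs{X^0_s(\omega)}$ is finite for $\bb{P}^x$-a.e.\ $\omega$. Next I would extract from (A3) a \emph{uniform} operator-norm bound on $\sigma^{-1}$: the ellipticity inequality~\eqref{eq:uniform_ellipticity} says that every eigenvalue of the symmetric matrix $\sigma\sigma^\top(t,y)$ is at least $\lambda_\sigma^{-2}$, hence every eigenvalue of $(\sigma\sigma^\top)^{-1}(t,y)$ is at most $\lambda_\sigma^{2}$; combined with the identity $(\sigma^{-1})^\top\sigma^{-1}=(\sigma\sigma^\top)^{-1}$ this gives $\abs{\sigma^{-1}(t,y)v}^2 = v^\top(\sigma\sigma^\top)^{-1}(t,y)\,v \le \lambda_\sigma^{2}\abs{v}^2$ for all $(t,y)\in[0,T]\times\Reals^d$ and all $v\in\Reals^d$.

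Then I would combine this with the linear-growth halves of (A2) (for $b$) and of $\gamma\in\mathcal{C}$ (for $\gamma$): these provide a constant $L'<\infty$, independent of $s$, with $\abs{b(s,y)}+\abs{\gamma(s,y)}\le L'(1+\abs{y})$, whence
\[
\Big(\abs{\sigma^{-1}_s(b_s+\gamma_s)}^2+\abs{\sigma^{-1}_sb_s}^2\Big)(X^0_s)\ \le\ 2\lambda_\sigma^{2}(L')^{2}\big(1+\abs{X^0_s}\big)^2\ \le\ 2\lambda_\sigma^{2}(L')^{2}\big(1+R(\omega)\big)^2
\]
uniformly in $s\in[0,T]$. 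The integrand on the left is measurable in $s$, being the composition of the (jointly measurable) coefficients $b,\sigma,\gamma$ with the continuous path $s\mapsto X^0_s(\omega)$, so integrating the bound over $[0,T]$ yields
\[
\int_0^T\Big(\abs{\sigma^{-1}_s(b_s+\gamma_s)}^2+\abs{\sigma^{-1}_sb_s}^2\Big)(X^0_s)\,ds\ \le\ 2\lambda_\sigma^{2}(L')^{2}\,T\,\big(1+R(\omega)\big)^2\ <\ \infty
\]
for $\bb{P}^x$-a.e.\ $\omega$, which is precisely~\eqref{eq:P_bound}.

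There is no genuine obstacle here; the only points that need care are that the second step uses the \emph{quantitative} bound in (A3), not merely invertibility of $\sigma$, and that the linear-growth estimate is applied to $\gamma$ as well as to $b$, which is exactly why one needs $\gamma\in\mathcal{C}$. If one instead wanted $L^1(\bb{P}^x)$-integrability of the integral rather than only the almost-sure finiteness asserted in (A4), the last step could be replaced by the standard moment estimate $\bb{E}^x\big[\sup_{0\le s\le T}\abs{X^0_s}^2\big]\le C(1+\abs{x}^2)e^{CT}<\infty$, proved via It\^o's formula, the Burkholder--Davis--Gundy (or Doob) inequality, and Gr\"onwall's lemma; for (A4) as stated, the pathwise bound above suffices.
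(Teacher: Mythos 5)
Your proof is correct and follows the same two-step structure as the paper's: first bound the integrand by $\const\,(1+\abs{X^0_s}^2)$ using (A3) and the linear-growth halves of (A2) and $\gamma\in\mathcal{C}$, then reduce to showing $\sup_{0\le s\le T}\abs{X^0_s}<\infty$ $\bb{P}^x$-almost surely. The first step is essentially identical in both versions (yours gives a sharper constant, the paper uses $12\lambda_\sigma^2 L^2$, but the constants are immaterial). The difference lies in the second step: you appeal directly to the existence theorem for strong solutions under global Lipschitz and linear-growth conditions, which guarantees a continuous, non-exploding solution on all of $[0,T]$, and then invoke compactness of $[0,T]$. The paper instead routes through an explicit non-explosion criterion (the Khas'minskii-type corollary from Narita), verifies its hypotheses with the same growth estimates, and then concludes path-boundedness via the extreme value theorem. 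The paper's detour is strictly unnecessary in the globally Lipschitz case, but it was likely chosen deliberately because the remark immediately preceding the lemma advertises that (A4) can also be derived when the coefficients are merely \emph{locally} Lipschitz, where one cannot take non-explosion for granted and a criterion like Narita's becomes essential; your more direct route does not visibly generalize to that setting.

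One small imprecision worth flagging: you write ``since $\Omega=C([0,T];\Reals^d)$, its paths are continuous.'' The continuity of $X^0$'s paths is not a consequence of the choice of sample space for the Wiener process; it is part of the definition of a strong solution (and of the conclusion of Theorem~\ref{thm:theorem4_6_cor}). The statement you want is simply that the existence theorem produces an adapted process with $\bb{P}^x$-a.s.\ continuous paths on all of $[0,T]$, so that $R(\omega)=\sup_{0\le s\le T}\abs{X^0_s(\omega)}<\infty$ a.s.\ by compactness of $[0,T]$. With that phrasing the argument is airtight.
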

\begin{proof}
Observe that, for arbitrary $(s,y)\in[0,T]\times\bb{R}^d$,
\begin{align*}
 \left(\abs{\sigma^{-1}_s(b_s+\gamma_s)}^2+\abs{\sigma^{-1}_sb_s}^2\right)(y)&\leq \lambda_\sigma^{2}\abs{b_s+\gamma_s}^2(y)+\lambda_\sigma^{2}\abs{b_s}^2(y)
 \\
 &\leq 3\lambda_\sigma^{2}\left(\abs{b(s,y)}^2+\abs{\gamma(s,y)}^2\right)\\
 &\leq 6\lambda_\sigma^{2}L^2(1+\abs{y})^2\\
 &\leq 12\lambda_\sigma^{2}L^2(1+\abs{y}^2),
\end{align*}
where the first inequality follows from \eqref{eq:uniform_ellipticity}, the second and fourth inequalities from the inequality $(a+b)^2\leq 2(a^2+b^2)$, and the third inequality from \eqref{eq:A2-Growth}. Therefore, in order to show that \eqref{eq:P_bound} holds, it suffices to show that
\begin{equation}\label{eq:no_explosion_consequence}
 \bb{P}^x\left(\int_0^T\abs{X^0_s}^2ds<\infty\right)=1.
\end{equation}
By \cite[Corollary after Theorem~2.2, pp.~199]{narita}, if $b$ and $\sigma$ are globally Lipschitz, if there exists some continuous function $\alpha:[0,\infty]\to[0,\infty)$ and a monotone, increasing, concave $\beta:[0,\infty]\to[0,\infty)$ such that
\begin{equation}\label{eq:no_explosion_criterion_2}
 \int_0^\infty \frac{1}{1+\beta(u)}du=\infty,
\end{equation}
and if
\begin{equation}\label{eq:no_explosion_criterion_1}
 2 y^Tb(t,y) +\abs{\sigma(t,y)}^2\leq \alpha(t)\beta(\abs{y}^2),\quad\forall (t,y)\in[0,T]\times \bb{R}^d,
\end{equation}
then the explosion time of $X^0$ is infinite, $\bb{P}^x$-almost surely (see the discussion after \cite[Equation (1.4)]{narita} for the definition of the explosion time). If the explosion time of $X^0$ is infinite $\bb{P}^x$-almost surely, then
\begin{equation}\label{eq:no_explosion}
 \bb{P}^x\left( \abs{X^0_s}<\infty \, ,\forall s\in [0,T]\right)=1.
\end{equation}
Of course, if $X^0_s$ takes values in a bounded set, then \eqref{eq:no_explosion} holds immediately. Since $X^0$ (and thus $\abs{X^0}$) has continuous paths $\bb{P}^x$-almost surely, it follows that \eqref{eq:no_explosion} implies \eqref{eq:no_explosion_consequence}, by the extreme value theorem. Now it remains to show that \eqref{eq:no_explosion_criterion_1} and \eqref{eq:no_explosion_criterion_2} are satisfied, given that $b$ and $\sigma$ are globally Lipschitz functions on their domains. Using the Cauchy--Schwarz inequality, Young's inequality, and \eqref{eq:A2-Growth}, it follows that
\begin{align*}
  y^Tb(t,y) & \leq \abs{y}^2+\abs{b(t,y)}^2\leq \abs{y}^2+L^2(1+\abs{y})^2\leq C'(1+\abs{y}^2),
 \\
 \abs{\sigma(t,y)}^2&\leq C(1+\abs{y})^2\leq C'(1+\abs{y}^2),
\end{align*}
for some appropriately chosen $C'>0$. By combining the preceding inequalities, we can conclude that, for globally Lipschitz $b$ and $\sigma$, \eqref{eq:no_explosion_criterion_1} is satisfied for $\alpha(t):=2C'$, and for $\beta(u):=1+u$ for $u\in[0,\infty)$. Note that $\alpha$ is continuous, and that $\beta$ is linear, and hence monotone, increasing, and concave; in addition, $\beta$ satisfies \eqref{eq:no_explosion_criterion_2}.
\end{proof}

\begin{lem}\label{app:C-vec} \label{app:C-norm} \label{app:L-nvs}
	The tuple $(V,\| \cdot \|_V)$ forms a Banach space, i.e.,~$V:= \mathcal{C} \cap L^{\infty}(\mu^{0,x})$ is closed with respect to~$\| \cdot \|_{V}$.
\end{lem}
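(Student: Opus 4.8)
The plan is to recast the norm $\Vrt{\cdot}_V$ as the maximum of two familiar seminorms and then argue completeness through an arbitrary Cauchy sequence, exactly as one does for spaces of bounded Lipschitz functions. First I would write, for $f:[0,T]\times\Reals^d\to\Reals^n$,
\[
\Vrt{f}_\infty:=\sup_{1\le i\le n,\,(t,y)}\abs{f_i(t,y)},\qquad \mathrm{Lip}(f):=\sup_{1\le i\le n,\ t,\ y\ne y'}\frac{\abs{f_i(t,y)-f_i(t,y')}}{\abs{y-y'}},
\]
and observe that a constant $C>0$ fulfils the two requirements appearing in \eqref{eq:norm_on_class_V} precisely when $C\ge\max(\Vrt{f}_\infty,\mathrm{Lip}(f))$; hence the minimum is attained and $\Vrt{f}_V=\max(\Vrt{f}_\infty,\mathrm{Lip}(f))$. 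Since $V=\mathcal{C}\cap L^\infty(\mu^{0,x})$ is a linear subspace and $\Vrt{\cdot}_V$ is the pointwise maximum of the seminorm $\mathrm{Lip}(\cdot)$ and the genuine norm $\Vrt{\cdot}_\infty$, it is itself a norm — provided it is finite on $V$, which I address at the end.

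Next I would prove completeness. Let $(f^{(k)})_{k\in\bb{N}}$ be a $\Vrt{\cdot}_V$-Cauchy sequence in $V$. From $\Vrt{\cdot}_\infty\le\Vrt{\cdot}_V$ the sequence is uniformly Cauchy in each component on $[0,T]\times\Reals^d$, so it converges uniformly to a bounded function $f:[0,T]\times\Reals^d\to\Reals^n$, which is Borel measurable as a pointwise limit of measurable functions and satisfies $\Vrt{f^{(k)}-f}_\infty\to0$. Because a Cauchy sequence is bounded, $M:=\sup_k\Vrt{f^{(k)}}_V<\infty$; letting $k\to\infty$ in $\abs{f^{(k)}_i(t,y)-f^{(k)}_i(t,y')}\le M\abs{y-y'}$ yields $\mathrm{Lip}(f)\le M$, so $f\in\mathcal{C}$, and its global boundedness gives $f\in L^\infty(\mu^{0,x})$; hence $f\in V$. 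To see that the convergence takes place in $\Vrt{\cdot}_V$, fix $\ep>0$ and $N$ with $\Vrt{f^{(k)}-f^{(m)}}_V\le\ep$ for $k,m\ge N$; by the reformulation this means $\abs{f^{(k)}_i(t,y)-f^{(m)}_i(t,y)}\le\ep$ and $\abs{(f^{(k)}_i-f^{(m)}_i)(t,y)-(f^{(k)}_i-f^{(m)}_i)(t,y')}\le\ep\abs{y-y'}$ for all $i,t,y,y'$, and sending $m\to\infty$ (pointwise convergence) keeps both inequalities with $f$ in place of $f^{(m)}$, i.e.\ $\Vrt{f^{(k)}-f}_V\le\ep$ for $k\ge N$.

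I expect the main obstacle to be not the completeness argument — which is the routine ``maximum of two seminorms'' plus uniform-limit reasoning above — but rather the well-definedness of the norm, i.e.\ showing that $\Vrt{\cdot}_V<\infty$ on $V$. The only nontrivial part there is $\Vrt{f}_\infty<\infty$: one has to upgrade the $\mu^{0,x}$-almost-sure bound on $f$ (boundedness along $\mathbb{P}^x$-a.e.\ trajectory of $X^0$) to a bound on all of $[0,T]\times\Reals^d$. This is exactly the point already invoked in Remark~\ref{rem:admissible_observable}(i): under the non-degeneracy~(A3) the topological support of $\mu^{0,x}$ is rich enough (a Stroock--Varadhan-type support property) that, combined with the continuity of $f$ in its spatial argument, a path-wise bound forces a global bound, so that in fact $\Vrt{f}_\infty=\Vrt{f}_{L^\infty(\mu^{0,x})}$. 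Once this is in place, the argument above shows that $(V,\Vrt{\cdot}_V)$ is complete, hence a Banach space.
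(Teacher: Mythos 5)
Your proof is correct, and it is actually more complete than the one in the paper. The paper verifies the norm axioms and a closure property, and then merely asserts that ``the completeness can be proven using adaptations of the arguments above''; you supply the missing Cauchy-sequence argument in full: uniform convergence via $\Vrt{\cdot}_\infty\le\Vrt{\cdot}_V$, persistence of the Lipschitz constant in the pointwise limit, and the standard $m\to\infty$ passage that upgrades pointwise convergence to $\Vrt{\cdot}_V$-convergence. Your rewriting of $\Vrt{\cdot}_V$ as $\max\bigl(\Vrt{\cdot}_\infty,\mathrm{Lip}(\cdot)\bigr)$ also makes this structure cleaner than in the paper's exposition. You further isolate the genuinely nontrivial point --- that $f\in V$ should force $\Vrt{f}_\infty<\infty$ --- and supply the reason (full support of the law of $X^0_t$ under the nondegeneracy~(A3), combined with Lipschitz continuity of $f(t,\cdot)$) that the paper's proof simply asserts with the words ``Lipschitz continuity combines with~\eqref{eq:almost_surely_bounded_changes_of_drift} to ensure that any $f\in\mathcal{C}\cap L^\infty(\mu^{0,x})$ is everywhere bounded.'' One caveat, shared by the paper: the support argument only controls $f(t,\cdot)$ for $t>0$. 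At $t=0$ the process is deterministic, $X^0_0=x$, so the a.s.\ bound together with Lipschitz continuity in $y$ yields only $\abs{f(0,y)}\le C+L\abs{y-x}$, which is linear growth, not boundedness of $f(0,\cdot)$. This is harmless for the SDE (the drift at the single time $t=0$ is Lebesgue-negligible), but it means that the identity $\Vrt{f}_\infty=\Vrt{f}_{L^\infty(\mu^{0,x})}$, and hence the finiteness of $\Vrt{\cdot}_V$ on all of $\mathcal{C}\cap L^\infty(\mu^{0,x})$, should really be read modulo the $t=0$ slice; a cleaner formulation would impose the Lipschitz and a.s.\ boundedness conditions on $(0,T]$ only, or define $V$ directly as the bounded Lipschitz class.
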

\begin{proof}
	We verify that $\Vrt{\cdot}_V$ indeed defines a norm on $V\coloneqq \mathcal{C}\cap L^\infty(\mu^{0,x})$. Note that the linear growth condition \eqref{eq:A2-Growth} is a weaker condition than the boundedness condition \eqref{eq:almost_surely_bounded_changes_of_drift}, and that Lipschitz continuity combines with \eqref{eq:almost_surely_bounded_changes_of_drift} to ensure that any $f\in \mathcal{C}\cap L^\infty(\mu^{0,x})$ is everywhere bounded. Thus, by the Lipschitz continuity and boundedness conditions, we have $\Vrt{f}_V=0$ if and only if $f$ is constant and equal to zero. Using the definition of $V$ and its norm, we observe that if $f,g\in V$ then $\Vrt{f+g}_V\leq \Vrt{f}_V+\Vrt{g}_V$, and also that $\Vrt{\alpha f}_V=\abs{\alpha}\Vrt{f}_V$ for all $\alpha\in\bb{R}$. Observe also that $V$ consists of all functions whose $\Vrt{\cdot}_V$-norm is finite. To show that $V$ is closed under $\Vrt{\cdot}_V$, let $(f_n)_{n\in\bb{N}}\subset V$ converge in $V$ to some function $f:[0,T]\times\bb{R}^d\to\bb{R}^d$. Since $f_n\in V$ and since $(\Vrt{f_n-f}_V)_{n\in\bb{N}}$ by definition converges to zero, we have
	\begin{equation*}
		\Vrt{f}_V\leq \Vrt{f_n-f}_V+\Vrt{f_n}_V,\quad \text{for all } n\in\bb{N}.
	\end{equation*}
	Both quantities on the right-hand side are finite for any $n\in\bb{N}$, so $f\in V$ and hence $f$ is Lipschitz continuous. Since $L^\infty(\mu^{0,x})$ contains discontinuous functions, it follows that $V$ is a normed vector space and a proper subset of $L^\infty(\mu^{0,x})$. The completeness can be proven using adaptations of the arguments above.
\end{proof}

\subsection{Proofs of lemmas~\ref{lem:Z_L2} and~\ref{lem:tech2}}
\label{app:proof-lem2}

Recall that we use $\langle M\rangle$ to denote the quadratic variation process of a continuous local martingale~$M$, as in~\eqref{eq:M_gamma_q}. The following result is given in \cite[Section 1.2, Theorem 1.5]{kazamaki}.
\begin{theorem}\label{thm:kazamakis_theorem}
	Let $M$ be a continuous local martingale. Let $1<p,p'<\infty$ be conjugate H\"{o}lder exponents, i.e., $p^{-1}+(p')^{-1}=1$. If
	\begin{equation}\label{eq:kazamakis_condition_for_Lq_boundedness_of_exp_MG}
		\sup\left\{\bb{E}\left[\exp\left(\frac{1}{2}\frac{\sqrt{p'}}{\sqrt{p'}-1}M_T\right)\right]\ \biggr\vert\ T\text{ a bounded stopping time }\right\}<\infty, \nonumber
	\end{equation}
	then the exponential martingale $(\exp(M_t-\tfrac{1}{2}\langle M\rangle_t))_{t\in[0,T]}$ is $L^p$-bounded. In particular, it holds that $\bb{E}\left[\exp(M_t-\tfrac{1}{2}\langle M\rangle_t)^p\right]$ is finite, for all $t\in [0,T]$.
\end{theorem}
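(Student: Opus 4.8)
Since Theorem~\ref{thm:kazamakis_theorem} is Kazamaki's classical exponential criterion, the plan in the paper is to use it as a black box and cite \cite[Section~1.2, Theorem~1.5]{kazamaki}; the only thing that then has to be checked at the point of application (in the proofs of Lemmas~\ref{lem:Z_L2} and~\ref{lem:tech2}) is that the exponential-moment hypothesis holds for the continuous local martingale $M=M^\gamma$ from~\eqref{eq:M_gamma}, which will follow from Assumption~\ref{ass:coefficient_bounds}, boundedness of $\gamma$, and the elementary bound on $\langle M^\gamma\rangle$. Still, here is the route I would take to prove the statement itself.

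The heart of the argument is a H\"older-type factorization of the $p$-th power of the stochastic exponential. I would start from $\mathcal{E}(M)_t^p=\exp\!\big(pM_t-\tfrac p2\langle M\rangle_t\big)$, split the exponent as $cM_t+\big((p-c)M_t-\tfrac p2\langle M\rangle_t\big)$ for a free parameter $c$, and apply H\"older's inequality with a conjugate pair $(\rho,\rho')$, $\rho>1$, to get
\[
\bb{E}\!\left[\exp\!\Big(pM_t-\tfrac p2\langle M\rangle_t\Big)\right]\le \bb{E}\!\left[\exp(\rho c\,M_t)\right]^{1/\rho}\,\bb{E}\!\left[\exp\!\Big(\rho'(p-c)M_t-\tfrac{\rho'p}{2}\langle M\rangle_t\Big)\right]^{1/\rho'}.
\]
Next I would calibrate $c$ and $\rho$ so that the second factor is an exact stochastic exponential $\mathcal{E}\big(\rho'(p-c)M\big)_t$ --- this forces a relation matching the coefficient of $\langle M\rangle_t$ --- and then bound that factor by $1$, using that the stochastic exponential of a continuous local martingale is a nonnegative local martingale, hence a supermartingale with initial value $1$. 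The first factor is bounded uniformly in $t$ by the hypothesis of the theorem, since a deterministic $t\in[0,T]$ is in particular a bounded stopping time, provided the remaining freedom in $(\rho,c)$ is used to make $\rho c$ equal the constant $\tfrac12\tfrac{\sqrt{p'}}{\sqrt{p'}-1}$ that appears there. Combining the two bounds gives $\sup_{t\in[0,T]}\bb{E}\big[\mathcal{E}(M)_t^p\big]<\infty$; as $p>1$, the family $\{\mathcal{E}(M)_t\}_{t\in[0,T]}$ is then uniformly integrable, so the local martingale $\mathcal{E}(M)$ is a genuine $L^p$-bounded martingale, and the ``in particular'' clause is immediate.

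The step I expect to be the real obstacle is the calibration: one must check that $\rho>1$ and $c$ can be chosen so that \emph{both} constraints hold at once --- the $\langle M\rangle$-matching that turns the second factor into a stochastic exponential, and the requirement that the leftover coefficient of $M_t$ in the first factor be exactly $\tfrac12\tfrac{\sqrt{p'}}{\sqrt{p'}-1}$. Verifying the feasibility of this small system of equations, and thereby optimizing the factorization, is what produces the sharp constant and carries the genuine content of the theorem; the H\"older step, the supermartingale bound, and the passage from an $L^p$-estimate to a true martingale are all routine. Since the paper uses the theorem only as a tool, I would not reproduce this calibration but defer to \cite{kazamaki}, and to \cite{revuz_yor} for the background facts on continuous local martingales.
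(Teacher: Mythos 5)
You correctly anticipated that the paper gives no proof of this statement: it is imported verbatim from \cite[Section~1.2, Theorem~1.5]{kazamaki} with only the citation, so there is no in-paper argument to compare against. Your supplemental sketch is a correct outline of Kazamaki's standard H\"older-plus-supermartingale factorization, and the calibration you flag as the ``real obstacle'' is in fact feasible: taking $a:=\rho'(p-c)=p+\sqrt{p^2-p}$ forces $\rho'=a^2/p>1$, $c=p(a-1)/a$, and yields $\rho c=\tfrac{1}{2}\bigl(p+\sqrt{p^2-p}\bigr)=\tfrac12\tfrac{\sqrt{p'}}{\sqrt{p'}-1}$, the sharp constant in the hypothesis.
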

Now we use Theorem \ref{thm:kazamakis_theorem} to prove Lemma \ref{lem:Z_L2}.
\begin{proof}[Proof of Lemma \ref{lem:Z_L2}]
	Recall that we abbreviate~$\Vrt{\cdot}_{L^\infty(\mu^{0,x})}$ as~$\Vrt{\cdot}_{L^\infty}$. Given property \eqref{eq:uniform_ellipticity},
	\begin{equation*}
		\abs{\sigma^{-1}\gamma(s,y)}\leq \lambda_\sigma^{2} \Vrt{\gamma}_{L^\infty}\quad \text{ for all } (s,y)\in[0,T]\times\Reals^d
	\end{equation*}
	holds, and thus by the definition of $\langle M^\gamma\rangle_t\left(X^0\right)$ we get
	\begin{equation}\label{eq:almost_sure_bound_on_qv}
		\left(\langle M^\gamma\rangle_t (X^0)\right)^r\leq  \left(\lambda_\sigma^{2} \Vrt{\gamma}^2_{L^\infty} t \right)^r\leq \left(\lambda_\sigma\sqrt{t}\Vrt{\gamma}_{L^\infty}\right)^{2r}\quad\text{for all } r\geq 1,
	\end{equation}
	uniformly in $x$. Then,~\eqref{eq:almost_sure_bound_on_qv} implies
	\begin{equation}\label{eq:novikov_condition}
 		\bb{E}^{0,x}\left[\exp\left(\alpha\langle M^\gamma\rangle_t \right)\right]\leq \exp\left(\alpha \lambda_\sigma^2 t\Vrt{\gamma}^2_{L^\infty}\right),\quad\text{for all } \alpha \geq 0.
	\end{equation}
	Using $Z^\gamma_t=\exp(M^\gamma_t-\tfrac{1}{2}\langle M^\gamma\rangle_t)$, it follows that~$\exp(\tfrac{1}{2}M^\gamma_t)=(Z^\gamma_t)^{1/2}\exp(\frac{1}{4}\langle M^\gamma\rangle_t)$. Since $Z^\gamma_t$ is the Radon--Nikodym derivative of two probability measures, it holds that $\bb{E}^{0,x}[Z^\gamma_t]=1$. Using this fact and the Cauchy--Schwarz inequality, we obtain
	\begin{equation*}
		\bb{E}^{0,x}\left[\exp\left(\frac{1}{2}M^\gamma_t\right)\right]\leq \bb{E}^x\left[\exp\left(\frac{1}{2}\langle M^\gamma\rangle_t\right)(X^0)\right]^{1/2}.
	\end{equation*}
	Now, let $1<p'<\infty$, replace $\gamma$ with $(\sqrt{p'}/(\sqrt{p'}-1))\gamma$, use the linearity of the map $\gamma\mapsto M^\gamma_t$, and use that $\langle  M^{\alpha\gamma}\rangle_t=\alpha^2\langle M^\gamma\rangle_t$ for every $\alpha\in\Reals$, to obtain that
	\begin{equation*}
		\bb{E}^x\left[\exp\left(\frac{\sqrt{p'}}{2(\sqrt{p'}-1)}M^\gamma_t\right)(X^0)\right]\leq \bb{E}^x\left[\exp\left(\frac{p'}{2(\sqrt{p'}-1)^2}\langle M^\gamma\rangle_t\right)(X^0)\right]^{1/2}.
	\end{equation*}
	It follows by choosing $p'=p=2$, \eqref{eq:novikov_condition}, and Theorem~\ref{thm:kazamakis_theorem} that~$Z^\gamma_t(X^0)\in L^2(\Prob^x)$. 
\end{proof}

\begin{proof}[Proof of Lemma~\ref{lem:tech2}]
	By the binomial theorem, $(\abs{M^\gamma_t}+\tfrac{1}{2}\langle M^\gamma\rangle_t)^n$ involves summing $\abs{M^\gamma_t}^m\langle M^\gamma\rangle_t^{\ell-m}$ for all $0\leq m\leq \ell$. Using Young's inequality, we obtain for the H\"{o}lder conjugate pair $(p,p')\in (1,\infty)^2$ that
	\begin{equation*}
		\abs{M^\gamma_t}^{\ell-m}\langle M^\gamma\rangle_t^{m}\leq \frac{1}{p}\abs{M^\gamma_t}^{p(\ell-m)}+\frac{1}{p'}\langle M^\gamma\rangle_t^{mp'}.
	\end{equation*}
	For $1\leq m\leq \ell-1$, setting $p=(\ell+m)/(\ell-m)$ and $p'=(\ell+m)/(2m)$ yields
	\begin{equation*}
		\abs{M^\gamma_t}^{\ell-m}\langle M^\gamma\rangle_t^{m}\leq \abs{M^\gamma_t}^{\ell+m}+\langle M^\gamma\rangle_t^{(\ell+m)/2}.
	\end{equation*}
	By Doob's inequality \cite[Equation (2.1)]{Peskir:1996}, we have 
	\begin{equation}\label{eq:doobs_inequality}
		\bb{E}^{0,x} \left[ \sup_{0\leq t\leq T}\abs{M^\gamma_t}^{r}\right]\leq \left(\frac{r}{r-1}\right)^{r} \bb{E}^{0,x} \left[ \langle M^\gamma\rangle_t^{r/2}\right],\quad\text{for all } r>1\,.
	\end{equation}
	Note that the map $r\mapsto (r/(r-1))^r$ decreases as $r$ increases. Therefore, given the preceding inequalities and \eqref{eq:almost_sure_bound_on_qv}, it follows that for $\ell \geq 2$ and $1\leq m\leq \ell-1$,
	\begin{equation}
		\bb{E}^{0,x} \left[ \abs{M^\gamma_t}^{\ell-m}\langle M^\gamma\rangle_t^{m}\right]\leq (C+1)\bb{E}^{0,x}\left[\langle M^\gamma\rangle_t^{(\ell+m)/2}\right]\leq (C+1)\left(\lambda_\sigma\sqrt{t}\Vrt{\gamma}_{L^\infty}\right)^{\ell+m} \label{eq:doobs_inequality_consequence_01}
	\end{equation}
	where $C=(4/3)^4$. Using the binomial theorem, \eqref{eq:doobs_inequality} and \eqref{eq:doobs_inequality_consequence_01}, and the fact that $C>1$, we obtain
	\begin{align}
		\bb{E}^{0,x}\left[\left(\abs{M^\gamma_t}+\frac{1}{2}\langle M^\gamma\rangle_t\right)^\ell\right]&=\sum^\ell_{m=0}\begin{pmatrix} \ell\\ m \end{pmatrix} \bb{E}^{0,x}\left[ \abs{M^\gamma_t}^{\ell-m}\langle M^\gamma\rangle_t^{m}\right]
 		\nonumber \\
		&=\bb{E}^{0,x}\left[\abs{M^\gamma_t}^\ell\right]+\sum^{\ell-1}_{m=1}\begin{pmatrix} \ell\\ m \end{pmatrix} \bb{E}^{0,x}\left[ \abs{M^\gamma_t}^{\ell-m}\langle M^\gamma\rangle_t^{m}\right]+\bb{E}^{0,x}\left[\langle M^\gamma\rangle_t^\ell\right]
		\nonumber\\
		&\leq C \bb{E}^{0,x}\left[\langle M^\gamma\rangle_t^{\ell/2}\right]+\sum^{\ell-1}_{m=1}\begin{pmatrix} \ell\\ m \end{pmatrix} \bb{E}^{0,x}\left[ \abs{M^\gamma_t}^{\ell-m}\langle M^\gamma\rangle_t^{m}\right]+\bb{E}^{0,x}\left[\langle M^\gamma\rangle_t^\ell\right]
		\nonumber\\
		&\leq C (\lambda_\sigma\sqrt{t}\Vrt{\gamma}_{L^\infty})^{\ell}+\sum^{\ell-1}_{m=1}\begin{pmatrix} \ell\\ m \end{pmatrix} (C+1)(\lambda_\sigma\sqrt{t} \Vrt{\gamma}_{L^\infty})^{\ell+m}+(\lambda_\sigma\sqrt{t}\Vrt{\gamma}_{L^\infty})^{2\ell}
		\nonumber\\
		&\leq C(\lambda_\sigma\sqrt{t}\Vrt{\gamma}_{L^\infty})^{\ell}\left(1+2\sum^{\ell-1}_{m=1}\begin{pmatrix} \ell\\ m \end{pmatrix} (\lambda_\sigma\sqrt{t} \Vrt{\gamma}_{L^\infty})^{m}+(\lambda_\sigma\sqrt{t}\Vrt{\gamma}_{L^\infty}^{\ell}\right)
		\nonumber\\
		&\leq 2C(\lambda_\sigma\sqrt{t}\Vrt{\gamma}_{L^\infty})^{\ell}\left(1+\lambda_\sigma\sqrt{t}\Vrt{\gamma}_{L^\infty}\right)^{\ell} \label{eq:doobs_inequality_consequence_02}
	\end{align}
	for $\ell\geq 2$. Observe that if $\Vrt{\gamma}_{L^\infty}\leq \lambda_\sigma\sqrt{t}^{-1}$, then $(1+\lambda_\sigma\sqrt{t}\Vrt{\gamma}_{L^\infty})^\ell\leq 2^\ell$, and from \eqref{eq:doobs_inequality_consequence_02} it follows that
	\begin{equation*}
		\bb{E}^{0,x}\left[\left(\abs{M^\gamma_t}+\frac{1}{2}\langle M^\gamma\rangle_t\right)^\ell\right]\leq C(\lambda_\sigma\sqrt{t}\Vrt{\gamma}_{L^\infty})^{\ell} 2^{\ell+1}.
	\end{equation*}
	We shall use the equation 
	\begin{equation*}
		\abs{\sum^\infty_{n=2}\frac{1}{n!}y^n}^2=\sum_{\ell=4}^\infty\sum_{\substack{j,k\geq 2\\ j+k=\ell}}\frac{1}{j!k!}y^\ell,
	\end{equation*}
	with $y=\abs{M^\gamma_t}+\tfrac{1}{2}\langle M^\gamma\rangle_t$ and $y=\sqrt{\lambda_\sigma\sqrt{t}\Vrt{\gamma}_{L^\infty}}$. By the dominated convergence theorem, we may interchange summation with expectation:
	\begin{align}
		\bb{E}^{0,x}\left[\left(\sum^\infty_{n=2}\frac{1}{n!}\left(\abs{M^\gamma_t}+\frac{1}{2}\langle M^\gamma\rangle_t\right)^n\right)^2\right]&=\bb{E}^{0,x}\left[\sum^\infty_{\ell=4}\sum_{\substack{j,k\geq 2\\ j+k=\ell}}\frac{1}{j!k!}\left(\abs{M^\gamma_t}+\frac{1}{2}\langle M^\gamma\rangle_t\right)^\ell\right]
		\nonumber\\
		&=\sum^\infty_{\ell=4}\sum_{\substack{j,k\geq 2\\ j+k=\ell}}\frac{1}{j!k!}\bb{E}^{0,x}\left[\left(\abs{M^\gamma_t}+\frac{1}{2}\langle M^\gamma\rangle_t\right)^\ell \right]
		\nonumber\\
		&\leq 2C \sum_{\ell=4}^\infty\sum_{\substack{j,k\geq 2\\ j+k=\ell}}\frac{1}{j!k!}\left(2\lambda_\sigma\sqrt{t}\Vrt{\gamma}_{L^\infty}\right)^{ \ell} \label{eq:bound01}  \nonumber\\
 		&=2C \big( \exp\left(\lambda_\sigma\sqrt{t}\Vrt{\gamma}_{L^\infty}\right)-1-2\lambda_\sigma\sqrt{t}\Vrt{\gamma}_{L^\infty} \big)^2.
		\nonumber
	\end{align}
\end{proof}

\section{Properties of the Transition Kernel}
\label{app:kernel_prop}

Via the connection between the SDE~\eqref{eq:original_sde} and the Fokker--Planck partial differential equation~\eqref{eq:FokkerPlanck} below, one can investigate properties of the transition kernel both by stochastic calculus and the theory of partial differential equations (PDEs). This has been done since the 1950's in great detail and generality, however, the particular case of kernel (or fundamental solutions) estimates for a time-dependent SDE (or parabolic PDE) with reflecting (or homogeneous Neumann) boundary conditions could not be found by the authors. There exist regularity results for the kernel for general SDEs~\cite[Section 5.7.B]{KaSh12} and quantitative upper and lower bounds for reversible SDEs~\cite{FaSt89} on~$\Reals^d$, further there are heat kernel (i.e., pure diffusion without drift) estimates for smooth manifolds with Neumann boundary conditions~\cite[Theorem~3.8]{Wang97}. It is unclear to the authors to which extent the very general results of Ivasi{\v s}en~\cite{Iva82a, Iva82b} apply in our case.

Thus, we shall derive properties of the transition kernel associated with the SDE~\eqref{eq:perturbed_sde} on a bounded domain~$\mathbb{X}$ with sufficiently smooth boundary and reflecting boundary conditions from estimates stemming from the theory of parabolic PDEs. In particular, we will be following the steps of Friedman~\cite{Fri08} and sharpen some of his estimates.

The general approach is the following. Associated with the SDE~\eqref{eq:original_sde} (and analogously, with~\eqref{eq:perturbed_sde}) is the Fokker--Planck (or forward Kolmogorov) equation~\cite[Section~11]{LaMa94}
\begin{equation} \label{eq:FokkerPlanck}
\frac{\partial f}{\partial t} =  \frac12 \sum_{i,j=1}^d \frac{\partial^2}{\partial x_i \partial x_j} \left( (\sigma\sigma^{\top})_{ij} f\right) - \sum_{i=1}^d \frac{\partial}{\partial x_i}(b_i f)
\end{equation}
for~$f = f(x,t)$, which describes the density evolution with respect to~\eqref{eq:original_sde}. Thus,
\[
f(x,t) = \int_{\mathbb{X}} k_t(\xi,x,\gamma)\vert_{\gamma=0}f(\xi,0)\,d\xi
\]
solves~\eqref{eq:FokkerPlanck}.
Reflecting boundary conditions for the SDE translate into homogeneous Neumann boundary conditions for~\eqref{eq:FokkerPlanck}, cf.~\cite[Theorem~3.1.1]{Pil14}. If this initial-boundary value problem admits the \emph{fundamental solution}~$\Gamma_N = \Gamma_N(x,t;\xi, \tau)$, i.e.,
\[
f(x,t) = \int_{\mathbb{X}} \Gamma_N(x,t;\xi,0) f_0(\xi)\,d\xi\,,
\]
where~$f_0 = f(\cdot,0)$ is the initial condition, then uniqueness of the solutions (see~\cite[Theorem~5.3.2]{Fri08} or~\cite[Section 6.12, Theorem 6.6]{Tan96} for the~$L^p$ case) gives~$ \Gamma_N(x,t;\xi,0) = k_t(\xi,x,\gamma)\vert_{\gamma=0}$, and bounding~$\Gamma_N$ shows Lemma~\ref{lem:kernel_bounds}.

When referring to equation~($k.\ell.m$), we mean equation~($\ell.m$) in~\cite[Chapter~$k$]{Fri08}. Further, we will partially adapt Friedman's notation, thus we write~\eqref{eq:FokkerPlanck} (and its analogue for the $\gamma$-perturbed SDE~\eqref{eq:perturbed_sde}) as~$\mathcal{L}f = 0$, where~$\mathcal{L}$ is a uniformly parabolic (with coercivity constant~$\lambda_\sigma^{-2}$ by~\eqref{eq:uniform_ellipticity}) differential operator with uniformly bounded, H\"older continuous (with some exponent~$\alpha>0$) coefficients and $\sigma$ being a temporal variable for now. Note that the assumptions at the beginning of Section~\ref{sec:transfer_ops} imply those in~\cite[sections~1.1 and 5.1]{Fri08}. It is also important to note that the bound~$\|\gamma\|_V \le \varepsilon$ implies that the different constants denoted by `$\const$' in the following all depend on~$\varepsilon$, and are hence uniform in~$\gamma$. The constants `$\const$' will always be independent of the spatial and temporal variables~$x,y,\xi, t,\sigma,\tau$.

To establish the desired bound on the fundamental solution, we will need an analogue of~\cite[Lemma~1.4.3]{Fri08}, with respect to integration on the boundary~$\partial\mathbb{X}$.
\begin{lem} \label{lem:Fri08_Lem1.4.3_analogue}
For~$-\infty < \kappa_1, \kappa_2 < \frac{d-1}{2}+1$, $h>0$, and 
\begin{align*}
|f(x,t;\xi,\tau)| &\le \const (t-\tau)^{-\kappa_1} \exp\left[ -\frac{h | x-\xi |^2}{4(t-\tau)} \right]\,, \\
|g(x,t;\xi,\tau)| &\le \const (t-\tau)^{-\kappa_2} \exp\left[ -\frac{h | x-\xi |^2}{4(t-\tau)} \right]\,,
\end{align*}
one has
\begin{equation} \label{eq:convolution_bound}
\begin{aligned}
\int_{\sigma}^t \int_{\partial \mathbb{X}} | f(x,t; \xi,\tau) g(\xi,\tau; y,\sigma) | \,dS_{\xi}d\tau &\le \const \left(\frac{4\pi}{h}\right)^{\frac{d-1}{2}} B\left( \frac{d-1}{2}-\kappa_1+1, \frac{d-1}{2}-\kappa_2+1\right)\cdot \\
&\phantom{\le} \quad \cdot (t-\sigma)^{d/2+1/2-\kappa_1-\kappa_2}\, \exp\left[ -\frac{h | x-y |^2}{4(t-\sigma)} \right],
\end{aligned}
\end{equation}
where~$S_{\xi}$ denotes the surface measure on~$\partial\mathbb{X}$ with respect to~$\xi$, and~$B(\cdot, \cdot)$ denotes the beta function.
\end{lem}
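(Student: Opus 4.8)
The plan is to follow the template of Friedman's proof of \cite[Lemma~1.4.3]{Fri08}, replacing the volume integral over $\mathbb{R}^d$ by a surface integral over the $(d-1)$-dimensional manifold $\partial\mathbb{X}$. First I would insert the two pointwise bounds on $f$ and $g$, so that the left-hand side of \eqref{eq:convolution_bound} is dominated by
\[
\const \int_\sigma^t (t-\tau)^{-\kappa_1}(\tau-\sigma)^{-\kappa_2} \int_{\partial\mathbb{X}} \exp\!\left[ -\frac{h}{4}\left( \frac{|x-\xi|^2}{t-\tau} + \frac{|\xi - y|^2}{\tau-\sigma}\right) \right] dS_\xi\, d\tau\,.
\]
The next step is the elementary Chapman--Kolmogorov-type algebraic identity
\[
\frac{|x-\xi|^2}{t-\tau} + \frac{|\xi-y|^2}{\tau-\sigma} = \frac{|x-y|^2}{t-\sigma} + \frac{t-\sigma}{(t-\tau)(\tau-\sigma)}\,\bigl| \xi - z^\ast \bigr|^2\,, \qquad z^\ast := \frac{(\tau-\sigma)x + (t-\tau)y}{t-\sigma}\,,
\]
obtained by completing the square in $\xi$. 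This pulls the factor $\exp[-h|x-y|^2/(4(t-\sigma))]$ out of the $\xi$-integral and leaves a genuine Gaussian in $\xi$ centred at $z^\ast$ with width parameter $a := h(t-\sigma)/(4(t-\tau)(\tau-\sigma))$.

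The key new ingredient, and the only place where the boundary enters, is the bound
\[
\int_{\partial\mathbb{X}} e^{-a|\xi - z^\ast|^2}\, dS_\xi \le \const\, a^{-(d-1)/2} \quad\text{uniformly in } z^\ast \in \mathbb{R}^d\,,
\]
whose right-hand side equals $\const\,(4\pi/h)^{(d-1)/2}\,((t-\tau)(\tau-\sigma)/(t-\sigma))^{(d-1)/2}$ after substituting for $a$. I would prove this by a layer-cake (distribution-function) argument: because $\partial\mathbb{X}$ is a compact $C^{1+\delta}$ hypersurface, there is a constant $c_0$ with $S_\xi(\partial\mathbb{X}\cap B(z,r)) \le c_0 r^{d-1}$ for all $z\in\mathbb{R}^d$ and $r>0$ (equivalently, cover $\partial\mathbb{X}$ by finitely many graph charts with uniformly bounded Jacobians and compare the surface measure with Lebesgue measure on $\mathbb{R}^{d-1}$); writing the Gaussian as $\int_0^1 \mathds{1}\{e^{-a|\xi-z^\ast|^2}>\lambda\}\,d\lambda$ and integrating this tail estimate then produces a finite absolute constant (a Gamma value, absorbed into $\const$) times $a^{-(d-1)/2}$.

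Finally, I would carry out the remaining time integral. Substituting $\tau = \sigma + (t-\sigma)r$ with $r\in(0,1)$ turns
\[
\int_\sigma^t (t-\tau)^{-\kappa_1 + \frac{d-1}{2}}(\tau-\sigma)^{-\kappa_2 + \frac{d-1}{2}}(t-\sigma)^{-\frac{d-1}{2}}\, d\tau
\]
into $(t-\sigma)^{d/2+1/2-\kappa_1-\kappa_2}$ times $\int_0^1 (1-r)^{\frac{d-1}{2}-\kappa_1} r^{\frac{d-1}{2}-\kappa_2}\,dr = B\bigl(\tfrac{d-1}{2}-\kappa_1+1,\ \tfrac{d-1}{2}-\kappa_2+1\bigr)$; here the hypothesis $\kappa_1,\kappa_2 < \tfrac{d-1}{2}+1$ is exactly what guarantees that both Beta-function arguments are positive, so that the integral converges. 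Collecting constants gives \eqref{eq:convolution_bound}. I expect the only genuinely delicate point to be the uniformity in $z^\ast$ of the Gaussian surface estimate — making sure the covering of $\partial\mathbb{X}$ and the implied constant do not degenerate as $z^\ast$ approaches or recedes from $\partial\mathbb{X}$ — which is precisely where the global $C^{1+\delta}$ regularity and compactness of $\partial\mathbb{X}$ are used.
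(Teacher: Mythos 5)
Your proof is correct. It reaches the same bound but by a different route than the paper, which simply covers $\partial\mathbb{X}$ by finitely many $C^{1+\delta}$ chart patches, notes the volume distortions are bounded above and below, and then invokes Friedman's flat-space Lemma~1.4.3 with $n=d-1$ directly. You instead re-run Friedman's argument from scratch (completing the square, factoring out the Chapman--Kolmogorov exponential, doing the Beta-function time integral), with the single new ingredient being the uniform Gaussian surface estimate $\int_{\partial\mathbb{X}} e^{-a|\xi - z^\ast|^2}\,dS_\xi \le \const\, a^{-(d-1)/2}$, which you prove by a layer-cake argument from the Ahlfors-type bound $S_\xi(\partial\mathbb{X}\cap B(z,r)) \le c_0 r^{d-1}$. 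This is arguably the more robust formulation: the paper's chart reduction quietly requires one to handle metric distortion in the Gaussian exponent (since $|x-\xi|$ is the ambient Euclidean distance, not the chart-coordinate distance) and to reconcile the bounded chart domains with the full $\Reals^{d-1}$ integration of Friedman's lemma, whereas your layer-cake bound is coordinate-free, manifestly uniform in the centre $z^\ast$, and isolates exactly the geometric input needed --- namely the $(d-1)$-dimensional volume growth of $\partial\mathbb{X}$. The price is that you re-prove the algebraic identity and time integral that the paper gets for free by citation; the Beta-function convergence condition $\kappa_1,\kappa_2 < \frac{d-1}{2}+1$ falls out identically in both treatments.
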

\begin{proof}[Proof of Lemma~\ref{lem:Fri08_Lem1.4.3_analogue}]
As~$\mathbb{X}$ is bounded, and~$\partial\mathbb{X}$ is of class~$C^{1+\delta}$, the set~$\partial\mathbb{X}$ can be decomposed into a finite number of patches such that each can be diffeomorphically mapped by charts to a bounded subset of~$\Reals^{d-1}$. Noting that these charts introduce volume distortions that are uniformly bounded and bounded away from zero, and applying \cite[Lemma~1.4.3]{Fri08}\footnote{This lemma gives the analytically computable exact value of the integral, if one replaces $f$ and $g$ by their respective bounds, and integrates with respect to~$\xi$ over~$\Reals^n$.} with~$n=d-1$ yields~\eqref{eq:convolution_bound}, where `$\const$' depends only on~$\partial\mathbb{X}$.
\end{proof}

\begin{proof}[Proof of Lemma~\ref{lem:kernel_bounds}]
Let~$\Gamma$ be a fundamental solution of the general problem~$\mathcal{L}f = 0$, i.e., $\Gamma(\cdot,\cdot; \xi,\tau)$, considered as a function in~$x,t$, satisfies~$\mathcal{L}\Gamma = 0$ for every~$\xi,\tau$, and 
\[
\lim_{t\downarrow \tau} \int_{\mathbb{X}} \Gamma(x,t;\xi,\tau) f(\xi)\,d\xi = f(x)\,,
\]
cf.~(1.2.8). Note that~$d$ denotes the dimension of our spatial domain~$\mathbb{X}$. Friedman shows (cf.\ (1.6.12)), that
\begin{equation} \label{eq:GammaBound}
|\Gamma(x,t;\xi,\tau)| \le \frac{\const}{(t-\tau)^{d/2}} \exp\left[ -\frac{\lambda_0^* | x-\xi |^2}{4(t-\tau)} \right]\,,
\end{equation}
where~$0<\lambda_0^*<\lambda_0$ is some constant, and~$\lambda_0$ depends only on~$L,\lambda_\sigma$ (from Assumption~\ref{ass:coefficient_bounds}), $\alpha$ and~$\mathbb{X}$; cf.~(1.2.2).

In what follows, we will establish a bound of this type for~$\Gamma_N$ as well. To this end, we will need a kind of an analogue of~\cite[Lemma~1.4.3]{Fri08}, with respect to integration on the boundary~$\partial\mathbb{X}$.

Now, let us construct the fundamental solution~$\Gamma_N$ of the problem with Neumann boundary conditions. The general form of the problem is stated in~\cite[(5.3.1)--(5.3.3)]{Fri08}, but note that his functions~$\beta(x,t), f(x,t), g(x,t)$ are identically zero in our case, such that we arrive at
\[
\begin{array}{rcll}
\mathcal{L}u &=&  0\quad &\text{in }\mathbb{X}\times (0,T], \\
u(\cdot,0) &=& f_0  &\text{on }\overline{\mathbb{X}}, \\
\frac{\partial u}{\partial\nu} &=& 0 &\text{on }\partial\mathbb{X}\times (0,T],
\end{array}
\]
where $\frac{\partial u}{\partial\nu}$ is the conormal derivative on~$\partial\mathbb{X}$. Proceeding as in~\cite[Theorem~5.3.2]{Fri08}, the solution of this problem is given by (cf.~(5.3.5), (5.3.6), and (5.3.8))
\begin{equation} \label{eq:Fri08_u}
u(x,t) = \int_0^t\int_{\partial\mathbb{X}} \Gamma(x,t;\xi,\tau)\varphi(\xi,\tau)\,dS_{\xi}d\tau + \int_{\mathbb{X}} \Gamma(x,t;\xi,0)f_0(\xi)\,d\xi,
\end{equation}
where
\begin{equation} \label{eq:Fri08_phi}
\varphi(x,t) = 2\int_0^t\int_{\partial\mathbb{X}} \frac{\partial \Gamma(x,t;\xi,\tau)}{\partial \nu(x,t)} \varphi(\xi,\tau)\,dS_{\xi}d\tau + 2F(x,t), \nonumber
\end{equation}
with
\begin{equation} \label{eq:Fri08_F}
F(x,t) = \int_{\mathbb{X}} \frac{\partial \Gamma(x,t;\xi,0)}{\partial \nu(x,t)} f_0(\xi)\,d\xi.
\end{equation}
We start with expressing~$\varphi$ explicitly via an infinite series, as in~(5.3.10). Setting $M_0 = 1 / \big( \int_0^t \int_{\partial\mathbb{X}} \, dS_{\xi}\,d\tau \big)$, $M_1(x,t; \xi,\tau) = \frac{\partial \Gamma (x,t; \xi,\tau)}{\partial \nu(x,t)}$, and for~$\ell \ge 1$ recursively
\begin{equation} \label{eq:M_recursion}
M_{\ell+1}(x,t; \xi,\tau) = \int_0^t\int_{\partial\mathbb{X}} M_1(x,t; y,\sigma) M_{\ell} (y,\sigma; \xi,\tau) \, dS_y\,d\sigma\,,
\end{equation}
it is shown on~\cite[Section~5.3, pp.~145]{Fri08} that~\eqref{eq:Fri08_phi} defines a continuous bounded function~$\varphi$ that is given by
\begin{equation} \label{eq:Fri08_phirecursion}
\varphi(x,t) = 2 \sum_{\ell=0}^{\infty} \int_0^t\int_{\partial\mathbb{X}} M_{\ell} (x,t; \xi,\tau) F(\xi,\tau) \, dS_{\xi}\,d\tau.
\end{equation}
We will now show that~$\varphi$ can be expressed in the kernel form
\begin{equation} \label{eq:phi_kernel_form}
\varphi(x,t) = \int_0^t\int_{\partial\mathbb{X}} \Psi (x,t; \xi,\tau) F(\xi,\tau) \, dS_{\xi}\,d\tau,
\end{equation}
where
\begin{equation} \label{eq:Psi_kernel}
\Psi(x,t; \xi,\tau) = 2  \sum_{\ell=0}^{\infty} M_{\ell} (x,t; \xi,\tau) ,
\end{equation}
and that~$\Psi$ satisfies a bound of type~\eqref{eq:GammaBound}. Following the derivation of~(5.2.12) we can show that for~$x\in \partial\mathbb{X}$ the bound
\begin{equation} \label{eq:GammaNuBound}
\left| \frac{\partial \Gamma(x,t;\xi,\tau)}{\partial \nu(x,t)} \right | \le \frac{\const}{(t-\tau)^{\mu}}\, \exp\left[ -\frac{\lambda_0^* | x-\xi |^2}{4(t-\tau)} \right]
\end{equation}
holds for any~$(1-\beta/2) < \mu$ and~$\lambda_0^* < \lambda_0$, where~$\beta = \min(\alpha,\delta)$. Using this bound with~$\mu = \frac{d+1-\beta}{2}$, we obtain from~\eqref{eq:M_recursion} with~Lemma~\ref{lem:Fri08_Lem1.4.3_analogue} that
\[
\left| M_2(x,t; \xi, \tau) \right| \le \frac{\const}{(t-\tau)^{d/2+1/2 - \beta}}\, \exp\left[ -\frac{\lambda_0^* | x-\xi |^2}{4(t-\tau)} \right],
\]
and further, by induction,
\[
\left| M_{\ell}(x,t; \xi, \tau) \right| \le \const \frac{ H_0 H^{\ell} }{\Gamma( \ell\beta )} (t-\tau)^{\ell\beta/2 - d/2 - 1/2 }\,  \exp\left[ -\frac{\lambda_0^* | x-\xi |^2}{4(t-\tau)} \right],
\]
where~$H_0,H$ are some positive constants, and~$\Gamma(\cdot)$ denotes the gamma function. This procedure is completely analogous to the steps in~\cite[Section~1.4, pp.~16]{Fri08} that lead to the bounds~(1.4.14) and~(1.4.15). Following Friedman's exposition, we conclude that the series~\eqref{eq:Psi_kernel} is absolutely convergent, that~\eqref{eq:phi_kernel_form} holds, and in particular, that
\begin{equation} \label{eq:PsiBound}
\left| \Psi(x,t,; \xi, \tau) \right| \le \frac{\const}{(t-\tau)^{(d+1 - \beta)/2}}\, \exp\left[ -\frac{\lambda_0^* | x-\xi |^2}{4(t-\tau)} \right].
\end{equation}
We are now ready to assemble the fundamental solution~$\Gamma_N$. Substituting~\eqref{eq:phi_kernel_form} and~\eqref{eq:Fri08_F} back into~\eqref{eq:Fri08_u}, the bounds~\eqref{eq:GammaBound}, \eqref{eq:GammaNuBound}, and~\eqref{eq:PsiBound} guarantee absolute integrability, and thus the upcoming integrals are interchangeable, giving
\begin{align}
u(x,t) &= \int_0^t\int_{\partial\mathbb{X}} \Gamma(x,t;\xi,\tau)\int_0^t\int_{\partial\mathbb{X}} \Psi(\xi,\tau; y,\sigma) \int_{\mathbb{X}} \frac{\partial\Gamma(y,\sigma; z,0)}{\partial\nu(y,\sigma)} f_0(z)\,dz \, dS_y d\sigma \, dS_{\xi}d\tau \nonumber \\
&\phantom{=} + \int_{\mathbb{X}} \Gamma(x,t; z,0)f_0(z)\,dz \nonumber \\
&= \int_{\mathbb{X}}  f_0(z) \int_0^t\int_{\partial\mathbb{X}} \underbrace{\frac{\partial\Gamma(y,\sigma; z,0)}{\partial\nu(y,\sigma)} \int_0^t\int_{\partial\mathbb{X}} \Psi(\xi,\tau; y,\sigma) \Gamma(x,t;\xi,\tau) \, dS_{\xi}d\tau }_{=: \Theta(y,\sigma; x,z,t)}\, dS_y d\sigma \, dz \label{eq:full_kernel_representation1}\\
&\phantom{=} + \int_{\mathbb{X}} \Gamma(x,t; z,0)f_0(z)\,dz. \label{eq:full_kernel_representation2}
\end{align}
Now, the bounds~\eqref{eq:GammaBound} and~\eqref{eq:PsiBound} with Lemma~\ref{lem:Fri08_Lem1.4.3_analogue} gives a bound for the innermost integral (the one with respect to $\xi,\tau$), i.e.,
\[
\int_0^t\int_{\partial\mathbb{X}} \left| \Psi(\xi,\tau; y,\sigma) \Gamma(x,t;\xi,\tau) \right| \, dS_{\xi}d\tau \le \frac{\const}{(t-\sigma)^{(d - \beta)/2}}\, \exp\left[ -\frac{\lambda_0^* | x-y |^2}{4(t-\sigma)} \right].
\]
With this,~\eqref{eq:GammaNuBound}, and Lemma~\ref{lem:Fri08_Lem1.4.3_analogue} again we obtain for the integral in~\eqref{eq:full_kernel_representation1} with respect to~$y,\sigma$ that
\begin{align*}
\int_0^t\int_{\partial\mathbb{X}} \big| \Theta(y,\sigma; x,z,t) \big| dS_y d\sigma \le \frac{\const}{t^{\mu^*}} \exp\left[ -\frac{\lambda_0^* | x-z |^2}{4 t} \right]
\end{align*}
for some~$\mu^* > 1/2$ arbitrary. With this bound and~\eqref{eq:GammaBound}, writing~\eqref{eq:full_kernel_representation1} and~\eqref{eq:full_kernel_representation2} in the form~$u(x,t) = \int_{\mathbb{X}} \Gamma_N(x,t; z, 0) f_0(z)\,dz$, we arrive at the desired bound
\begin{equation}
 \Gamma_N(x,t; \xi, 0) \le \frac{\const}{t^{d/2}} \exp\left[ -\frac{\lambda_0^* | x-\xi |^2}{4 t} \right], \nonumber
\end{equation}
which implies a uniform bound in~$x$ for any fixed~$t>0$.
\end{proof}

\begin{small}
\bibliographystyle{myalpha} 
\bibliography{bibliofile}
\end{small}
\end{document}